\DeclareMathOperator{\sgn}{sgn}
\DeclareMathOperator{\Diag}{Diag}
\newcommand{\Thr}{\mathbb{T}}
\definecolor{dkgreen}{rgb}{0,0.6,0}
\definecolor{gray}{rgb}{0.5,0.5,0.5}
\date{}
\newtheorem{theorem}{Theorem}[section]
\newtheorem{lemma}[theorem]{Lemma}
\newtheorem{proposition}[theorem]{Proposition}
\newenvironment{proof}[1][Proof]{\begin{trivlist}\item[\hskip \labelsep {\bfseries #1.}]}{$\Box$\end{trivlist}}
\newcommand{\range}{\operatorname{range}}
\newcommand{\M}{Y}
\numberwithin{equation}{section}
\title{Compression Approaches for the Regularized Solutions of Linear Systems from Large-Scale 
Inverse Problems}
\author[1]{Sergey Voronin}
\author[2]{Dylan Mikesell}
\author[3]{Guust Nolet}
\affil[1]{Department of Applied Mathematics, University of Colorado, Boulder, CO 80309, USA}
\affil[2]{Department of Earth, Atmospheric and Planetary Sciences, Massachusetts Institute of Technology, Cambridge, MA 02139, USA}
\affil[3]{G\'{e}oazur, Universit\'{e} de Nice, 06560 Sophia Antipolis, France}
\date{\today}
\begin{document}
\bibliographystyle{plain}

\lstset{language=Matlab,
   keywords={break,case,catch,continue,else,elseif,end,for,function,
      global,if,otherwise,persistent,return,switch,try,while},
   basicstyle=\ttfamily,
   keywordstyle=\color{blue},
   commentstyle=\color{red},
   stringstyle=\color{dkgreen},
   numbers=left,
   numberstyle=\tiny\color{gray},
   stepnumber=1,
   numbersep=10pt,
   backgroundcolor=\color{white},
   tabsize=4,
   showspaces=false,
   showstringspaces=false}

%\abstract{This is the abstract.}
\maketitle
\begin{abstract}
We introduce and compare new compression approaches to obtain regularized solutions of  
large linear systems which are commonly encountered in large scale inverse problems. 
We first describe how to approximate matrix vector operations with a large matrix through a 
sparser matrix with fewer nonzero elements, by borrowing from ideas used in wavelet image compression. 
Next, we describe and 
compare approaches based on the use of the low rank SVD, which can result in 
further size reductions. We describe how to obtain the approximate low rank SVD 
of the original matrix using the sparser wavelet compressed matrix. 
Some analytical results concerning the various methods are presented and the results of the proposed techniques 
are illustrated using both synthetic data and a very large linear system from a seismic 
tomography application, where we obtain significant compression 
gains with our methods, while still resolving the main features of the solutions.
%which allows us 
%to use more data with the available hardware and resolve more detailed features of the Earth's 
%interior structure. 
\end{abstract}

\section{Introduction}
\label{sec:Introduction}
This paper describes practical approaches to obtain approximate but accurate regularized 
solutions to large linear systems arising from large scale inverse problems, without the need to load 
into memory the often very large original matrix used in the corresponding 
optimization problems.
Typically, such as in the case of the seismic tomography application which we mention here for illustration  
\cite{Simons.Loris.ea2011} 
(involving the reconstruction of seismic wave velocities in the Earth's interior with respect 
to a given spherically symmetric model), the physics calls for a solution of a  
linear system $Ax = \bar{b}$ with matrix $A \in \mathbb{R}^{m \times n}$ (often with $m \neq n$).
In practice, instead of the true right hand side $\bar{b}$, 
we are given the noisy right hand side $b = \bar{b} + \nu$, 
with $\nu$ being an unknown noise vector. 
The matrix $A$ can be very large 
and is likely to be ill-conditioned and exhibit fast nonlinear decay of singular values \cite{nolet08}.  
In order to obtain a solution given matrix $A$ and right hand side $b$, one often uses a 
derivative of Tikhonov regularization involving a regularization parameter 
$\lambda>0$ \cite{Tikhonov63}. In its classical form, this is simply the minimization problem:
\begin{equation}
\label{eq:tikhonov_min_eq}
\bar{x} = \arg\min_x \left\{ ||Ax - b||_2^2 + \lambda ||x||_2^2 \right\},
\end{equation}
which replaces the constrained system $Ax = b$ by the $\ell_2$ minimization of the model 
residual norm $\|Ax - b\|_2$, with a constraint on the $\ell_2$ norm of the model, 
controlled by the parameter $\lambda$. For large $\lambda$, $\bar{x}$ tends to be close to zero.
Regularization is necessary to counter the effects of ill-conditioning: the presence of small 
singular values in the matrix, which if left unaccounted for, blows up the norm of the solution 
and makes it very sensitive to data errors \cite{Calvetti2000423}. 
The latter part of this property is worth repeating as it is central to the ideas in this paper: 
small errors in the operator $A$ and the right hand side $b$ do not induce big 
changes in the regularized solution. The regularization in \eqref{eq:tikhonov_min_eq}  
is referred to as $\ell_2$ regularization, because it involves the minimization of the $\ell_2$ 
model norm. Other types of regularization are possible: for example, sparsity constrained 
regularization is also frequently used, including in geophysical applications 
\cite{Charlety2013}. In this paper, 
we discuss the application of our methods to $\ell_2$ regularization, 
as it is the most commonly used regularization.
However, the techniques apply also to other types of regularization and 
optimization techniques. The quadratic functional in \eqref{eq:tikhonov_min_eq} can be differentiated 
to yield the linear system for the regularized solution:
\begin{equation}
\label{eq:tikhonov_min_eq_system}
(A^T A + \lambda I) \bar{x} = A^T b.
\end{equation}
If the matrix $A$ is not too large, then there is no problem in solving this linear system with 
an iterative algorithm. 
A conjugate gradient or the LSQR algorithm \cite{Paige:1982:LAS:355984.355989} 
can be efficiently used for this purpose. Typically, 
we may wish to incorporate additional terms into the regularization, 
such as Laplacian smoothing \cite{nolet08}. 
In that case we solve instead:
\begin{equation}
\bar{x} = \arg\min_x \left\{ ||Ax - b||_2^2 + \lambda_1 ||x||_2^2 + \lambda_2 ||L x||_2^2 \right\},
\end{equation}
which can be solved through the linear system:
\begin{equation}
(A^T A + \lambda_1 I + \lambda_2 L^T L) \bar{x} = A^T b,
\end{equation}
or through the augmented least squares problem and its corresponding normal equations:
\begin{equation*}
\bar{x} = \arg\min_x \left\|
\begin{bmatrix}
A \\
\sqrt{\lambda_1} I \\
\sqrt{\lambda_2} L \\
\end{bmatrix} x - 
\begin{bmatrix}
b \\
0 \\
0 \\
\end{bmatrix}
\right\|_2^2 \implies 
\begin{bmatrix}
A \\
\sqrt{\lambda_1} I \\
\sqrt{\lambda_2} L \\
\end{bmatrix}^T
\begin{bmatrix}
A \\
\sqrt{\lambda_1} I \\
\sqrt{\lambda_2} L \\
\end{bmatrix}
\bar{x} = 
\begin{bmatrix}
A \\
\sqrt{\lambda_1} I \\
\sqrt{\lambda_2} L \\
\end{bmatrix}^T
\begin{bmatrix}
b \\
0 \\
0 \\
\end{bmatrix}.
\end{equation*}
As long as $A$ and $L$ can be applied to vectors, the solution can be obtained by a number 
of iterative algorithms. The problem occurs when $A$ is too large to load into memory. 
In the seismic tomography application we refer to 
\cite{debayle2004, Simons.Loris.ea2011, vanheijst99}, the matrix 
is several terabytes in size, so it may not be possible to load into memory in full, 
even on relatively large memory computer clusters. Thus, we must find ways to condense the matrix 
size using acceptable approximations which do not significantly alter the final regularized 
solutions. 

Many attempts at approximating matrices have been documented 
\cite{MarkovskyLowRank,ImprovingCURMatrixDecomps}. 
However, few attempts have been made to apply the approximations to regularization. 
One of the main papers which precedes ours is \cite{Lampe20122845}, where Krylov subspace 
approximations for Tikhonov regularization are discussed. In this paper, 
we discuss two different techniques: wavelet 
based approximations and low rank SVD (singular value decomposition). Our SVD techniques are 
especially effective when the matrix exhibits fast nonlinear decay of singular values. 
From our experiments, Krylov subspace dimensionality reduction techniques, 
while interesting and promising, 
tend to do worse when the decay of singular values of the matrix is fast. 
This is in contrast to the techniques 
we describe, which in such cases, do not significantly degrade the solution quality  
and lower the hardware requirements to obtain a solution. Even if $A$ is small enough 
that it can be loaded into memory, there may still be interest in the techniques we 
describe for gains of speed or to be able to solve several problems at once on one machine. 

\section{Organization of the Paper}
\label{sec:Organization}
We now briefly describe the organization of this paper. We assume that the reader is interested 
in obtaining regularized solutions to a system $Ax = b$, where $A \in \mathbb{R}^{m \times n}$ 
is as previously described: 
very large (perhaps more than a TB), with rapidly decaying singular values, 
and stored on the disk. In Section \ref{sec:Notation}, we describe notation and 
preliminary concepts including 
the various norms we use, the singular value decomposition, and a few lemmas that we 
use for our later derivations. In Section 
\ref{sec:WaveletCompression}, we describe  
how to do approximate matrix-vector operations with the matrix $A$, 
using a smaller matrix $M$ derived from $A$, via a wavelet thresholding based algorithm. The 
matrix $M$ is obtained from $A$ entirely on the disk. The big $A$ matrix is never required to be 
loaded into RAM. We assume that on output of this procedure, the matrix $M$, which is still 
large, but significantly smaller than $A$ (in memory size), can be loaded into RAM at least 
for a limited number of operations.
%For example, it may be the case that a computer cluster 
%is available to the user for a limited amount of time, where $M$ can be loaded and used. 
After $M$ is obtained, two options are available to the user: the regularization can be 
performed directly via $M$, or greater compression may be sought. In many cases, we assume 
that the latter will be true: the user would like to obtain a matrix small enough to use on 
their local machine. In Section \ref{sec:SVDCompression}, we describe how to 
compute and use the low rank SVD, which is known to provide an optimal 
(in terms of error in the Frobenius and spectral norms) 
rank $k$ approximation of the matrix. We mention how to compute such an approximation 
with a randomized algorithm, which uses a limited number of matrix vector operations with 
$M$ (or with $A$, if that is feasible). We introduce several different strategies 
which can be used. We show that several strategies are mathematically equivalent, but one may be 
preferred over others depending on the setup of the problem. 
Both in Section \ref{sec:WaveletCompression} 
and Section \ref{sec:SVDCompression}, we mention 
block matrix techniques, which are very useful for very large problems, where operating 
with the full matrices $A$ or $M$ is not possible. The outlined strategies make feasible 
to compute approximate regularized solutions to the original $Ax = b$ system, using matrices 
many times smaller than $A$, either with fewer nonzeros, in the case of the wavelet compressed $M$, or with much smaller dimensions, 
in the case of the low rank SVD. 
For some approaches, the matrices may be small enough 
to load on modern laptop computers, even if the original $A$ was more than a TB in size. 
In Section \ref{sec:Numerics}, we present numerical experiments to illustrate the techniques 
for the compression approaches outlined in Sections \ref{sec:WaveletCompression} 
and \ref{sec:SVDCompression}. 
We present results for both synthetic data, exhibiting different rates of 
decay of singular values and different wavelet compressibility characteristics, 
and for real data from a large scale seismic tomography application.

\section{Notation and Preliminaries}
\label{sec:Notation}

We refer to $x \in \mathbb{R}^n$ and $A \in \mathbb{R}^{m \times n}$, as respectively, 
a real valued vector of $n$ elements and a real 
valued matrix of $m$ rows and $n$ columns. Most of the techniques we describe 
apply to complex valued matrices also. For vectors, we define 
the vector norm as the usual Euclidean norm:
\begin{equation*}
\|x\|_2 = \left( \displaystyle\sum_{i=1}^n x_i^2 \right)^{\frac{1}{2}},
\end{equation*}
and we use the notation $\|x\|$ to mean $\|x\|_2$. For matrices, we define the spectral norm as:
\begin{equation*}
\|A\|_2 = \sigma_{\max}(A)
\end{equation*}
where $\sigma_{\max}(A)$ denotes the largest singular value of 
matrix $A$. The Frobenius norm is defined as:
\begin{equation*}
\|A\|_{F} = \left( \displaystyle\sum_{i,j} A_{i,j}^2 \right)^{\frac{1}{2}}.
\end{equation*}
By $A^{-1}$ we denote the inverse matrix, which is applicable only for square dimensions 
(i.e. $m = n$). The following result, which can be directly verified by means of block matrix inversion, 
is known as the Woodbury inverse formula \cite{Woodbury50} 
and will be useful in our analysis in Section \ref{sec:SVDCompression}: 
\begin{lemma}
Take $D \in \mathbb{R}^{n \times n}$, $P \in \mathbb{R}^{n \times k}$, $T \in \mathbb{R}^{k \times k}$, and 
$R \in \mathbb{R}^{k \times n}$. Assume that $D$ and $T$ are invertible. 
Then $D+PTR$ is invertible if and only if $T^{-1}+RD^{-1}P$ is, and the following identity holds:
\begin{equation}
\label{eq:woodbury_inverse_formula}
(D + P T R)^{-1} = D^{-1} - D^{-1} P \left( T^{-1} + R D^{-1} P \right)^{-1} R D^{-1}.
\end{equation}
\end{lemma}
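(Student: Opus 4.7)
The plan is to prove the Woodbury formula by direct algebraic verification: once we have a candidate inverse, checking the identity is a matter of expanding the product $(D+PTR)\bigl[D^{-1} - D^{-1}P(T^{-1}+RD^{-1}P)^{-1}RD^{-1}\bigr]$ and showing it collapses to $I$. The bulk of the proof is therefore the iff statement on invertibility, which has to be handled before the formula even makes sense.

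For the invertibility equivalence, the cleanest route is a block-matrix argument. I would introduce the $(n+k)\times(n+k)$ matrix
\begin{equation*}
M = \begin{bmatrix} D & P \\ -R & T^{-1} \end{bmatrix}
\end{equation*}
and exhibit two block-triangular factorizations of $M$: one block-lower$\times$block-upper factorization peels off the Schur complement $T^{-1}+RD^{-1}P$ from the $(2,2)$ block, while a dual factorization peels off $D+PTR$ from the $(1,1)$ block. Comparing determinants in the two factorizations gives
\begin{equation*}
\det(D)\,\det(T^{-1}+RD^{-1}P) \;=\; \det(D+PTR)\,\det(T^{-1}).
\end{equation*}
Since $D$ and $T$ are assumed invertible, their determinants are nonzero, so this identity forces $D+PTR$ to be invertible if and only if $T^{-1}+RD^{-1}P$ is.

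Granted the invertibility, the identity itself is then routine: I would expand
\begin{equation*}
(D+PTR)\Bigl[D^{-1} - D^{-1}P(T^{-1}+RD^{-1}P)^{-1}RD^{-1}\Bigr],
\end{equation*}
group terms so that the factor multiplying $(T^{-1}+RD^{-1}P)^{-1}RD^{-1}$ becomes $P(I+TRD^{-1}P) = PT(T^{-1}+RD^{-1}P)$, and observe that $T^{-1}+RD^{-1}P$ cancels against its inverse, leaving $I + PTRD^{-1} - PTRD^{-1} = I$.

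The main obstacle, such as it is, is purely organizational: the expansion of the verification has four terms and one must pull a factor of $T$ out in order to see the cancellation clearly. The iff direction is essentially bookkeeping once the block factorization is written down, and no new ideas beyond those already used elsewhere in the paper (Schur complements, determinant multiplicativity) are required.
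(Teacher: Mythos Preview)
Your proof is correct, but it follows a different route from the paper's. The paper sets up the block linear system
\[
\begin{pmatrix} D & P \\ R & -T^{-1} \end{pmatrix}
\begin{pmatrix} X \\ Y \end{pmatrix}
=
\begin{pmatrix} I \\ 0 \end{pmatrix}
\]
and performs two successive substitutions between the block rows to obtain two expressions for $X$: one reads $X=(D+PTR)^{-1}$, the other reads $X = D^{-1} - D^{-1}P(T^{-1}+RD^{-1}P)^{-1}RD^{-1}$, and equating them gives the identity. The paper's argument is thus \emph{derivational} --- it discovers the formula via block elimination --- but it does not explicitly address the iff statement on invertibility. Your approach instead handles the iff cleanly via the determinant identity coming from the two Schur-complement factorizations of (essentially) the same block matrix, and then \emph{verifies} the inverse formula by direct multiplication rather than deriving it. What each buys: the paper's argument explains where the formula comes from; yours is more complete on the invertibility equivalence and the verification step is entirely mechanical. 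Both are standard and equally valid.
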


Every matrix $A$ admits a 
\emph{singular value decomposition} (SVD) \cite{trefethen97} of the form
\begin{equation}
\label{eq:svdofA}
\begin{array}{ccccccccccc}
A &=& U & \Sigma & V^{T},\\
m\times n && m\times p & p\times p & p\times n
\end{array}
%\sum_{i=1}^{\min(m,n)} \sigma_i u_i v_i^T = \displaystyle\sum_{i=1}^{r} \sigma_i u_i v_i^T =  U \Sigma V^T \quad \mbox{;} \quad \Sigma = %\mbox{diag}(\sigma_1,\dots,\sigma_{r},0,\dots,0)
%\in\mathbb{R}^{m\times n},
\end{equation}
where $p = \min(m,n)$ and $U$ and $V$ are orthonormal matrices and $\Sigma$ is a diagonal matrix.
The columns $(u_{j})_{j=1}^{p}$ and $(v_{j})_{j=1}^{p}$ of $U$ and $V$ are called
the left and right singular vectors of $A$, respectively, and the diagonal entries
$(\sigma_{j})_{j=1}^{p}$ of $\Sigma$ are the singular values of $A$. The singular
values of $A$ are ordered so that $\sigma_{1} \geq \sigma_{2} \geq \cdots \geq \sigma_{p} \geq 0$.
$U$ and $V$ have orthonormal columns ($U^T U =  V^T V = I_p$). 
\[
U = \bigl[u_{1}\ u_{2}\ \cdots\ u_{p}\bigr],
\qquad
V = \bigl[v_{1}\ v_{2}\ \cdots\ v_{p}\bigr],
\qquad\mbox{and}\qquad
\Sigma = \left[\begin{array}{cccc}
\sigma_{1} & 0 & 0 & \cdots \\
0 & \sigma_{2} & 0 & \cdots \\
0 & 0 & \sigma_{3} & \cdots \\
\vdots & \vdots & \vdots & \ddots 
\end{array}\right],
\]
so that 
\begin{equation*}
A = \sum_{j=1}^{p}\sigma_{j}\,u_{j}\,v_{j}^{T}.
\end{equation*}
In finite precision, the numerical rank of the matrix will be $r$ and it is possible 
(in fact, likely for a large matrix) that $r < p$. That is, $\sigma_{j}$ appears as $0$ 
to the machine for $j\geq r$. Thus, in such scenario we write:
\begin{equation*}
A = \sum_{j=1}^{r}\sigma_{j}\,u_{j}\,v_{j}^{T}.
\end{equation*}
where the precise value of $r$ is typically unknown. It is always the case that $r \leq p$.

For a matrix which is not well conditioned and has fast decay of singular values, 
many nonzero singular values $\sigma_j$ for $j<r$ 
will be very small relative to the largest singular value $\sigma_1$ and the
drop off in value starting from $\sigma_1$ will be rapid and nonlinear.
In these cases, the low rank SVD approximation $A_k$ provides a good approximation to the 
matrix for relatively small $k$ relative to $p$. We define $A_k$ 
by taking into account only the first $k < p$ singular values and vectors:
that is, with $U_k\in\mathbb{R}^{m \times k}$ consisting of the first $k$ columns of $U$, 
$\Sigma_k=\Diag(\sigma_1,\ldots,\sigma_k)\in\mathbb{R}^{k \times k}$ consisting of $k$ rows 
and columns of $\Sigma$, and
$V_k\in\mathbb{R}^{n \times k}$ consisting of the first $k$ columns of $V$:
\begin{equation}
\label{eq:svdofAtrunc}
A_{k} = \sum_{j=1}^{k}\sigma_{j}\,u_{j}\,v_{j}^{T} = U_{k}\,\Sigma_{k}\,V_{k}^{T},
\end{equation}
$$
U_{k} = \bigl[u_{1}\ u_{2}\ \cdots\ u_{k}\bigr],
\qquad
V_{k} = \bigl[v_{1}\ v_{2}\ \cdots\ v_{k}\bigr],
\qquad\mbox{and}\qquad
\Sigma_k = \left[\begin{array}{ccccc}
\sigma_{1} & 0 & 0 & \cdots & 0 \\
0 & \sigma_{2} & 0 & \cdots & 0 \\
0 & 0 & \sigma_{3} & \cdots & 0 \\
\vdots & \vdots & \vdots && 0 \\
0 & 0 & 0 & \cdots & \sigma_{k}
\end{array}\right].
$$
By the Eckart-Young theorem \cite[Theorem 5.8]{trefethen97}, it is known that $A_k$ is the optimal rank $k$ approximation 
to $A$ in both the spectral and Frobenius norms and that:
\[
\|A - A_{k}\|_2 = \sigma_{k+1},
\]
when the error is measured in the $\ell^{2}$ operator norm, and
$$
||A - A_{k}||_F = \left(\sum_{j=k+1}^{p}\sigma_{j}^{2}\right)^{1/2}
$$
in the Frobenius norm. When $k \ll p$, the matrices $U_k$, $\Sigma_k$, and $V_k$ 
are significantly smaller than the corresponding full SVD matrices $U$, $\Sigma$, and $V$. 
The choice of $k$ is up to the user, 
but greater $k$ requires greater computation time
and storage requirements. Notice that $A$ and $A_k$ are related via the expansion: 
\begin{equation*}
A = \displaystyle\sum_{i=1}^{k} \sigma_i u_i v_i^T + \displaystyle\sum_{i=k+1}^{r} \sigma_i u_i v_i^T 
\end{equation*}
where the first sum on the right corresponds to $A_k$ and the second sum corresponds 
to $\hat{A_k}$, consisting of the remaining singular vectors 
(in matrices $\hat{U}_k$, $\hat{V}_k$) which are not used in the truncated SVD expansion. These 
remaining singular vectors are orthogonal to the vectors in matrices $U_k$ and $V_k$ which go into 
the construction of $A_k$. We have the following relations for $k<r$:
\newpage
 
\begin{eqnarray*}
U &=& [U_k, \hat{U}_k] \quad \mbox{;} \quad V = [V_k, \hat{V}_k] ;
\\
A &=& \displaystyle\sum_{i=1}^{k} \sigma_i u_i v_i^T + \displaystyle\sum_{i=k+1}^{r} \sigma_i u_i v_i^T = U_k \Sigma_k V^T_k + \hat{U}_k \hat{\Sigma}_k \hat{V}_k^T = A_k + \hat{U}_k \hat{\Sigma}_k \hat{V}_k^T = A_k + \hat{A_k},
\\
A^T &=& \displaystyle\sum_{i=1}^{k} \sigma_i v_i u_i^T + \displaystyle\sum_{i=k+1}^{r} \sigma_i v_i u_i^T = V_k \Sigma_k U^T_k + \hat{V}_k \hat{\Sigma}_k \hat{U}_k^T = A_k^T + \hat{V}_k \hat{\Sigma}_k \hat{U}_k^T = A_k^T + \hat{A_k}^T,
\\
A^T A &=& \displaystyle\sum_{i=1}^{k} \sigma_i^2 v_i v_i^T + \displaystyle\sum_{i=k+1}^{r} \sigma_i^2 v_i v_i^T = V_k \Sigma^2_k V^T_k + \hat{V}_k \hat{\Sigma}^2_k \hat{V}^T_k = A^T_k A_k + \hat{V}_k \hat{\Sigma}^2_k \hat{V}^T_k = A^T_k A_k + \hat{A_k}^T \hat{A_k},
\end{eqnarray*}
where 
\begin{equation*}
A_k =  \displaystyle\sum_{i=1}^{k} \sigma_i u_i v_i^T = U_k \Sigma_k V^T_k \quad \mbox{and} \quad A^T_k A_k =  \displaystyle\sum_{i=1}^{k} \sigma_i^2 v_i v_i^T = V_k \Sigma^2_k V^T_k,
\end{equation*}
and $U_k^T \hat{U}_k = V_k^T \hat{V}_k = 0$ and $U_k^T U_k = \hat{U}_k^T \hat{U}_k = V_k^T V_k = \hat{V}_k^T \hat{V}_k = I$. Additionally, we have the following properties which we will exploit in 
Section \ref{sec:SVDCompression}:
\begin{lemma}
For vectors $v \in \mathbb{R}^k$ and $w \in \mathbb{R}^m$, $||U_k v||_2 = ||v||_2$ and $||U_k^T w||_2 \leq ||w||_2$. The same also holds for vectors $\bar{v} \in \mathbb{R}^k$ and $\bar{w} \in \mathbb{R}^n$ and matrices $V_k$ and $V_k^T$.
\end{lemma}
\begin{proof}
Note that 
\begin{equation*}
U U^T = I = [U_k, \hat{U}_k] \begin{bmatrix} U_k^T \\ \hat{U}_k^T \end{bmatrix} = U_k U_k^T + \hat{U}_k \hat{U}_k^T \implies U_k U_k^T = I - \hat{U}_k \hat{U}_k^T.
\end{equation*}
Thus:
\begin{eqnarray*}
&& ||U_k v||_2^2 = \langle U_k v, U_k v \rangle = \langle v, U_k^T U_k v \rangle = \langle v, v \rangle = ||v||_2^2,
 \\
&& ||U_k^T w||_2^2 = \langle U_k^T w, U_k^T w \rangle = \langle w, U_k U_k^T w \rangle = 
\langle w, (I - \hat{U}_k \hat{U}_k^T ) w \rangle = \langle w, w \rangle - \langle w, \hat{U}_k \hat{U}_k^T w \rangle \leq ||w||_2^2.
\end{eqnarray*}
The computations with $V_k$ and $V_k^T$ take similar form.
\end{proof}

\section{Approximate Matrix-Vector Operations with Wavelet Compression}
\label{sec:WaveletCompression}
Most iterative algorithms applicable to our discussion can be successfully implemented if we can perform the two key operations with the matrix $A$:
\begin{equation}
\label{eq:mat_vec_ops}
A x \quad \mbox{and} \quad A^T y,
\end{equation} 
where $A \in \mathbb{R}^{m \times n}$, $x \in \mathbb{R}^n$ and $y \in \mathbb{R}^m$. We now  
discuss a technique to perform these operations approximately, using a smaller matrix derived 
from $A$ by means of wavelet compression \cite{DaubechiesWaveletsI,har-etal:wavelets}. 
Wavelets provide a multi-resolution approach to signal analysis, capturing  
the fine and coarse scale parts of a signal, and wavelet 
transforms can be performed efficiently \cite{Akansu:1992:MSD:573878,swe:spie95}. 
In our application, the matrix rows have features which are well represented by wavelets.
To motivate this approach, consider wavelet compression applied to a geophysical model 
(or any typical vectorized image). We compare the original model $x$ 
(in row vector form) to the inverse transform of the thresholded wavelet transformed 
model based on the relation:
\begin{equation}
\label{eq:wavelet_approx_for_x}
x \approx \left(W^{-1} \left( \Thr(W x^T) \right)\right)^T,
\end{equation}
where $W$ and $W^{-1}$ represent the forward and inverse wavelet 
transforms \cite{YvesMeyerWaveletsandAlgs} and the thresholding operation $\Thr(\cdot)$ 
retains a certain percentage of the largest coefficients (by absolute value) of its input vector. 
The transpose operations assure that we are applying the transforms to column vectors, in view 
of their representation as matrices $W$ and $W^{-1}$.
Relation \eqref{eq:wavelet_approx_for_x} holds when the row vector $x$ is wavelet compressible. 
This is not necessarily the case for arbitrary $x$, yet does hold in many situations. 
For example, in the case of the 
application we allude to in this paper, the vectors are geophysical kernels representing a 
sensitivity of the observable (usually a phase or a delay) with respect to the intrinsic 
velocity as a function of space \cite{GJI:GJI426}. 
These kernels arise from integral equations and are generally smooth, 
and have been observed by us to be compressible by imposing a threshold on the wavelet coefficients.
Many different kinds of thresholding functions exist. For our purposes, we simply use the hard 
thresholding function:
\begin{equation}
\label{eq:hard_thresholding}
   H_\alpha(x) = 
   \begin{cases}
      x &\mbox{    if    } |x|>\alpha,
   \\ %\quad \mbox{and} \quad 
      0 &\mbox{    if    } |x|\leq\alpha. 
   \end{cases}
\end{equation}
With the right choice of wavelet transform, only a small fraction 
of the coefficients in the wavelet transformed representation $W x$ need to be retained for a good 
reconstruction. That is, the threshold $\alpha$ can be taken to be quite large relative 
to the magnitudes of the elements of the vector $W x$. 
In Figure \ref{fig:fig_wavelet_compression}, below, a smooth CDF $9-7$ transform was used 
\cite{CDFWavelets}. We compare the original row vectorized image $x$ to the reconstructed image 
$\left(W^{-1} \Thr(W x^T)\right)^T$ using a $2D$ CDF $9-7$ transform over the image.
We observe that as the amount of retained nonzero wavelet coefficients decreases, 
the reconstruction quality worsens, but the main features of the image are still retained. 
In the rightmost plot of Figure \ref{fig:fig_wavelet_compression}, we 
define $E = 100 \frac{\| x - \left(W^{-1} \Thr(W x^T)\right)^T \|}{\|x\|}$ as the percent 
error and $N = 100 \frac{nnz(\Thr(W x^T))}{nnz(W x^T)}$ as the percent coefficients retained. 
Clearly, the reconstruction error can be controlled by keeping a certain (typically small) number of 
nonzero coefficients. Notice also that at about $7\%$ coefficients retained, we have a 
substantial $30\%$ error $E$. Yet, the image looks quite recognizable to the eye, with a bit 
of smoothing compared to the original.
\begin{figure*}[!ht]
%\centerline{
%\includegraphics[scale=0.30]{images_wavelet_sparsity_vecx_cubesphere32_original-eps-converted-to.pdf}
%\quad
%\includegraphics[scale=0.285]{images_wavelet_sparsity_mult_gb_unthresholded-eps-converted-to.pdf}
%\includegraphics[scale=0.180]{images_wavelet_sparsity_mult_gb_thresholded-eps-converted-to.pdf}
%}
\centerline{
\includegraphics[scale=0.18]{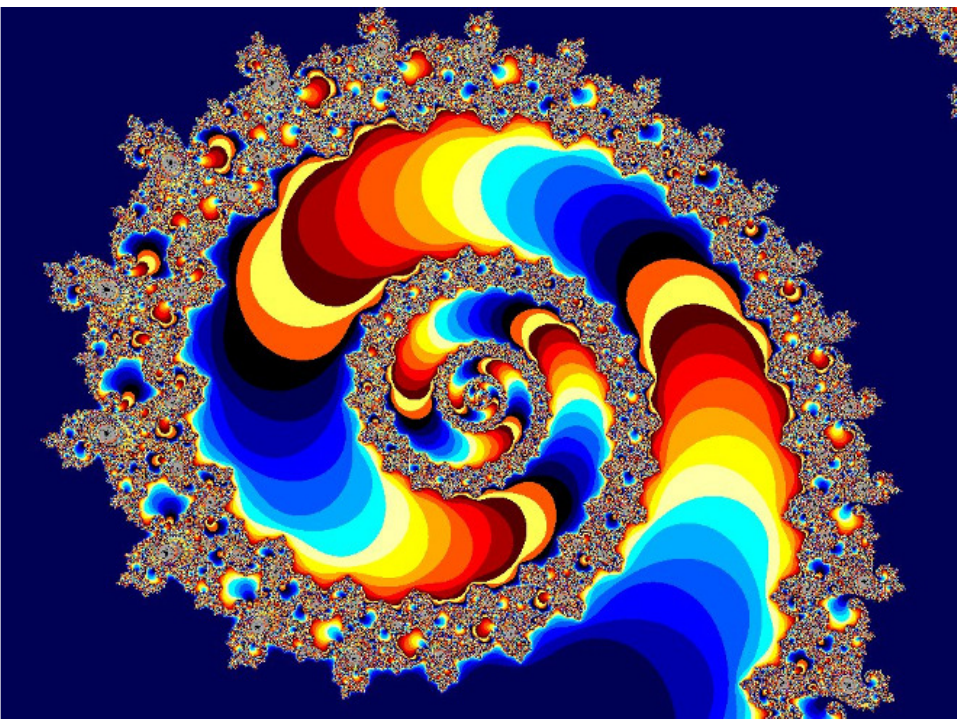}
\quad
\includegraphics[scale=0.18]{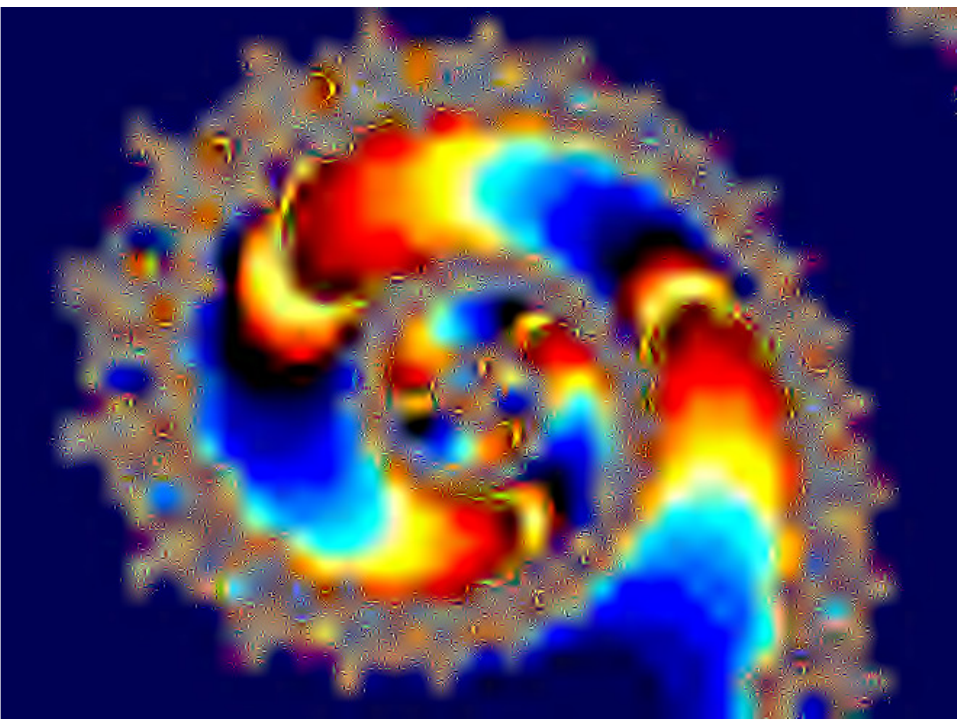}
\includegraphics[scale=0.18]{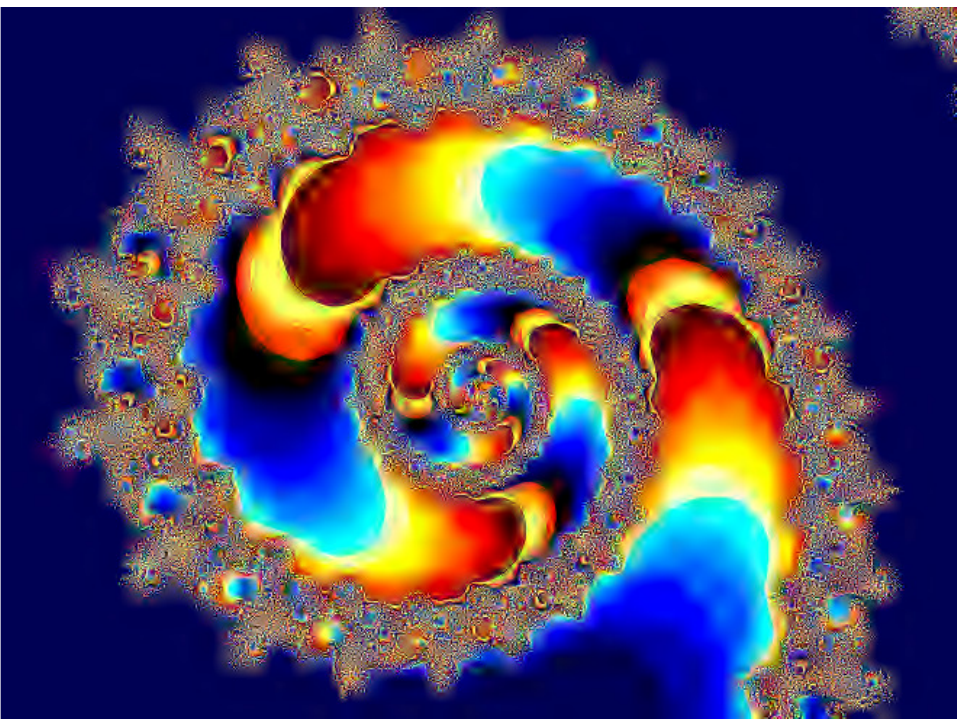}
\includegraphics[scale=0.16]{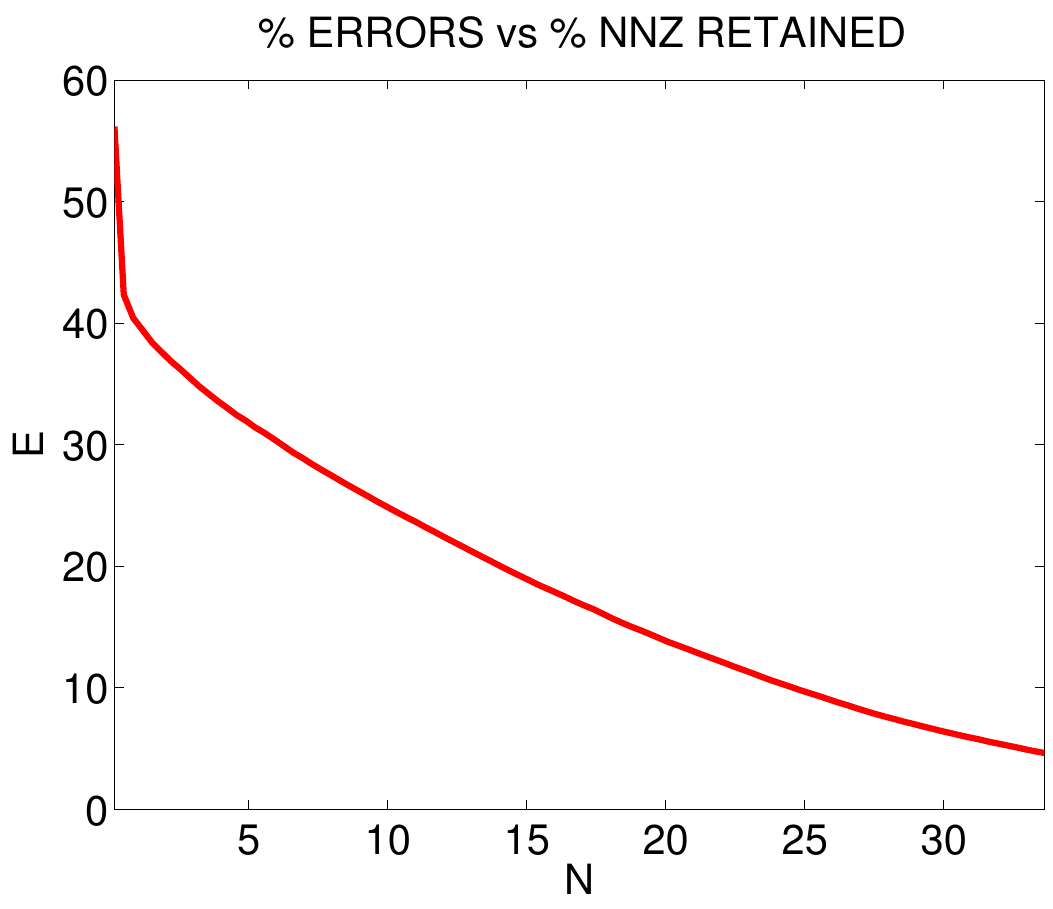}
}
\caption{A fractal image $x$ (left) and reconstructions 
$\left(W^{-1} \left( \Thr(W x^T) \right)\right)^T$ with $1.4\%$ and $6.8\%$ 
of retained wavelet coefficients. Plot of percent error norm vs percent nonzeros retained.}
\label{fig:fig_wavelet_compression}
\end{figure*}

Assuming the rows of our matrix $A$ are wavelet compressible (that is for some relatively 
small threshold, satisfy approximately the relation \eqref{eq:wavelet_approx_for_x}), 
we would like to apply the 
same principle to approximate matrix vector operations \eqref{eq:mat_vec_ops} with the 
big original matrix $A$ through a smaller matrix $M$ so that only the smaller matrix $M$ needs to be 
loaded into memory. The matrix $M$ will have the same dimensions as $A$ but fewer nonzeros, so 
it takes less space on disk and in memory. One forms this matrix by transforming and thresholding 
the individual rows of $A$, an operation which can be done entirely on the disk, without loading 
any parts of $A$ into RAM. The transform $W$ used for each row can vary from application to 
application, depending on the structure of the rows of $A$. In our seismic 
tomography application for which we give examples in Section \ref{sec:Numerics}, we simply used the $1D$ 
CDF $9-7$ transform for each row disregarding their inherent multi-dimensional structure. We 
believe that even better results can be obtained by tailoring $W$ to the structure of the matrix data.  

Each row of $M$ is obtained by applying the wavelet transform and thresholding to the 
corresponding row of $A$:
\begin{equation*}
A = \begin{bmatrix}
r_1 \\
r_2 \\
\vdots \\
r_m \\
\end{bmatrix} 
\ \rightarrow\  
M = 
\begin{bmatrix}
\Thr( W r_1^T )^T \\
\Thr( W r_2^T )^T \\
\vdots \\
\Thr( W r_m^T )^T \\
\end{bmatrix} = \Thr( A W^T ) \approx A W^T 
\end{equation*}
We can then approximate the operations \eqref{eq:mat_vec_ops}. Using the relations:
\begin{equation*}
Mx \approx A W^T x \quad \mbox{and} \quad M^T y \approx (A W^T)^T y = W A^T y, 
\end{equation*}
we obtain the approximation formulas:
\begin{equation}
\label{eq:wavelet_approx_matvec_ops}
Ax \approx M W^{-T} x \quad \mbox{and} \quad A^T y \approx W^{-1} M^T y.
\end{equation}
This means that the operations \eqref{eq:mat_vec_ops} can be performed approximately 
via \eqref{eq:wavelet_approx_matvec_ops}, using the smaller matrix $M$ and the inverse 
and inverse-transpose wavelet transforms. In practice, 
only $M$ needs to be loaded in memory as the wavelet transforms would be implemented as 
routines. The inverse-transpose transform 
is equivalent to the forward transform when $W$ is orthogonal and $W^{-1} = W^T$. For the non-orthogonal 
case, such as for example the CDF $9-7$ transform, the inverse-transpose transform can be  
approximated by applying the forward transform with the inverse filters. 
The success of this approximation method 
depends on the size ratio between $M$ and $A$ and the percent error in the approximate operations. 
This depends on the data, the transform that is used, and the threshold used in the thresholding 
function. Typically, we identify the threshold $\alpha$ in \eqref{eq:hard_thresholding} as follows. 
The input is sorted by putting 
the entries with largest absolute magnitude in front. Then a threshold is identified by putting the 
marker at some point of the nonzero entries (for example at the largest $15\%$ mark of the total 
nonzeros). Then all the entries with absolute magnitude less than the identified threshold 
are zeroed out. The percent error in the approximate
operations then depends on the percent error in the reconstruction of each row. That is, if 
for an arbitrary row $r$, $\left(W^{-1} \Thr( W r^T )\right)^T$ is not close to $r$, 
then the approximate operations 
using $M$ formed with this threshold will probably not be accurate. A less aggressive threshold 
then needs to be used.
Later we give examples for synthetic data and our seismic tomography application. 
For our application, we have observed that one can expect $M$ 
to be at least $3$ times smaller in memory requirements than 
$A$ without incurring significant errors in the operations $A x$, $A^T y$, and $A^T A x$. 

If $A$ is very large, the matrix $M$ may still be too big to load directly into memory. In that 
case, we may consider splitting the matrix in parts along its rows, with the matrix vector 
operations applied blockwise:
\begin{equation*}
A = \begin{bmatrix}
A_1 \\
A_2 \\
\vdots \\
A_p \\
\end{bmatrix}\ \implies \ 
Ax = 
\begin{bmatrix}
A_1 x\\
A_2 x\\
\vdots \\
A_p x\\
\end{bmatrix} \quad \mbox{and} \quad
A^T y = 
\begin{bmatrix}
A_1\\
A_2\\
\vdots \\
A_p\\
\end{bmatrix}^T 
\begin{bmatrix}
y_1\\
y_2\\
\vdots \\
y_p\\
\end{bmatrix} = \displaystyle\sum_{j=1}^p A_j^T y_j.
\end{equation*} 
Next, we can apply the wavelet compressed technique to the block matrices. We can proceed to form the 
matrices $M_1 = \Thr(A_1 W_1^T), \dots, M_p = \Thr(A_p W_p^T)$, which 
are smaller wavelet thresholded versions of the original blocks $A_1, \dots, A_p$. 
We can then perform approximate operations using these new sparser blocks:
\begin{eqnarray}
\label{eq:block_wavelet_operations}
&& A = 
\begin{bmatrix}
A_1 \\
A_2 \\
\vdots \\
A_p \\
\end{bmatrix}
\rightarrow 
M = 
\begin{bmatrix}
\Thr(A_1 W_1^T) \\
\Thr(A_2 W_2^T) \\
\vdots \\
\Thr(A_p W_p^T) \\
\end{bmatrix}
\implies
Ax \approx
\begin{bmatrix}
M_1 W_1^{-T} x \\
M_2 W_2^{-T} x \\
\vdots \\
M_p W_p^{-T} x \\
\end{bmatrix} \\
\nonumber
&& \mbox{and} \quad 
A^T y \approx \displaystyle\sum_{j=1}^p W_j^{-1} M_j^T y_j.
\end{eqnarray}
In the above formulas, we have used different transform matrices 
$W_1, \dots, W_p$ for the different 
blocks. This may provide an advantage when the data in the matrix can be grouped. For example, 
some groups may have mostly smooth and others may have mostly sharp features. In such a case, it 
may be advantageous to use different transforms (ex, smooth CDF wavelet or sharper Haar wavelet) 
on the different blocks. If this is not the case, the same transform can be used 
for each block so that $W_1 = \dots = W_p = W$. 

%Notice that if one plugs in the compressed wavelet 
%approximations for the matrix operations into \eqref{eq:tikhonov_min_eq_system}, one gets:
%\begin{equation*}
%(W^{-1} M^T M W^{-T} + \lambda I) \tilde{x} = W^{-1} M^T b.
%\end{equation*}
Let us now discuss the application of these ideas to \eqref{eq:tikhonov_min_eq_system}.
Plugging in the approximated matrix-vector operations we obtain:
\begin{equation*}
(W^{-1} M^T M W^{-T} + \lambda I) \tilde{x}_{w} = W^{-1} M^T b.
\end{equation*}
where $\tilde{x}_{w}$ will be the approximation to $\bar{x}$ in \eqref{eq:tikhonov_min_eq_system}.
If $A$ is so large that after forming $M$ we still cannot load $M$ into memory, then  
$M$ would be split into blocks $M_1, \dots, M_p$. No matter how large $A$ is, we 
can always choose $p$ large enough so that the individual blocks $M_j$ are manageable in size and 
can be loaded into RAM. In that case, we can still 
do operations in blocked form via \eqref{eq:block_wavelet_operations} by loading as many parts of 
$M$ as we can into memory, performing part of the operation and then replacing 
the in-memory blocks with the remaining blocks of $M$ to perform the rest. 
As long as fast disks (such as SSDs) are available, this is viable in practice, but may be very slow if 
many operations are needed. In the case that $M$ is too large to be loaded in full, the techniques 
discussed in the following section can be used to obtain further size reductions.

%\newpage
\section{Low Rank SVD Approximation}
\label{sec:SVDCompression}
The wavelet approximation techniques for matrix-vector operations discussed in the previous 
section enable us to approximate the operations \eqref{eq:mat_vec_ops} through a matrix 
several times smaller than $A$. However, in practice, the matrix $M$ can still be quite 
big if $A$ is particularly large. It is plausible that we can do some operations with $A$ 
through $M$ but only for a relatively short amount of time (perhaps through the 
blocked form \ref{eq:block_wavelet_operations}). Assuming that we can indeed do a limited number 
of matrix vector multiplications with $A$ through $M$, we now discuss other techniques for 
compression based on the low rank singular value decomposition (SVD). 
Once such a decomposition is obtained through a limited amount of matrix vector multiplications 
with $A$ (approximated through $M$), we can
obtain approximate forms of regularization algorithms which require the use of 
significantly smaller matrices.

\vspace{5.mm}
\subsection{Computation with Randomized Algorithm}
We now discuss how a rank $k$ low rank SVD approximation can be computed. 
One direct way is to compute it from the 
full SVD of the matrix. Given the full SVD $A = U \Sigma V^T$ one can take the first $k$ 
columns of $U$ and $V$ to be the matrices $U_k$ and $V_k$ and the first $k$ diagonal 
elements of $\Sigma$ to form $\Sigma_k$. For large matrices, this is not practical 
since the computation of the full SVD is prohibitively expensive (the cost for an 
$m \times n$ matrix is on the order of $\mathcal{O}(m n \min(m,n))$ operations \cite{trefethen97}).
The algorithm which we use is an adaptation of the method proposed in 
\cite{Halko:2011:FSR:2078879.2078881}. The cost of the proposed randomized algorithm 
for the rank $k$ SVD approximation is substantially lower (the cost is $\mathcal{O}(m n k)$ operations).

The randomized algorithm finding a rank $k$ approximation of 
$A \in \mathbb{R}^{m \times n}$ proposed in 
\cite{Halko:2011:FSR:2078879.2078881}
consists of several simple steps. The main idea is to obtain a good estimate for the range of 
$A$ by forming products of $A$ with a sample of random vectors, then using the orthogonal basis of this sample matrix to project the original matrix into a smaller, lower dimensional one, 
of which we extract the full SVD and use these components to construct the low rank SVD 
of the original big matrix $A$. The steps are as follows:
\begin{itemize}
\item Take $k$ samples of the range of matrix $A$ by multiplying $A$ with random Gaussian vectors 
to form sample matrix $Y$ of size $m \times k$. We then have $\range Y \approx \range A$.
\item Obtain an orthogonal matrix $Q$ from $Y$ 
(by e.g. performing QR factorization on $Y$ to get $Y=QR$, 
where $Q^TQ=I$ and $R$ is upper triangular). 
Then $\range Q \approx \range A \implies Q Q^T A \approx A$.
\item Project the original matrix into a lower dimensional one: $B = Q^T A$ where $B$ is $k \times n$, substantially smaller than $A$ which is $m \times n$.
\item Take the SVD of the smaller matrix $B = \tilde{U}_k \Sigma_k V^T_k$.
\item Take as low rank SVD of $A$ the product $U_k \Sigma_k V^T_k$ with $U_k = Q \tilde{U}_k$ (since 
$Q Q^T A \approx A$).
\end{itemize}
Various interpretations of these steps from \cite{Halko:2011:FSR:2078879.2078881},
including description of developed open source 
software can be found in \cite{2015arXiv150205366V}. We 
describe here the details of one particular approach mentioned in \cite{2015arXiv150205366V}, 
and formulate it in a way which can be used for very large matrices. 
In the approach we use, we construct a smaller matrix $B B^T$ and work with this matrix 
instead of $B$, because the matrix $B$ of size $k \times n$, can still be quite large for large $n$.
We compute the SVD components $\tilde{U}_k$ and $\Sigma_k$ of $B$ using the eigendecomposition 
of the small $k \times k$ symmetric matrix $B B^T$ and obtain $V_k$ by applying $B^T$.
This way, we avoid building $B$ or taking the SVD of it directly. We use the following relations:
\begin{eqnarray*}
&& 
   B 
   = \tilde{U}_k \Sigma_k V^T_k 
   = \displaystyle\sum_{i=1}^k \sigma_i \tilde{u}_i v_i^T 
 \quad \mbox{;} \quad 
   B^T 
   = V_k \Sigma_k \tilde{U}_k^T 
 \quad \mbox{;} \quad
   B v_i 
   = \sigma_i \tilde{u}_i;
\\
&& 
   B B^T 
   = \left( \displaystyle\sum_{i=1}^k 
         \sigma_i \tilde{u}_i v^T_i 
     \right) 
     \left( \displaystyle\sum_{j=1}^k 
         \sigma_j \tilde{u}_j v^T_j 
     \right)^T 
   = \displaystyle\sum_{i,j=1}^k 
        \sigma_i \sigma_j \tilde{u}_i v_i^T v_j \tilde{u}_j^T 
   = \displaystyle\sum_{i=1}^k \sigma_i^2 \tilde{u}_i \tilde{u}_i^T 
   = \tilde{U}_k D_k \tilde{U}_k^T .
\end{eqnarray*}
This means the eigendecomposition of the $k \times k$ matrix $B B^T$ gives us the low rank 
SVD components $U_k = Q \tilde{U}_k$ and $\Sigma_k = \sqrt{D_k}$ element-wise.
To compute the right eigenvectors $v_i$, we can use the following relations:
\begin{equation*}
B^T \tilde{U}_k = V_k \Sigma_k \tilde{U}_k^T \tilde{U}_k = V_k \Sigma_k \implies B^T \tilde{U}_k \Sigma_k^{-1} = V_k ,
\end{equation*}
which implies: 
\begin{equation}
\label{eq:svd_computation_for_eigenvectorsV}
v_i =  V_k e_i = (B^T \tilde{U}_k \Sigma_k^{-1}) e_i = \tfrac{1}{\sigma_i} B^T \tilde{u}_i = \tfrac{1}{\sigma_i} A^T Q \tilde{u}_i,
\end{equation} 
assuming all the singular values in $\Sigma_k$ are above zero 
(which is the case for $k$ smaller than the numerical rank $r$).
In practice, a slight oversampling often improves the approximation. For an approximation 
of rank $k$, $k+p$ samples can be used with $p$ a small number like $10$. Other techniques 
like the power sampling scheme also improve the approximation and are described in 
more detail in \cite{2015arXiv150205366V}.

Notice that all matrix-vector operations involving $A$ and $A^T$ can be approximated via the 
wavelet compressed matrices $M$ and $M^T$. To build up $B B^T$ column by column we can use 
matrix-vector products with standard basis vectors $e_j$:
\begin{equation}
\label{eq:svd_mat_mult_approx}
B B^T e_j = Q^T A A^T Q e_j \approx Q^T M W^{-T} W^{-1} M^T Q e_j,
\end{equation} 
and for the right eigenvectors, we have from \eqref{eq:svd_computation_for_eigenvectorsV} that:
\begin{equation*}
v_i = \tfrac{1}{\sigma_i} A^T Q u_i \approx \tfrac{1}{\sigma_i} W^{-1} M^T Q u_i .
\end{equation*}
We now illustrate the main steps of the random algorithm to compute the 
low rank SVD, which we use in our computations for the numerical experiments. Below, 
we use Matlab like pseudocode.
\begin{itemize}
\item Take $l = k+p$ samples of matrix $A$ (where $p$ is a small oversampling number) 
with random Gaussian vectors and perform Gram-Schmidt 
orthogonalization to calculate the projection matrix $Q$.
\begin{lstlisting}
for j=1:l
    rj = randn(n,1);
    yj = A*rj;
    Y(:,j) = yj;
end

Q = Y;
for ind=1:2
    for j=1:l
        vj = Q(:,j);
        for i=1:(j-1)
            vi = Q(:,i);
            vj = vj - project_vec(vj,vi);
        end
        vj = vj/norm(vj);
        Q(:,j) = vj;
    end
end
\end{lstlisting}
where the projection of $v$ in direction of $u$ is defined as $\frac{(v \cdot u)}{||u||_2^2} u$.
For best results, the Gram-Schmidt orthogonalization should be performed twice to account for 
loss of orthogonality. Note that for matrix-vector multiplications with $A$ we use 
$A r_i \approx M W^{-T} r_i$.

\item Build the $l \times l$ matrix $B B^T = Q^T A A^T Q$ by computing $k$ matrix-vector 
products with standard basis vectors.

Once we have built $Q$ and its transpose, we can form the matrix $B B^T$ column by column:
\begin{lstlisting}
BBt = zeros(l,l);
for j=1:l
    ej = zeros(l,1);
    ej(j) = 1;
    colj = Qt*(A*(At*(Q*ej)));
    BBt(:,j) = colj;
end
\end{lstlisting}
Here, we would make use of \eqref{eq:svd_mat_mult_approx} for approximating 
$Q^T A A^T Q e_j$.

\item Compute the eigendecomposition of $B B^T$

This simply is the eigendecomposition of a small $k \times k$ matrix:
\begin{lstlisting}
[Uhat,D] = eig(BBt);
\end{lstlisting}

\item Compute the low rank SVD components of $A$ by using the eigendecomposition derived 
in the previous step and applying $B^T = A^T Q$ to eigenvectors.

Here we use the fact that the eigenvalues of $B B^T$ are the squares of the singular values 
of $B$ and the computation \eqref{eq:svd_computation_for_eigenvectorsV} for the eigenvectors $V$.
\begin{lstlisting}
Sigma = zeros(l,l);
for i=1:l
    Sigma(i,i) = sqrt(D(i,i));
end

U = Q * Uhat;

V = zeros(n,l);
for j=1:l
    vj = 1/Sigma(j,j) * (At * U(:,j));
    V(:,j) = vj;
end
\end{lstlisting}
Here, we could use $A^T u_j \approx W^{-1} M^T u_j$. 

\item Finally, we extract the most dominant $k$ components of $U$, $V$, and $\Sigma$ 
to form $U_k = U(:,1:K), V_k = V(:,1:k), \Sigma_k = \Sigma(1:k,1:k)$. 
Notice that in this and previous steps, we use either the first or the last $k,l$ singular 
vectors and values, depending on the order returned by the eig function, corresponding to 
biggest to smallest by absolute magnitude.  

\end{itemize}
We note that the implementation of the low rank SVD algorithm above is 
simple, as long as we can perform matrix-vector operations using the wavelet 
compressed matrix $M$ and compute 
the eigendecomposition of a small $k \times k$ matrix, which can be done with a large number 
of available numerical packages. The disadvantage of this version is that working with the matrix $B B^T$
essentially squares the condition number of $A$, such that small singular values near machine precision
may not be properly resolved. This is an issue if $A$ is expected to have very small singular
values amongst $\sigma_1, \dots, \sigma_k$. However, if we take  
$k$ to be small relative to $\min(m,n)$ as we do in our application, $\sigma_k$ is significantly 
larger in magnitude than machine precision. The implementation of the algorithm in the pseudocode above 
is not very efficient for the randomized algorithm proposed in \cite{Halko:2011:FSR:2078879.2078881}, 
but one that is practical to use for very large $A$ when 
the corresponding wavelet compressed matrix $M = \Thr(A W^T)$ is available. In particular, 
for a more efficient implementation, one may want to block as many operations as possible, replacing 
matrix-vector by matrix-matrix multiplications. If possible, one may want to explicitly 
compute the matrix $B$ and then use it to form $B B^T$. Likewise, $V_k$ can be calculated 
directly from the matrix product $B^T \tilde{U_k} \Sigma_k^{-1}$. A power iteration strategy 
can also be implemented to improve accuracy in cases where the tail singular values 
decay more slowly. We refer the reader to \cite{2015arXiv150205366V} for more details.

%\newpage
\subsection{Application to Regularization Schemes}
For purposes of iterative regularization algorithms, we can make use of the low rank SVD 
in several ways. If we obtain the low rank SVD of the whole matrix, we can directly use it to 
approximate matrix vector operations:
\begin{equation}
Ax \approx U_k \left( \Sigma_k (V^T_k x) \right) \quad \mbox{and} \quad A^T y \approx V_k \left( \Sigma_k (U^T_k y) \right),
\end{equation} 
and in some situations this is the most convenient and straightforward approach. The disadvantage 
of this approach is that one must keep the matrices $U_k,U_k^T,V_k,V_k^T$ in memory. Here and 
below we do not pay attention to storing the matrix $\Sigma_k$ which is a very small diagonal matrix in comparison to the former matrices.
If the matrix $A$ is large it may be difficult to compute the low rank SVD of the whole matrix $A$. 
Instead, if we block $A$ as previously discussed, we can compute the low rank SVD of certain blocks 
or of each block. In some applications, it may be possible to arrange the blocks of 
$A$ in a way that the first block of $A$ contains many linearly dependent rows. 
If that is the case, then it is worthwhile to use the low rank SVD for the first block 
since it could be approximated well with small $k$. We can then write down mixed relations as follows:
\begin{equation}
\label{eq:mixed_relation_matvec_ops1}
Ax \approx
\begin{bmatrix}
U_{k_1} \Sigma_{k_1} V^T_{k_1} x \\
M_2 W_2^{-T} x \\
\vdots \\
M_p W_p^{-T} x \\
\end{bmatrix}
\quad \mbox{and} \quad
A^T y \approx V_{k_1} \Sigma_{k_1} U^T_{k_1} y_1 + \displaystyle\sum_{j=2}^p W_j^{-1} M^T_j y_j,
\end{equation}
where in this example we have used the low rank SVD approximation for the first part 
of the matrix and the wavelet based approximation for the other parts. 
%Such approximations can then be used in turn to compute the low rank SVD of the whole $A$. 

Additional information can be learned by plugging in the low rank SVD directly into the regularization 
system. Our general model problem and its corresponding linear system are:
\begin{equation}
\bar{x} = \arg\min_x \left( ||Ax - b||_2^2 + \lambda_1 ||x||_2^2 + \lambda_2 ||Lx||_2^2 \right) \implies (A^T A + \lambda_1 I + \lambda_2 L^T L) \bar{x} = A^T b.
\end{equation}
Replacing all instances of $A$ by the low rank SVD results in:
\begin{equation*}
(A_k^T A_k + \lambda_1 I + \lambda_2 L^T L) \tilde{x_1} = A_k^T b,
\end{equation*}
which when expanded gives:
\begin{equation}
\label{eq:tikhonov_approx_replaceallA_by_svd1}
(V_k \Sigma^2_k V^T_k + \lambda_1 I + \lambda_2 L^T L) \tilde{x_1} = V_k \Sigma_k U_k^T b.
\end{equation}
The advantage of \eqref{eq:tikhonov_approx_replaceallA_by_svd1} is that if the right 
hand side $V_k \Sigma_k U_k^T b$ is computed at the start of the iteration, only the matrices 
$V_k$ and $V_k^T$ must be kept in memory during the iteration. 
We may think of precomputing the right hand side $A^T b$ and 
approximating only the operator $A^T A$. Note that $A^T b$ can always be precomputed 
before the iteration as long as we can split up $A$ into blocks. In this case we get:
\begin{equation}
\label{eq:tikhonov_approx_replaceallA_by_svd2}
(V_k \Sigma^2_k V^T_k + \lambda_1 I + \lambda_2 L^T L) \hat{x_1} = A^T b.
\end{equation}
As we will show later, this can result in slightly better error upper bound when the 
singular value $\sigma_{k+1}$ is sufficiently small, though the norm of the 
solution for the same choice of $\lambda_1$ would be higher in this case. 
Another approach is to work with the lower dimensional projected system:
\begin{equation}
\label{eq:tikhonov_approx_utautb1}
(U^T_k A) x = U^T_k b ,
\end{equation} 
where $U^T_k A$ is $k \times n$ if $A \in \mathbb{R}^{m \times n}$. Note that we have the following 
simple result:
\begin{lemma}
Given the low rank SVD $A_k = U_k \Sigma_k V_k^T$ of $A$, we have that 
$U_k^T A = U_k^T A_k = \Sigma_k V_k^T$.
\end{lemma}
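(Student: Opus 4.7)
The plan is to exploit the orthogonal decomposition $A = A_k + \hat{A}_k$ that was already written down in the excerpt, namely
\begin{equation*}
A = U_k \Sigma_k V_k^T + \hat{U}_k \hat{\Sigma}_k \hat{V}_k^T,
\end{equation*}
together with the orthogonality relations $U_k^T U_k = I$ and $U_k^T \hat{U}_k = 0$ that come from $U = [U_k, \hat{U}_k]$ being orthogonal. These are the only facts needed; no new machinery is required.

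First I would left-multiply the decomposition $A = A_k + \hat{A}_k$ by $U_k^T$, giving
\begin{equation*}
U_k^T A = U_k^T U_k \Sigma_k V_k^T + U_k^T \hat{U}_k \hat{\Sigma}_k \hat{V}_k^T.
\end{equation*}
Then I would substitute $U_k^T U_k = I$ in the first term and $U_k^T \hat{U}_k = 0$ in the second term, so that the second term vanishes and the first collapses to $\Sigma_k V_k^T$. This simultaneously shows $U_k^T A = \Sigma_k V_k^T$ and, by the same calculation applied to the truncated sum alone, $U_k^T A_k = \Sigma_k V_k^T$, which chains the three equalities in the statement.

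There is no real obstacle here; the statement is essentially a restatement of the orthogonality of the left singular vectors, and the only thing to be careful about is citing the orthogonality relations explicitly (both $U_k^T U_k = I$ and $U_k^T \hat{U}_k = 0$) rather than appealing to the earlier lemma, which was phrased for vectors rather than for the cross-product matrix. A one-line alternative would be to write $U_k^T U = [I \; 0]$ and then $U_k^T A = U_k^T U \Sigma V^T = [I \; 0] \Sigma V^T = \Sigma_k V_k^T$, which I might include as a compact second phrasing.
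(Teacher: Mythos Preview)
Your proof is correct and is essentially identical to the paper's own proof: the paper also writes $U_k^T A = U_k^T(U_k \Sigma_k V_k^T + \hat{U}_k \hat{\Sigma}_k \hat{V}_k^T) = \Sigma_k V_k^T + 0$ using the same orthogonality relations, and separately computes $U_k^T A_k = U_k^T(U_k \Sigma_k V_k^T) = \Sigma_k V_k^T$.
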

\begin{proof}
First, $U_k^T A_k = U_k^T (U_k \Sigma_k V_k^T) = \Sigma_k V_k^T$. Also:
\begin{equation*}
U_k^T A = U_k^T \left( U_k \Sigma_k V_k^T + \hat{U}_k \hat{\Sigma}_k \hat{V}_k^T \right)  = \Sigma_k V_k^T + 0 = \Sigma_k V_k^T.
\end{equation*}
\end{proof}
If we solve \eqref{eq:tikhonov_approx_utautb1} by means of Tikhonov regularization:
\begin{eqnarray}
\label{eq:tikhonov_approx_utautb_linear_system1}
&& \tilde{x_2} = \arg\min_x \left\{ ||(U^T_k A) x - U^T_k b||_2^2 + \lambda_1 ||x||_2^2 + \lambda_2 ||L x||_2^2 \right\} \\
& \implies\ & \left( (U_k^T A)^T (U_k^T A) + \lambda_1 I  + \lambda_2 L^T L \right) \tilde{x_2} = (U_k^T A)^T U_k^T b,
\end{eqnarray} 
we will obtain the same solution as \eqref{eq:tikhonov_approx_replaceallA_by_svd1}:

\begin{lemma}
The approximation scheme 
$(V_k \Sigma^2_k V^T_k + \lambda_1 I + \lambda_2 L^T L) \tilde{x} = V_k \Sigma_k U_k^T b$ has the same 
solution as the Tikhonov regularized solution \eqref{eq:tikhonov_approx_utautb_linear_system1} of 
the projected system $(U^T_k A) x = U^T_k b$. 
\end{lemma}
\begin{proof}
Since
\begin{eqnarray*}
&& A = U_k \Sigma_k V^T_k + \hat{U}_k \hat{\Sigma}_k \hat{V}^T_k \implies U^T_k A = \Sigma_k V^T_k + 0 = \Sigma_k V^T_k \\
&\implies\ & (U_k^T A)^T (U_k^T A) = (U_k^T A_k)^T (U_k^T A_k) = (\Sigma_k V_k^T)^T (\Sigma_k V_k^T) = V_k \Sigma_k^2 V_k^T    ,
\end{eqnarray*}
the linear system from \eqref{eq:tikhonov_approx_utautb_linear_system1} is equivalent to:
\begin{equation*}
\left( V_k \Sigma_k^2 V_k^T + \lambda_1 I + \lambda_2 L^T L \right) \tilde{x_2} = (U_k^T A)^T U_k^T b = A^T U_k U_k^T b.
\end{equation*} 
Next, for the right hand side we have:
\begin{equation*}
A^T U_k = V_k \Sigma_k U^T_k U_k + \hat{V_k} \hat{\Sigma_k} \hat{U^T_k} U_k = V_k \Sigma_k I + 0 = V_k \Sigma_k \implies A^T U_k U_k^T b = V_k \Sigma_k U_k^T b.
\end{equation*}
Hence the solution of \eqref{eq:tikhonov_approx_utautb_linear_system1} is 
equivalent to that of \eqref{eq:tikhonov_approx_replaceallA_by_svd1}:
\begin{equation*}
(V_k \Sigma^2_k V^T_k + \lambda_1 I + \lambda_2 L^T L) \tilde{x_2} = V_k \Sigma_k U_k^T b.
\end{equation*}
\end{proof}

The advantage of \eqref{eq:tikhonov_approx_utautb1} is that it may be convenient for larger systems 
where we can only perform the low rank SVD of its blocks. In that case, we may form the blocked system:
\begin{equation}
\label{eq:uta_utb_block_projected_system}
\begin{bmatrix}
U_{k_1}^T A_1 \\
U_{k_2}^T A_2 \\
\vdots \\
U_{k_p}^T A_p \\
\end{bmatrix} x = 
\begin{bmatrix}
U_{k_1}^T b_1 \\
U_{k_2}^T b_2 \\
\vdots \\
U_{k_p}^T b_p \\
\end{bmatrix} \quad \mbox{or} \quad  
\begin{bmatrix}
\Sigma_{k_1} V_{k_1}^T \\
\Sigma_{k_2} V_{k_2}^T \\
\vdots \\
\Sigma_{k_p} V_{k_p}^T \\
\end{bmatrix} x = 
\begin{bmatrix}
U_{k_1}^T b_1 \\
U_{k_2}^T b_2 \\
\vdots \\
U_{k_p}^T b_p \\
\end{bmatrix},
\end{equation}
and solve the optimization problem via the augmented normal equations:
\begin{equation*}
\begin{bmatrix}
U_{k_1}^T A_{1}\\
U_{k_2}^T A_{2}\\
\vdots \\
U_{k_p}^T A_{p}\\
\sqrt{\lambda_1} I \\
\sqrt{\lambda_2} L \\
\end{bmatrix}^T
\begin{bmatrix}
U_{k_1}^T A_{1}\\
U_{k_2}^T A_{2}\\
\vdots \\
U_{k_p}^T A_{p}\\
\sqrt{\lambda_1} I \\
\sqrt{\lambda_2} L \\
\end{bmatrix}
\tilde{x_2} = 
\begin{bmatrix}
U_{k_1}^T A_{1}\\
U_{k_2}^T A_{2}\\
\vdots \\
U_{k_p}^T A_{p}\\
\sqrt{\lambda_1} I \\
\sqrt{\lambda_2} L \\
\end{bmatrix}^T
\begin{bmatrix}
U_{k_1}^T b_1 \\
U_{k_2}^T b_2 \\
\vdots \\
U_{k_p}^T b_p \\
0 \\
0 \\
\end{bmatrix}
\quad\mbox{or}\quad
\begin{bmatrix}
\Sigma_{k_1} V_{k_1}^T\\
\Sigma_{k_2} V_{k_2}^T \\
\vdots \\
\Sigma_{k_p} V_{k_p}^T \\
\sqrt{\lambda_1} I \\
\sqrt{\lambda_2} L \\
\end{bmatrix}^T
\begin{bmatrix}
\Sigma_{k_1} V_{k_1}^T\\
\Sigma_{k_2} V_{k_2}^T \\
\vdots \\
\Sigma_{k_p} V_{k_p}^T \\
\sqrt{\lambda_1} I \\
\sqrt{\lambda_2} L \\
\end{bmatrix}
\tilde{x_2} = 
\begin{bmatrix}
\Sigma_{k_1} V_{k_1}^T\\
\Sigma_{k_2} V_{k_2}^T \\
\vdots \\
\Sigma_{k_p} V_{k_p}^T \\
\sqrt{\lambda_1} I \\
\sqrt{\lambda_2} L \\
\end{bmatrix}^T
\begin{bmatrix}
U_{k_1}^T b_1 \\
U_{k_2}^T b_2 \\
\vdots \\
U_{k_p}^T b_p \\
0 \\
0 \\
\end{bmatrix}.
\end{equation*}
The number of eigenvectors for each block can be adjusted based on their conditioning. 
If the same $k$ is used for all the blocks then some are bound to be projected less accurately than 
others. If the right hand side is precomputed, only the matrices $V_{k_j}^T$ and $\Sigma_{k_j}$ must be 
in memory for each block. If it is easier to compute the eigenvector matrix $U_k$, then the default 
system with $U_k^T A$ may be useful. 

A more aggressive approach is to use the right eigenvectors $V_k$ to project the system 
from both sides to form a matrix of size $k \times k$. Instead of solving the full system:
\begin{equation*}
(A^T A + \lambda_1 I + \lambda_2 L^T L) \bar{x} = A^T b ,
\end{equation*}
we project the matrix used to a smaller space by multiplying on left by $V_k^T$ and 
preconditioning on the right by $V_k$:
\begin{equation*}
V_k^T (A^T A + \lambda_1 I + \lambda_2 L^T L) (V_k \tilde{y_3}) = V_k^T A^T b \quad \mbox{;} \quad \tilde{x_3} = V_k \tilde{y_3}.
\end{equation*}
Expanding this and noting that $V_k^T V_k = I$, we have:
\begin{equation}
\label{eq:tikhonov_approx_ktimesk1}
\left( V_k^T A^T A V_k + \lambda_1 I + \lambda_2 V_k^T L^T L V_k \right) \tilde{y_3} = V_k^T A^T b \quad \mbox{;} \quad \tilde{x_3} = V_k \tilde{y_3}.
\end{equation}
The key observation is that the matrix used in the linear system is $V_k^T A^T A V_k$, 
which is just of size $k \times k$, much smaller than the 
$m \times n$ matrix $A$. We can further simplify \eqref{eq:tikhonov_approx_ktimesk1} 
using the following calculations:
\begin{lemma}
Given the low rank SVD $A_k = U_k \Sigma_k V_k^T$ of $A$, we have that 
$V_k^T A^T A V_k = V_k^T A_k^T A_k V_k = \Sigma_k^2$ and $V_k A^T b = V_k A_k^T b = \Sigma_k U_k^T b$.
\end{lemma}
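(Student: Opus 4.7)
The plan is to reduce both identities to simple applications of the block orthogonality relations $V_k^T V_k = I$, $V_k^T \hat{V}_k = 0$, $U_k^T U_k = I$, which are already recorded in the excerpt, applied to the splitting $A = A_k + \hat{A}_k = U_k\Sigma_k V_k^T + \hat{U}_k\hat{\Sigma}_k\hat{V}_k^T$ established earlier.

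First I would handle the operator identity $V_k^T A^T A V_k = \Sigma_k^2$. The cleanest route is to compute $A V_k$ directly: substitute the two-term SVD splitting of $A$ and use $V_k^T V_k = I$ together with $\hat{V}_k^T V_k = 0$ to get $A V_k = U_k \Sigma_k$. Then $V_k^T A^T A V_k = (A V_k)^T(A V_k) = \Sigma_k U_k^T U_k \Sigma_k = \Sigma_k^2$. The exact same calculation applied to $A_k$ alone (the second term simply being absent) gives $A_k V_k = U_k\Sigma_k$ and hence $V_k^T A_k^T A_k V_k = \Sigma_k^2$, establishing the equality $V_k^T A^T A V_k = V_k^T A_k^T A_k V_k = \Sigma_k^2$ simultaneously.

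Next I would handle the right-hand-side identity $V_k^T A^T b = \Sigma_k U_k^T b$ (there is a typographical $V_k$ in the statement that should clearly read $V_k^T$, consistent with the usage in \eqref{eq:tikhonov_approx_ktimesk1}). Using the splitting of $A^T$ displayed before Lemma~1.2, namely $A^T = V_k \Sigma_k U_k^T + \hat{V}_k \hat{\Sigma}_k \hat{U}_k^T$, multiplying on the left by $V_k^T$ kills the hat-block by $V_k^T \hat{V}_k = 0$ and collapses the first term by $V_k^T V_k = I$, yielding $V_k^T A^T = \Sigma_k U_k^T$. Applying this to $b$ gives $V_k^T A^T b = \Sigma_k U_k^T b$, and the same single-term computation applied to $A_k^T = V_k \Sigma_k U_k^T$ gives $V_k^T A_k^T b = \Sigma_k U_k^T b$.

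I do not expect any genuine obstacle: this is purely bookkeeping with the block-orthogonal structure of the SVD. The only thing to be a bit careful about is not conflating $V_k^T V_k$ (which is the $k \times k$ identity) with $V_k V_k^T$ (which is only a projector, not the full identity), exactly as in the previous lemma of the paper; the argument above only ever uses the former, so no subtlety enters.
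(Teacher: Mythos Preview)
Your proposal is correct and follows essentially the same route as the paper: the paper computes $V_k^T A^T = \Sigma_k U_k^T$ from the splitting $A^T = V_k\Sigma_k U_k^T + \hat V_k\hat\Sigma_k\hat U_k^T$ using $V_k^T\hat V_k = 0$, then transposes to get $A V_k = U_k\Sigma_k$, and multiplies to obtain $\Sigma_k^2$ and $\Sigma_k U_k^T b$, exactly the manipulations you describe. Your observation that the $V_k$ in the statement should read $V_k^T$ is also correct.
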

\begin{proof}
\begin{eqnarray*}
&& V_k^T A^T = V_k^T (V_k \Sigma_k U_k^T + \hat{V}_k \hat{\Sigma}_k \hat{U}_k^T) =  V_k^T A_k^T = \Sigma_k U_k^T \implies A V_k = A_k V_k = (\Sigma_k U_k^T)^T = U_k \Sigma_k \\
&\implies \ & V_k^T A^T A V_k = \Sigma_k U_k^T U_k \Sigma_k = \Sigma_k^2 \\
&\implies & V_k A_k^T b = \Sigma_k U_k^T b.
\end{eqnarray*}
\end{proof} 
Thus, we can rewrite \eqref{eq:tikhonov_approx_ktimesk1} as:
\begin{equation}
\label{eq:tikhonov_approx_ktimesk2}
\left( \Sigma_k^2 + \lambda_1 I + \lambda_2 V_k^T L^T L V_k \right) \tilde{y_3} = \Sigma_k U_k^T b \quad \mbox{;} \quad \tilde{x_3} = V_k \tilde{y_3}.
\end{equation}
We will show later that when $\lambda_2 = 0$, $\tilde{x_3} = \tilde{x_1}$, an important result, 
since the system for $\tilde{y_3}$ can be solved on a small machine, as it involves 
just a $k \times k$ matrix. When 
$\lambda_2 \neq 0$, this is only an approximation. We can obtain the $k$ columns of $V_k^T L^T L V_k$ 
by evaluating matrix vector products:
\begin{equation*}
V_k^T L^T L V_k e_j \quad \mbox{for} \quad j=1,\dots,k.
\end{equation*}
This is feasible to do in practice, since $k$ is not very large. This method is useful when 
many solutions with different values of $\lambda_1$ and $\lambda_2$ are required, or when a 
rough guess to warm start a more accurate method is desired. 

\vspace{2.mm} 
Let us now summarize the different techniques we have described for approximate $\ell_2$ regularization 
using the low rank SVD and their computational requirements. 

\begin{itemize}
\item[ (1)] We can implement $(A^T A + \lambda_1 I + \lambda_2 L^T L) \bar{x} = A^T b$ as usual and 
replace the operations $A x$ and $A^T y$ with 
$U_k \Sigma_k V^T_k x$ and $V_k \Sigma_k U^T_k y$. This requires one to have the 
matrices $U_k,U_k^T,V_k,V_k^T$ in memory, which may not be very efficient. However, this direct 
approach may be useful for larger matrices split into blocks using relations such as 
\eqref{eq:mixed_relation_matvec_ops1}, where the low rank SVD is applied only to certain blocks 
and not to the whole matrix. In that case, only the SVD components for the relevant blocks need to 
be loaded.

\item[ (2)] We can plug in the low rank SVD into the regularization problem to get the system:
\begin{equation*}
(V_k \Sigma^2_k V^T_k + \lambda_1 I + \lambda_2 L^T L) \tilde{x_1} = V_k \Sigma_k U_k^T b.
\end{equation*}
Note that the right hand side $V_k \Sigma_k U_k^T b$ can be precomputed before the iteration 
so that only the matrices $V_k$ and $V_k^T$ need to be in memory during iteration. The result should 
be equivalent to the first case but this approach is more efficient.
Additionally, we can precompute accurately the right hand side $A^T b$ and use the system:
\begin{equation*}
(V_k \Sigma^2_k V^T_k + \lambda_1 I + \lambda_2 L^T L) \hat{x_1} = A^T b.
\end{equation*}
Here the only difference is in the right hand side. As we will see later this can sometimes lead to 
solutions with a lower upper error bound, but should be used with a larger threshold for $\lambda_1$.

\item[ (3)] We can utilize the lower dimensional projected system $U_k^T A x = U_k^T b$. 
The corresponding system for the regularized problem:
\begin{equation*}
\left( (U_k^T A)^T (U_k^T A) + \lambda_1 I + \lambda_2 L^T L\right) \tilde{x_2} = (U_k^T A) U_k^T b
\end{equation*}
is equivalent to the system for $\tilde{x_1}$. However, in certain cases, the matrix $U_k$ may be 
easier to compute than $V_k$ (depending on the dimensions of $A^T A$ and $A A^T$) in which case 
one may then compute $U_k^T A$ by means of matrix-vector products $A^T U_k e_j$ for $j=1,\dots,k$. 
The method may also be useful for large systems since we can make use of 
\eqref{eq:uta_utb_block_projected_system}.

\item[ (4)] We can use the $k \times k$ system:
\begin{equation*}
\left( \Sigma_k^2 + \lambda_1 I + \lambda_2 V_k^T L^T L V_k \right) \tilde{y_3} = \Sigma_k U_k^T b \quad \mbox{;} \quad \tilde{x}_3 = V_k \tilde{y}_3.
\end{equation*}
The solution of the linear system can be done on small memory computers since it involves 
the use of $k \times k$ matrices only and one multiplication with $V_k$ at the end. The last step 
can be performed on a larger machine loading only $V_k$ into memory; or on smaller 
machines in blocks. This scheme is useful when many runs with the system with different values 
of $\lambda_1$ and $\lambda_2$ are desired. The solution is equivalent to $\tilde{x_1}$ when 
$\lambda_2 = 0$ as shown later in this section. 
\end{itemize}

Note that up to now we have discussed the application of the compression techniques to $\ell_2$ norm 
minimization problems. However, the techniques are applicable to other types of regularization also. 
For example, for $\ell_1$ regularization, where we minimize $||x||_1$ instead of $||x||_2$, 
one typically uses a scheme similar to the iterative soft thresholding algorithm 
\cite{ingrid_thresholding1}:
\begin{equation*}
x^{n+1} = \mathbb{S}_{\tau} \left( x^n + A^T b - A^T A x^n \right),
\end{equation*}
where $\left(\mathbb{S}_{\tau}(x)\right)_k = \sgn(x_k) \max{\{0, |x_k| - \tau\}}$ is the 
componentwise soft thresholding function. The main computational requirement here is in 
the operation $A^T A x^n$, just as for $\ell_2$ regularization. Hence, many of the 
techniques we have described can be used for different types of regularization problems.

%\newpage
\subsection{Further Analysis and Error Bounds}
In this section, we give more analysis for the SVD based schemes we have discussed. 
To make the analysis easier, we assume that 
$\lambda_1 = \lambda$ and $\lambda_2 = 0$ so we can do our analysis without 
the smoothing operator $L$, which is not approximated.
Consider now the true solution:
\begin{equation}
\label{eq:tikhonov_true_solution}
\bar{x} = (A^T A + \lambda I)^{-1} A^T b \quad (\mbox{True Solution}).
\end{equation}
Notice that we can easily understand the significance of \eqref{eq:tikhonov_true_solution} by plugging 
in the (full rank) SVD $A = U \Sigma V^T$ into \eqref{eq:tikhonov_true_solution}. One then 
obtains the solution:
\begin{equation*}
\bar{x} = V D U^T b \quad \mbox{with} \quad D = \Diag\left( 
\frac{\sigma_1}{\sigma^2_1 + \lambda},
\frac{\sigma_2}{\sigma^2_2 + \lambda}, \ldots,
\frac{\sigma_\mathrm{r}}{\sigma^2_\mathrm{r} + \lambda},
0, \ldots,0  
\right).
\end{equation*}
We see that 
the regularization alleviates the effects of the singular vectors
%the effect of the regularization is to filter the effects of singular vectors 
corresponding to small singular values $\sigma_i$,
by replacing each $\sigma_i$ by $\frac{\sigma_i}{\sigma^2_i + \lambda}$,  which prevents the
singular vectors corresponding to singular values smaller than $\lambda$ from dominating the 
solution \cite{Tikhonov63}. Notice that while the application of Tikhonov 
minimization acts to filter the small singular values of $A$ on the solution, 
the use of the low rank SVD $A_k$ in place of $A$ removes many of the small values 
entirely: the filtering is now done on those singular values which are retained.

We now restate the approximate solutions 
$\tilde{x_1},\hat{x_1},\tilde{x_2},\tilde{x_3}$ that have been described 
in detail in the last section, but now with $\lambda_1 = \lambda$ and $\lambda_2 = 0$:
\begin{eqnarray}
\label{eq:tilde_x1}
 \tilde{x_1} &=& (A^T_k A_k + \lambda I)^{-1} A^T_k b ,\\
\label{eq:hat_x1}
 \hat{x_1} &=& (A^T_k A_k + \lambda I)^{-1} A^T b ,\\
\label{eq:tilde_x2}
 \tilde{x_2} &=& \left( (U_k^T A)^T (U_k^T A) + \lambda I \right)^{-1} (U_k^T A) U_k^T b ,\\
\label{eq:tilde_x3}
 \tilde{x_3} &=& V_k \left(\Sigma_k^2 + \lambda I\right)^{-1} \Sigma_k U_k^T b.
\end{eqnarray}
Recall here that $\tilde{x_1}$ and $\hat{x_1}$ correspond respectively, to 
\eqref{eq:tikhonov_approx_replaceallA_by_svd1} and \eqref{eq:tikhonov_approx_replaceallA_by_svd2}, 
$\tilde{x_2}$ corresponds to \eqref{eq:tikhonov_approx_utautb_linear_system1}, and 
$\tilde{x_3}$ corresponds to \eqref{eq:tikhonov_approx_ktimesk1}.
We have previously shown that $\tilde{x_2}$ and $\tilde{x_1}$ have the same solution. 
We will show in this section that $\tilde{x_3}$ also has the same solution as $\tilde{x_1}$.

Using the Woodbury inverse formula \eqref{eq:woodbury_inverse_formula},
we can derive expressions relating the terms 
$(A^T_k A_k + \lambda I)^{-1}$ and $(A^T A + \lambda I)^{-1}$ which appear 
in the solutions $\tilde{x_1},\hat{x_1},\tilde{x_2},\tilde{x_3}$ and in the 
true solution $\bar{x}$.
\begin{lemma}
\label{lemma_error_bnd1}
Let $k$ be in the range $1 \leq k \leq r-1$ and $\lambda > 0$. Then:
\begin{eqnarray}
\label{eq:lemma_error_bnd1_eqn1}
&& (A^T_k A_k + \lambda I)^{-1} = \lambda^{-1} I - V_k S_k V_k^T \\
\nonumber
&& \mbox{with} \quad S_k = \Diag\left( \frac{\sigma_s^2}{\lambda^2+\lambda\sigma_s^2} \right) \quad \mbox{for} \quad s = 1,\dots,k,
\end{eqnarray}
and:
\begin{eqnarray}
\label{eq:lemma_error_bnd1_eqn2}
&& (A^T A + \lambda I)^{-1} =  (A^T_k A_k + \lambda I)^{-1} - \hat{V}_k \hat{S}_k \hat{V}_k^T \\
\nonumber
&& \mbox{with} \quad \hat S_k = \Diag\left( \frac{\sigma_{s}^2}{\lambda^2+\lambda\sigma_{s}^2} \right) \quad \mbox{for} \quad s=k+1,\dots,r.
\end{eqnarray}
These imply that:
\begin{eqnarray}
&&\bar{x} = \left( (A^T_k A_k + \lambda I)^{-1} - \hat{V}_k \hat{S}_k \hat{V}_k^T \right) A^T b, \\
&&\tilde{x_1} = \left( \lambda^{-1} I - V_k S_k V_k^T \right) A_k^T b, \\
&&\hat{x_1} = \left( \lambda^{-1} I - V_k S_k V_k^T \right) A^T b.
\end{eqnarray}

%\begin{eqnarray*}
%(A^T_k A_k + \lambda I)^{-1} &=& \lambda^{-1} I - V_k S_k V_k^T \\
%(A^T A + \lambda I)^{-1} &=&  (A^T_k A_k + \lambda I)^{-1} - \hat{V}_k \hat{S}_k \hat{V}_k^T \\
%\end{eqnarray*}
%where
%\begin{eqnarray*}
%\label{eq:s1}
%   S_k &=& \mbox{Diag}\left(
%      \frac{\sigma_1^2}{\lambda^2+\lambda\sigma_1^2},
%      \frac{\sigma_2^2}{\lambda^2+\lambda\sigma_2^2},\ldots,
%      \frac{\sigma_k^2}{\lambda^2+\lambda\sigma_k^2}\right) \\
%\label{eq:s2}
%   \hat S_k &=& \mbox{Diag}\left(
%      \frac{\sigma_{k+1}^2}{\lambda^2+\lambda\sigma_{k+1}^2},
%      \frac{\sigma_{k+2}^2}{\lambda^2+\lambda\sigma_{k+2}^2},            \ldots,
%      \frac{\sigma_{r}^2}{\lambda^2+\lambda\sigma_{r}^2}
%    \right).
%\end{eqnarray*}
%
\end{lemma}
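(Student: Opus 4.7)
The plan is to apply the Woodbury inverse formula twice, once to $(A_k^T A_k + \lambda I)$ and once to $(A^T A + \lambda I)$, and then just plug the resulting expressions into the closed-form solutions.

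For the first identity, I would start from $A_k^T A_k = V_k \Sigma_k^2 V_k^T$ and write
\[
A_k^T A_k + \lambda I = \lambda I + V_k \Sigma_k^2 V_k^T.
\]
Apply the Woodbury formula with $D = \lambda I$, $P = V_k$, $T = \Sigma_k^2$, $R = V_k^T$. The key simplification is that $V_k^T D^{-1} V_k = \lambda^{-1} V_k^T V_k = \lambda^{-1} I$ because $V_k$ has orthonormal columns, so the inner inverse $(T^{-1} + R D^{-1} P)^{-1} = (\Sigma_k^{-2} + \lambda^{-1} I)^{-1}$ is diagonal with entries $\lambda \sigma_s^2/(\lambda + \sigma_s^2)$. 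Collecting the $\lambda^{-2}$ prefactor then yields exactly the claimed diagonal $S_k$ with entries $\sigma_s^2/(\lambda^2 + \lambda \sigma_s^2)$.

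For the second identity, the cleanest route is to apply the same Woodbury computation, but now to the full $A^T A = V \Sigma^2 V^T = V_k \Sigma_k^2 V_k^T + \hat{V}_k \hat{\Sigma}_k^2 \hat{V}_k^T$, which is again of the form $\lambda I + V \Sigma^2 V^T$ with $V$ having orthonormal columns. The identical computation yields
\[
(A^T A + \lambda I)^{-1} = \lambda^{-1} I - V_k S_k V_k^T - \hat{V}_k \hat{S}_k \hat{V}_k^T,
\]
where $S_k$ is as before and $\hat{S}_k$ has entries $\sigma_s^2/(\lambda^2 + \lambda \sigma_s^2)$ for $s = k+1,\dots,r$. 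Substituting $\lambda^{-1} I - V_k S_k V_k^T = (A_k^T A_k + \lambda I)^{-1}$ from the first identity gives equation (2).

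The three ``implies'' statements are then immediate substitutions: $\bar{x} = (A^T A + \lambda I)^{-1} A^T b$ gives the first, while $\tilde{x_1}$ and $\hat{x_1}$ are obtained by multiplying the first identity against $A_k^T b$ and $A^T b$ respectively. There is no real obstacle here; the only thing to watch is correctly tracking the orthonormality of $V_k$ (and of full $V$) so that the inner matrix in Woodbury collapses to a diagonal, and keeping the index ranges for $S_k$ versus $\hat{S}_k$ straight when the splitting is performed.
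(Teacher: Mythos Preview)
Your proof is correct and follows the same Woodbury-based strategy as the paper. For identity \eqref{eq:lemma_error_bnd1_eqn2} there is a minor variation worth noting: the paper writes $(A^T A + \lambda I) = (A_k^T A_k + \lambda I) + \hat{V}_k \hat{\Sigma}_k^2 \hat{V}_k^T$ and applies Woodbury with $D = A_k^T A_k + \lambda I$, then uses the orthogonality $\hat{V}_k^T V_k = 0$ together with the first identity to reduce $\hat{V}_k^T D^{-1} \hat{V}_k$ to $\lambda^{-1} I$. Your route---applying Woodbury once to the full $\lambda I + V_r \Sigma_r^2 V_r^T$ and then block-splitting the resulting diagonal into the $S_k$ and $\hat{S}_k$ pieces---is slightly more economical, since it simply reuses the first computation verbatim with $r$ in place of $k$ and avoids the extra orthogonality step. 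Both arguments arrive at the same formula, and the ``implies'' statements are handled identically.
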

\begin{proof}
The proof follows by the use of the Woodbury inverse formula \eqref{eq:woodbury_inverse_formula}:
\begin{equation*}
(PTR + D)^{-1} = D^{-1} - D^{-1} P (R D^{-1} P + T^{-1})^{-1} R D^{-1}.
\end{equation*}
We match this with $(A^T_k A_k + \lambda I)^{-1} = (V_k \Sigma_k^2 V^T_k + \lambda I)^{-1}$ 
to get $P = V_k$, $R = V^T_k$, $T= \Sigma_k^2$, and $D = \lambda I$:
\begin{eqnarray*}
(A^T_k A_k + \lambda I)^{-1} &=& \lambda^{-1} I - \lambda^{-1} V_k (V^T_k \lambda^{-1} V_k + \Sigma_k^{-2})^{-1} V^T_k \lambda^{-1} = \lambda^{-1} I - \lambda^{-2} V_k \left( \Sigma_k^{-2} + \lambda^{-1} I \right)^{-1} V_k^T \\
& = & \lambda^{-1} I - \lambda^{-2} V_k \left( \Diag(\sigma_1^{-2},\dots,\sigma_k^{-2}) + \lambda^{-1} I \right)^{-1} V_k^T \\ 
& = & \lambda^{-1} I - \lambda^{-2} V_k \Diag(\sigma_1^{-2} + \lambda^{-1},\dots,\sigma_k^{-2} + \lambda^{-1})^{-1} V_k^T \\ 
& = & \lambda^{-1} I - \lambda^{-2} V_k \Diag\left((\sigma_1^{-2} + \lambda^{-1})^{-1},\dots,(\sigma_k^{-2} + \lambda^{-1})^{-1}\right) V_k^T \\ 
& = & \lambda^{-1} I - \lambda^{-2} V_k \Diag\left(\frac{\lambda \sigma_1^2}{\lambda + \sigma_1^2},\dots,\frac{\lambda \sigma_k^2}{\lambda + \sigma_k^2}\right) V_k^T \\ 
& = & \lambda^{-1} I - V_k \Diag\left(\frac{\sigma_1^2}{\lambda^2 + \lambda \sigma_1^2},\dots,\frac{\sigma_k^2}{\lambda^2 + \lambda \sigma_k^2}\right) V_k^T = \lambda^{-1} I - V_k S_k V_k^T ,
\end{eqnarray*}
which proves \eqref{eq:lemma_error_bnd1_eqn1}.

\vspace{2.mm}
For \eqref{eq:lemma_error_bnd1_eqn2}, we have:
\begin{equation*}
(A^T A + \lambda I)^{-1} = (A^T_k A_k + \hat{V}_k \hat{\Sigma}_k^2 \hat{V}_k^T + \lambda I)^{-1} = (\hat{V}_k \hat{\Sigma}_k^2 \hat{V}_k^T + \M)^{-1} ,
\end{equation*}
with $\M = A^T_k A_k + \lambda I$. Using Woodbury matrix formula:
\begin{equation*}
(\hat{V}_k \hat{\Sigma}_k^2 \hat{V}_k^T + \M)^{-1} = \M^{-1} - \M^{-1} \hat{V}_k \left( \hat{\Sigma}_k^{-2} + \hat{V}_k^T \M^{-1} \hat{V}_k \right)^{-1} \hat{V}_k^T \M^{-1} .
\end{equation*} 
Now, by $\eqref{eq:lemma_error_bnd1_eqn1}$ we have $\M^{-1} = \lambda^{-1} I - V_k S_k V_k^T$ and 
by orthogonality we have $\hat{V}_k^T V_k = 0$:
\begin{eqnarray*}
 \hat{V}_k^T \M^{-1} &=& \hat{V}_k^T (\lambda^{-1} I - V_k S_k V_k^T) = \lambda^{-1} \hat{V}_k^T \\
 \M^{-1} \hat{V}_k &=& (\lambda^{-1} I - V_k S_k V_k^T) \hat{V}_k  = \lambda^{-1} \hat{V}_k .
\end{eqnarray*}
Thus:
\begin{eqnarray*}
(A^T A + \lambda I)^{-1} &=& \M^{-1} - \M^{-1} \hat{V}_k \left( \hat{\Sigma}_k^{-2} + \hat{V}_k^T \M^{-1} \hat{V}_k \right)^{-1} \hat{V}_k^T \M^{-1} \\
&=& \M^{-1} - \lambda^{-1} \hat{V}_k \left( \hat{\Sigma}_k^{-2} + \hat{V}_k^T \lambda^{-1} \hat{V}_k \right)^{-1} \lambda^{-1} \hat{V}_k^T \\
&=&  \M^{-1} - \lambda^{-2} \hat{V}_k \left( \hat{\Sigma}_k^{-2} + \lambda^{-1} I \right)^{-1} \hat{V}_k^T \\
&=& \M^{-1} - \lambda^{-2} \hat{V}_k \Diag\left(\frac{\lambda + \sigma_{k+1}^2}{\lambda \sigma_{k+1}^2} ,\dots,\frac{\lambda + \sigma_r^2}{\lambda \sigma_r^2} \right)^{-1} \hat{V}_k^T \\
&=& \M^{-1} - \lambda^{-2} \hat{V}_k \Diag\left( \frac{\lambda \sigma_{k+1}^2}{\lambda + \sigma_{k+1}^2},\dots,\frac{\lambda \sigma_r^2}{\lambda + \sigma_r^2} \right) \hat{V}_k^T \\
&=& (A^T_k A_k + \lambda I)^{-1} - \hat{V}_k \Diag \left( \frac{\sigma_1^2}{\lambda^2 + \lambda \sigma_{k+1}^2},\dots,\frac{\sigma_r^2}{\lambda^2 + \lambda \sigma_r^2}  \right) \hat{V}_k^T \\
&=& (A^T_k A_k + \lambda I)^{-1} - \hat{V}_k \hat{S}_k \hat{V}_k^T ,
\end{eqnarray*}
which proves \eqref{eq:lemma_error_bnd1_eqn2}.

\noindent
Equations \eqref{eq:lemma_error_bnd1_eqn1} and \eqref{eq:lemma_error_bnd1_eqn2} imply that:
\begin{eqnarray*}
&&\bar{x} = (A^T A + \lambda I)^{-1} A^T b = \left( (A^T_k A_k + \lambda I)^{-1} - \hat{V}_k \hat{S}_k \hat{V}_k^T \right) A^T b ,\\
&&\tilde{x_1} = (A_k^T A_k + \lambda I)^{-1} A_k^T b = \left( \lambda^{-1} I - V_k S_k V_k^T \right) A_k^T b ,\\
&&\hat{x_1} = (A_k^T A_k + \lambda I)^{-1} A^T b = \left( \lambda^{-1} I - V_k S_k V_k^T \right) A^T b.
\end{eqnarray*}
\end{proof}

\noindent
Now we show that $\tilde{x_3}$ (involving the inversion of a $k \times k$ matrix) 
has the same solution as $\tilde{x_1}$ and derive the expression for the difference between 
$\tilde{x_1}$ and $\hat{x_1}$.
\begin{lemma}
\label{lem:lemma_tildexs}
Let $\bar{x}$ be the solution of \eqref{eq:tikhonov_true_solution}, $\tilde{x_1}$ the 
solution of \eqref{eq:tilde_x1}, $\hat{x_1}$ the solution of \eqref{eq:hat_x1} and 
$\tilde{x_3}$ the solution of \eqref{eq:tilde_x3}. Then, we have:
\begin{equation}
\label{eq:lemma_tildexs_eqn1}
\tilde{x_3} = \tilde{x_1},
\end{equation}
and
\begin{equation} 
\label{eq:lemma_tildexs_eqn2}
\hat{x_1} - \tilde{x_1}  = \lambda^{-1} \left(A^T - A_k^T\right)b = \lambda^{-1} \hat{A_k}^T b.
\end{equation}
\end{lemma}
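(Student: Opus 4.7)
The plan is to use Lemma~\ref{lemma_error_bnd1} as the main engine: it already supplies the spectral form of $(A_k^T A_k + \lambda I)^{-1}$, so both claims reduce to elementary manipulations with diagonal matrices together with the orthogonality relation $V_k^T \hat V_k = 0$. Since everything lives in the span of the columns of $V_k$ or its complement, I expect no analytic difficulty at all; the work is purely bookkeeping.

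For the first identity $\tilde{x_3}=\tilde{x_1}$, I would first observe that $A_k^T b = V_k \Sigma_k U_k^T b$, so that substituting the formula $(A_k^T A_k + \lambda I)^{-1} = \lambda^{-1} I - V_k S_k V_k^T$ from \eqref{eq:lemma_error_bnd1_eqn1} into the definition \eqref{eq:tilde_x1} gives
\begin{equation*}
\tilde{x_1} = \bigl(\lambda^{-1} I - V_k S_k V_k^T\bigr)\, V_k \Sigma_k U_k^T b = V_k\bigl(\lambda^{-1} I - S_k\bigr)\Sigma_k U_k^T b,
\end{equation*}
using $V_k^T V_k = I$. A one-line diagonal computation shows $\lambda^{-1} - \sigma_s^2/(\lambda^2 + \lambda \sigma_s^2) = 1/(\lambda + \sigma_s^2)$, so
\begin{equation*}
\tilde{x_1} = V_k \,\Diag\!\bigl(\sigma_s/(\sigma_s^2 + \lambda)\bigr)\, U_k^T b,
\end{equation*}
which is exactly $V_k (\Sigma_k^2 + \lambda I)^{-1}\Sigma_k U_k^T b = \tilde{x_3}$ from \eqref{eq:tilde_x3}.

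For the second identity, the key is that $\tilde{x_1}$ and $\hat{x_1}$ use the \emph{same} inverse operator applied to different right-hand sides, so I would simply subtract:
\begin{equation*}
\hat{x_1} - \tilde{x_1} = \bigl(\lambda^{-1} I - V_k S_k V_k^T\bigr)\bigl(A^T - A_k^T\bigr) b.
\end{equation*}
Now $A^T - A_k^T = \hat A_k^{\,T} = \hat V_k \hat\Sigma_k \hat U_k^T$, and the orthogonality $V_k^T \hat V_k = 0$ recorded earlier causes the $V_k S_k V_k^T$ term to annihilate $\hat V_k \hat\Sigma_k \hat U_k^T b$. Only $\lambda^{-1}(A^T - A_k^T)b = \lambda^{-1}\hat A_k^{\,T} b$ survives, which is the desired identity.

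The only potential pitfall is the diagonal simplification in the first part (keeping track of which indices $S_k$ ranges over and matching it to the $k$-dimensional identity); once that algebra is written out carefully, both claims follow in a few lines from Lemma~\ref{lemma_error_bnd1} and the orthogonality between $V_k$ and $\hat V_k$. No new tools, no Woodbury re-invocation, and no dimensional subtleties should arise.
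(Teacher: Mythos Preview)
Your proposal is correct and follows essentially the same approach as the paper: both parts rely on Lemma~\ref{lemma_error_bnd1} to write $(A_k^T A_k + \lambda I)^{-1} = \lambda^{-1} I - V_k S_k V_k^T$, then use $V_k^T V_k = I$ together with the diagonal identity $\lambda^{-1} - \sigma_s^2/(\lambda^2 + \lambda\sigma_s^2) = 1/(\lambda + \sigma_s^2)$ for the first claim, and the orthogonality $V_k^T \hat V_k = 0$ to kill the $V_k S_k V_k^T$ contribution for the second. The only cosmetic difference is that the paper keeps the symbol $A_k^T b$ a bit longer (invoking $V_k V_k^T A_k^T b = A_k^T b$) rather than immediately writing it as $V_k \Sigma_k U_k^T b$, but the argument is identical.
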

\begin{proof}
First note that:
\begin{equation*}
V_k V_k^T A_k^T b = V_k V_k^T V_k \Sigma_k U_k^T b = V_k \Sigma_k U_k^T b = A_k^T b.
\end{equation*}
Next, we expand:
\begin{eqnarray*}
\tilde{x_1} &=& \left( \lambda^{-1} I - V_k S_k V_k^T \right) A_k^T b = \lambda^{-1} A_k^T b - V_k S_k V_k^T A_k^T b = \lambda^{-1} V_k V_k^T A_k^T b - V_k S_k V_k^T A_k^T b \\
&=& V_k \left( \lambda^{-1} I - S_k \right) V_k^T A_k^T b = V_k \left( \lambda^{-1} I - \Diag\left( \frac{\sigma_{s}^2}{\lambda^2+\lambda\sigma_{s}^2} \right)  \right) V_k^T A_k^T b \\
&=& V_k \Diag\left( \frac{1}{\lambda} - \frac{\sigma_{s}^2}{\lambda^2+\lambda\sigma_{s}^2} \right)  V_k^T A_k^T b = V_k \Diag\left( \frac{(\sigma_{s}^2 + \lambda) - \sigma_{s}^2}{\lambda 
(\sigma_{s}^2 + \lambda)} \right) V_k^T A_k^T b \\
&=& V_k \Diag\left( \frac{1}{\sigma_{s}^2 + \lambda} \right) V_k^T A_k^T b = 
V_k (\Sigma_k^2 + \lambda I)^{-1} V_k^T A_k^T b = \tilde{x_3} ,
\end{eqnarray*}
which proves \eqref{eq:lemma_tildexs_eqn1}. Next, for the difference between $\tilde{x_1}$ and 
$\hat{x_1}$ we have:
\begin{eqnarray*}
&& \tilde{x_1} = \left( A_k^T A_k + \lambda I \right)^{-1} A_k^T b = \left( \lambda^{-1} I - V_k S_k V_k^T \right) A_k^T b = \lambda^{-1} A_k^T b - V_k S_k V_k^T A_k^T b  ,\\
&& \hat{x_1} = \left( A_k^T A_k + \lambda I \right)^{-1} A^T b = \left( \lambda^{-1} I - V_k S_k V_k^T \right) A^T b = \lambda^{-1} A^T b - V_k S_k V_k^T A^T b.
\end{eqnarray*}
Note that:
\begin{equation*}
V_k S_k V_k^T A^T b = V_k S_k V_k^T (V_k \Sigma_k U_k^T + \hat{V}_k \hat{\Sigma}_k \hat{U}_k^T) b = 
V_k S_k V_k^T A_k^T b.
\end{equation*}
Hence:
\begin{equation*}
\hat{x_1} - \tilde{x_1} = \lambda^{-1} A^T b - \lambda^{-1} A_k^T b = \lambda^{-1} (A^T - A_k^T) b = \lambda^{-1} \hat{A_k}^T b,
\end{equation*}
which proves \eqref{eq:lemma_tildexs_eqn2}.
\end{proof}

\vspace{2.mm}
By the result of Lemma \ref{lem:lemma_tildexs}, the only solutions which  
differ from each other are $\tilde{x_1}$ and $\hat{x_1}$. We now analyze these two 
solutions with respect to the true solution $\bar{x}$. 
\begin{proposition}
Let $\bar{x}$ be the solution of \eqref{eq:tikhonov_true_solution} and $\tilde{x_1}$ the 
solution of \eqref{eq:tilde_x1}. Then:
\begin{equation}
\label{eq:lemma_tilde1_eq1}
||\bar{x} - \tilde{x_1}||_2 \leq \frac{\sigma_{k+1}}{\lambda + \sigma^2_{k+1}} ||b||_2 ,
\end{equation}
and 
\begin{equation}
\label{eq:lemma_tilde1_eq2}
\tilde{x_1} = V_k V_k^T \bar{x} .
\end{equation}
\end{proposition}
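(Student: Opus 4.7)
The plan is to leverage Lemma \ref{lemma_error_bnd1} directly, since it already gives closed-form SVD expressions for both $(A^T A + \lambda I)^{-1}$ and $(A^T_k A_k + \lambda I)^{-1}$. My strategy is to substitute these into $\bar{x}$ and $\tilde{x_1}$, subtract, and use the orthogonality $V_k^T \hat{V}_k = 0$ to collapse the difference to a single clean diagonal expression supported only on the ``tail'' singular values. From there, \eqref{eq:lemma_tilde1_eq1} is an operator-norm estimate, and the projection identity \eqref{eq:lemma_tilde1_eq2} is essentially immediate once $\bar{x}$ is written in its full SVD filter form.

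For \eqref{eq:lemma_tilde1_eq1}, I would begin by subtracting: using $\bar{x} = \bigl( (A^T_k A_k + \lambda I)^{-1} - \hat{V}_k \hat{S}_k \hat{V}_k^T \bigr) A^T b$ and $\tilde{x_1} = (A^T_k A_k + \lambda I)^{-1} A_k^T b$ from Lemma \ref{lemma_error_bnd1}, together with $A^T - A_k^T = \hat{A_k}^T = \hat{V}_k \hat{\Sigma}_k \hat{U}_k^T$, the difference becomes
\begin{equation*}
\bar{x} - \tilde{x_1} = (A^T_k A_k + \lambda I)^{-1} \hat{A_k}^T b - \hat{V}_k \hat{S}_k \hat{V}_k^T A^T b.
\end{equation*}
Substituting $(A^T_k A_k + \lambda I)^{-1} = \lambda^{-1} I - V_k S_k V_k^T$ and using $V_k^T \hat{V}_k = 0$ kills the $V_k S_k V_k^T$ contribution in the first term; in the second, $\hat{V}_k^T A^T b = \hat{\Sigma}_k \hat{U}_k^T b$ by the same orthogonality. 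Merging the two diagonal factors via $\lambda^{-1} - \sigma_s^2/(\lambda^2 + \lambda\sigma_s^2) = 1/(\lambda + \sigma_s^2)$ should then collapse everything to
\begin{equation*}
\bar{x} - \tilde{x_1} = \hat{V}_k \, \Diag\bigl( \tfrac{\sigma_s}{\lambda + \sigma_s^2} \bigr)_{s=k+1}^{r} \, \hat{U}_k^T b,
\end{equation*}
from which the bound follows by orthonormality of the columns of $\hat{V}_k$ and $\hat{U}_k$.

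The subtle step is identifying the operator norm of that diagonal factor with its $s = k+1$ entry. The function $g(\sigma) = \sigma/(\lambda + \sigma^2)$ has its unique interior maximum at $\sigma = \sqrt{\lambda}$, so $\max_{s \geq k+1} g(\sigma_s) = g(\sigma_{k+1})$ holds precisely when $\sigma_{k+1} \leq \sqrt{\lambda}$, i.e.\ $\sigma_{k+1}^2 \leq \lambda$. This is exactly the regime in which $\lambda$ damps out the truncated modes, so it is the natural setting for the stated bound and I would flag this assumption explicitly; outside that regime only the weaker universal estimate $1/(2\sqrt{\lambda})$ is available. This monotonicity check is the main conceptual obstacle.

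For \eqref{eq:lemma_tilde1_eq2}, I would expand $\bar{x} = V D U^T b$ with $D = \Diag(\sigma_s/(\sigma_s^2+\lambda))$ and split it by range: $\bar{x} = V_k D_k U_k^T b + \hat{V}_k \hat{D}_k \hat{U}_k^T b$. Applying $V_k V_k^T$ and using $V_k^T \hat{V}_k = 0$ annihilates the tail, leaving $V_k V_k^T \bar{x} = V_k D_k U_k^T b$. A direct calculation also gives $\tilde{x_1} = V_k (\Sigma_k^2 + \lambda I)^{-1} \Sigma_k U_k^T b = V_k D_k U_k^T b$, so the two coincide. Equivalently, $\tilde{x_1} \in \range(V_k)$ by inspection and $V_k V_k^T$ is the orthogonal projector onto $\range(V_k)$, so the identity simply expresses $\tilde{x_1}$ as the orthogonal projection of the true solution onto the retained right singular subspace.
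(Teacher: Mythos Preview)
Your argument for \eqref{eq:lemma_tilde1_eq1} is essentially the paper's: both routes collapse $\bar{x}-\tilde{x_1}$ to $\hat{V}_k\,\Diag\bigl(\sigma_s/(\lambda+\sigma_s^2)\bigr)\,\hat{U}_k^T b$ and then bound by the operator norm of the diagonal factor. You are in fact more careful than the paper on one point: the paper simply asserts that the maximum diagonal entry is the one at $s=k+1$, whereas you correctly observe that $g(\sigma)=\sigma/(\lambda+\sigma^2)$ peaks at $\sigma=\sqrt{\lambda}$, so this identification requires $\sigma_{k+1}\le\sqrt{\lambda}$. The paper silently assumes this; your flag is a genuine improvement in rigor.

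For \eqref{eq:lemma_tilde1_eq2} you take a cleaner route than the paper. The paper first expresses $\hat{U}_k^T b$ in terms of $\bar{x}$ via $\hat{U}_k^T b=(\hat{\Sigma}_k+\lambda\hat{\Sigma}_k^{-1})\hat{V}_k^T\bar{x}$, substitutes this back into the identity $\bar{x}=\tilde{x_1}+\hat{V}_k\,\Diag(\sigma_s/(\lambda+\sigma_s^2))\,\hat{U}_k^T b$, and watches the diagonals cancel to leave $\bar{x}=\tilde{x_1}+\hat{V}_k\hat{V}_k^T\bar{x}$. Your approach instead exploits the filter form $\bar{x}=V D U^T b$ (which the paper has already written down) together with $\tilde{x_1}=\tilde{x_3}=V_k(\Sigma_k^2+\lambda I)^{-1}\Sigma_k U_k^T b$ from Lemma~\ref{lem:lemma_tildexs}, so that both sides of \eqref{eq:lemma_tilde1_eq2} are visibly equal to $V_k D_k U_k^T b$. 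This is shorter and makes the geometric content---that $\tilde{x_1}$ is the orthogonal projection of $\bar{x}$ onto $\range(V_k)$---transparent, at the cost of invoking the $\tilde{x_1}=\tilde{x_3}$ identity. The paper's route is more self-contained but less illuminating.
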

\begin{proof}
Recall that $\tilde{x_1} = (A_k^T A_k + \lambda I)^{-1} A_k^T b$ and that 
$\bar{x} = (A^T A + \lambda I)^{-1} A^T b$. Next by Lemma \ref{lemma_error_bnd1} and using 
that $A_k \hat{V_k} = (U_k \Sigma_k V^T_k) \hat{V}_k = 0$ and $\hat{V}_k^T V_k = 0$:
\begin{eqnarray*}
(A^T A + \lambda I)^{-1} A^T &=& (A^T A + \lambda I)^{-1} (A^T_k + \hat{V}_k \hat{\Sigma}_k \hat{U}_k^T) = \left( (A^T_k A_k + \lambda I)^{-1} - \hat{V}_k \hat{S}_k \hat{V}_k^T \right) (A_k^T + \hat{V}_k \hat{\Sigma}_k \hat{U}_k^T) \\ 
&=& (A^T_k A_k + \lambda I)^{-1} A_k^T + (A^T_k A_k + \lambda I)^{-1} \hat{V}_k \hat{\Sigma}_k \hat{U}_k^T - \hat{V}_k \hat{S}_k \hat{\Sigma}_k \hat{U}_k^T \\
&=& (A^T_k A_k + \lambda I)^{-1} A_k^T + (\lambda^{-1} I - V_k S_k V_k^T)  \hat{V}_k \hat{\Sigma}_k \hat{U}_k^T - \hat{V}_k \hat{S}_k \hat{\Sigma}_k \hat{U}_k^T \\
&=& (A^T_k A_k + \lambda I)^{-1} A_k^T + \lambda^{-1} \hat{V}_k \hat{\Sigma}_k \hat{U}_k^T - \hat{V}_k \hat{S}_k \hat{\Sigma}_k \hat{U}_k^T \\
&=& (A^T_k A_k + \lambda I)^{-1} A_k^T + \hat{V}_k \left( \lambda^{-1} \hat{\Sigma}_k - \hat{S}_k \hat{\Sigma}_k \right)\hat{U}_k^T .
\end{eqnarray*}
Since $\hat{S}_k = \Diag\left(\frac{\sigma_s^2}{\lambda^2 + \lambda \sigma_s^2}\right)$ for $s=(k+1),\dots,r$:
\begin{equation*}
\lambda^{-1} \hat{\Sigma}_k - \hat{S}_k \hat{\Sigma}_k = \Diag\left( \frac{\sigma_s}{\lambda} - 
\frac{\sigma_s^3}{\lambda(\lambda + \sigma_s^2)} \right) = \Diag \left( \frac{\sigma_s}{\lambda + \sigma_s^2} \right) \quad \mbox{for} \quad s=(k+1),\dots,r .
\end{equation*}
Hence:
\begin{equation*}
(A^T A + \lambda I)^{-1} A^T = (A^T_k A_k + \lambda I)^{-1} A_k^T + \hat{V}_k \Diag\left(  \frac{\sigma_s}{\lambda + \sigma_s^2} \right) \hat{U}_k^T,
\end{equation*}
which implies:
\begin{eqnarray}
\label{eq:lemma_tilde1_innereq1}
\bar{x} &=& (A^T A + \lambda I)^{-1} A^T b = (A^T_k A_k + \lambda I)^{-1} A_k^T b +  \hat{V}_k \Diag\left(  \frac{\sigma_s}{\lambda + \sigma_s^2} \right) \hat{U}_k^T  \\
&=& \tilde{x_1} + \hat{V}_k \Diag\left(  \frac{\sigma_s}{\lambda + \sigma_s^2} \right) \hat{U}_k^T b
\end{eqnarray}
\begin{eqnarray*}
\implies ||\bar{x} - \tilde{x_1}||_2 &=& \left\| \hat{V}_k \Diag\left(  \frac{\sigma_s}{\lambda + \sigma_s^2} \right) \hat{U}_k^T b \right\| = \left\| \Diag\left(  \frac{\sigma_s}{\lambda + \sigma_s^2} \right) \hat{U}_k^T b \right\| \leq \left\| \Diag\left(  \frac{\sigma_s}{\lambda + \sigma_s^2} \right) \right\|_2 ||b||_2 \\ 
&\leq& \frac{\sigma_{k+1}}{\lambda + \sigma_{k+1}^2} ||b||_2 ,
\end{eqnarray*}
which proves \eqref{eq:lemma_tilde1_eq1}.

\vspace{2.mm}
Next, to derive \eqref{eq:lemma_tilde1_eq2}, we have:
\begin{equation*}
A^T b = (V_k \Sigma_k U_k^T)b + (\hat{V}_k \hat{\Sigma}_k \hat{U}_k^T)b,
\end{equation*}
so that 
\begin{equation*}
\hat{V}_k \hat{\Sigma}_k \hat{U}_k^T b = A^T b - (V_k \Sigma_k U_k^T) b \implies 
\hat{V}^T_k \hat{V}_k \hat{\Sigma}_k \hat{U}_k^T b = \hat{\Sigma}_k \hat{U}_k^T b = \hat{V}^T_k A^T b - 0 \implies \hat{U}_k^T b = \hat{\Sigma}_k^{-1} \hat{V}^T_k A^T b
\end{equation*}
\begin{eqnarray*}
\implies \hat{U}_k^T b &=& \hat{\Sigma}_k^{-1} \hat{V}^T_k (A^T A + \lambda I) \bar{x} = \hat{\Sigma}_k^{-1} \hat{V}^T_k (V_k \Sigma_k^2 V^T_k + \hat{V}_k \hat{\Sigma}_k^2 \hat{V}_k^T + \lambda I) \bar{x} = \hat{\Sigma}_k^{-1} \left( \hat{\Sigma}_k^2 \hat{V}_k^T + \lambda \hat{V}^T_k \right) \bar{x} \\
&=& \hat{\Sigma}_k \hat{V}_k^T \bar{x} + \lambda \hat{\Sigma}_k^{-1} \hat{V}^T_k \bar{x} = 
(\hat{\Sigma}_k + \lambda \hat{\Sigma}_k^{-1}) \hat{V}_k^T \bar{x}.
\end{eqnarray*}
Using \eqref{eq:lemma_tilde1_innereq1}, we have:
\begin{eqnarray*}
\bar{x} &=& \tilde{x_1} + \hat{V}_k \Diag \left( \frac{\sigma_s}{\lambda + \sigma_s^2} \right) \hat{U}_k^T b = \tilde{x_1} + \hat{V}_k \Diag \left( \frac{\sigma_s}{\lambda + \sigma_s^2} \right) (\hat{\Sigma}_k + \lambda \hat{\Sigma}_k^{-1}) \hat{V}_k^T \bar{x} \\
&=& \tilde{x_1} + \hat{V}_k \Diag \left( \frac{\sigma_s}{\lambda + \sigma_s^2} \right) \Diag \left(\sigma_s + \frac{\lambda}{\sigma_s}\right) \hat{V}_k^T \bar{x} = \tilde{x_1} + \hat{V}_k  \Diag \left( \frac{\sigma_s}{\lambda + \sigma_s^2} \right) \Diag \left(\frac{\sigma_s^2 + \lambda}{\sigma_s}\right) \hat{V}_k^T \bar{x} \\
&=& \tilde{x_1} + \hat{V}_k \hat{V}_k^T \bar{x} = \tilde{x_1} + (I - V_k V_k^T) \bar{x} = \tilde{x_1} + \bar{x} - V_k V_k^T \bar{x}.
\end{eqnarray*}
This proves \eqref{eq:lemma_tilde1_eq2}:
\begin{equation*}
\tilde{x_1} = V_k V_k^T \bar{x}.
\end{equation*}
\end{proof}

Next, we look at the solution $\hat{x_1} = (A_k^T A_k + \lambda I)^{-1} A^T b$. Recall that  
the difference from $\tilde{x_1}$ is that in $\hat{x_1}$, $A^T b$ is not approximated by $A_k^T b$. 
\begin{proposition}
Let $\bar{x}$ be the solution of \eqref{eq:tikhonov_true_solution} and $\hat{x_1}$ the 
solution of \eqref{eq:hat_x1}. Then:
\begin{equation}
\label{eq:prop_error_bnd1_eq1}
||\bar{x} - \hat{x_1}||_2 \leq \frac{\sigma_{k+1}^3}{\lambda^2 + \lambda \sigma^2_{k+1}} ||b||_2,
\end{equation}
and
\begin{equation}
\label{eq:prop_error_bnd1_eq2}
\frac{||\bar{x} - \hat{x_1}||_2}{||\bar{x}||_2} \leq \frac{\sigma_{k+1}^2}{\lambda}.
\end{equation}
\end{proposition}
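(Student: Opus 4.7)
The plan is to start from Lemma~\ref{lemma_error_bnd1}, which already gives an explicit formula for the operator difference $(A^TA+\lambda I)^{-1}-(A_k^TA_k+\lambda I)^{-1}=-\hat{V}_k\hat{S}_k\hat{V}_k^T$. Since $\bar{x}$ and $\hat{x_1}$ differ only by this operator applied to the common right-hand side $A^T b$, the error collapses immediately to
\begin{equation*}
\bar{x}-\hat{x_1}=-\hat{V}_k\hat{S}_k\hat{V}_k^T A^T b.
\end{equation*}
The first step I would take is to simplify $\hat{V}_k^T A^T$ using the SVD split $A^T=V_k\Sigma_k U_k^T+\hat{V}_k\hat{\Sigma}_k\hat{U}_k^T$ together with the orthogonality relations $\hat{V}_k^T V_k=0$ and $\hat{V}_k^T\hat{V}_k=I$; this yields $\hat{V}_k^T A^T=\hat{\Sigma}_k\hat{U}_k^T$, and hence
\begin{equation*}
\bar{x}-\hat{x_1}=-\hat{V}_k\,\Diag\left(\frac{\sigma_s^3}{\lambda^2+\lambda\sigma_s^2}\right)\hat{U}_k^T b \quad\mbox{for}\quad s=k+1,\dots,r.
\end{equation*}

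For the absolute bound \eqref{eq:prop_error_bnd1_eq1}, I would take $\ell_2$ norms, invoke the isometry lemma near the start of Section~3 to handle $\hat{V}_k$ (equality) and $\hat{U}_k^T$ (contraction), and bound the diagonal matrix by its largest entry in operator norm. The small subtlety is to verify that $f(\sigma)=\sigma^3/[\lambda(\lambda+\sigma^2)]$ is monotone increasing in $\sigma\ge 0$, so that the maximum over $s\ge k+1$ is attained at $s=k+1$; this is a one-line derivative check giving $f'(\sigma)=\sigma^2(3\lambda+\sigma^2)/[\lambda(\lambda+\sigma^2)^2]>0$.

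For the relative bound \eqref{eq:prop_error_bnd1_eq2}, I would expand both $\bar{x}$ and $\bar{x}-\hat{x_1}$ in the right-singular basis. Using the filter-factor formula $\bar{x}=V D U^T b$ with $D=\Diag(\sigma_i/(\sigma_i^2+\lambda))$ stated just before the proposition, each tail coefficient $\langle v_s,\bar{x}-\hat{x_1}\rangle$ equals the corresponding coefficient $\langle v_s,\bar{x}\rangle$ multiplied by the cancellation factor
\begin{equation*}
\frac{\sigma_s^3}{\lambda(\lambda+\sigma_s^2)}\cdot\frac{\sigma_s^2+\lambda}{\sigma_s}=\frac{\sigma_s^2}{\lambda}.
\end{equation*}
Squaring, summing over $s=k+1,\dots,r$, and using $\sigma_s\le\sigma_{k+1}$ on the tail together with the fact that $\|\bar{x}\|_2^2$ dominates its own tail sum, then yields $\|\bar{x}-\hat{x_1}\|_2^2\le(\sigma_{k+1}^2/\lambda)^2\|\bar{x}\|_2^2$.

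The main obstacle is really just bookkeeping: keeping straight which diagonal blocks are indexed over $s=1,\dots,k$ versus $s=k+1,\dots,r$, and correctly simplifying products of $\hat{V}_k,\hat{U}_k$ against the two SVD splits $A=U_k\Sigma_k V_k^T+\hat{U}_k\hat{\Sigma}_k\hat{V}_k^T$ and its transpose. The clean factor $\sigma_{k+1}^2/\lambda$ in \eqref{eq:prop_error_bnd1_eq2} is not obvious a priori but falls out precisely from the algebraic cancellation above combined with the $\sigma$-monotonicity of $f$.
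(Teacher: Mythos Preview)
Your proposal is correct and essentially follows the paper's own proof. For \eqref{eq:prop_error_bnd1_eq1} you use exactly the same route: Lemma~\ref{lemma_error_bnd1} gives $\bar{x}-\hat{x_1}=-\hat{V}_k\hat{S}_k\hat{V}_k^T A^T b$, the orthogonality $\hat{V}_k^T A^T=\hat{\Sigma}_k\hat{U}_k^T$ collapses this to $-\hat{V}_k\hat{S}_k\hat{\Sigma}_k\hat{U}_k^T b$, and the bound follows from the largest diagonal entry (your monotonicity check of $f(\sigma)=\sigma^3/[\lambda(\lambda+\sigma^2)]$ is a nice touch the paper omits). For \eqref{eq:prop_error_bnd1_eq2} the paper substitutes $A^T b=(A^TA+\lambda I)\bar{x}$ and simplifies to $\bar{x}-\hat{x_1}=-\hat{V}_k\hat{S}_k(\hat{\Sigma}_k^2+\lambda I)\hat{V}_k^T\bar{x}$, then bounds the operator norm; your coordinate-wise expansion in the right-singular basis is exactly this identity read componentwise, with the same cancellation $\hat{S}_k(\hat{\Sigma}_k^2+\lambda I)=\Diag(\sigma_s^2/\lambda)$ appearing as your ratio computation.
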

\begin{proof}
We use lemma \ref{lemma_error_bnd1} to relate $\bar{x}$ to $\hat{x_1}$.
\begin{eqnarray}
\bar{x} &=& (A^T A + \lambda I)^{-1} A^T b = \left( (A^T_k A_k + \lambda I)^{-1} - \hat{V}_k \hat{S}_k \hat{V}_k^T  \right) A^T b \\
\label{eq:prop_error_bnd1_drv1}
&=& \hat{x_1} - \hat{V}_k \hat{S}_k \hat{V}_k^T A^T b = \hat{x_1} - \hat{V}_k \hat{S}_k \hat{V}_k^T \left( A^T_k + \hat{V}_k \hat{\Sigma}_k \hat{U}_k^T \right) b 
= \hat{x_1} - \hat{V}_k \hat{S}_k \hat{\Sigma}_k \hat{U}_k^T b,
\end{eqnarray}
where the last equality follows from $\hat{V}^T_k A_k^T = 0$ and $\hat{V}^T_k \hat{V} = I$. 
Thus, we have:
\begin{eqnarray*}
||\bar{x} - \hat{x_1}||_2 = ||\hat{V}_k \hat{S}_k \hat{\Sigma}_k \hat{U}_k^T b||_2 = ||\hat{S}_k \hat{\Sigma}_k \hat{U}_k^T b||_2 \leq ||\hat{S}_k \hat{\Sigma}_k||_2 ||\hat{U}_k^T b||_2 \leq 
||\hat{S}_k \hat{\Sigma}_k||_2 ||b||_2 .
\end{eqnarray*}
Now from lemma \ref{lemma_error_bnd1}:
\begin{eqnarray*}
&& \hat{S}_k \hat{\Sigma}_k = \Diag\left( \frac{\sigma_{k+1}^3}{\lambda^2+\lambda\sigma_{k+1}^2}, \frac{\sigma_{k+3}^2}{\lambda^2+\lambda\sigma_{k+2}^2}, \ldots, \frac{\sigma_{r}^3}{\lambda^2+\lambda\sigma_{r}^2} \right) \\
& \implies\ & ||\hat{S}_k \hat{\Sigma}_k||_2 = \max (\hat{S}_k \hat{\Sigma}_k) = \frac{\sigma_{k+1}^3}{\lambda^2+\lambda\sigma_{k+1}^2} \\
& \implies\ & ||\hat{S}_k \hat{\Sigma}_k||_2 ||b||_2 = \frac{\sigma_{k+1}^3}{\lambda^2+\lambda\sigma_{k+1}^2} ||b||_2.
\end{eqnarray*}
So we obtain the bound \eqref{eq:prop_error_bnd1_eq1}:
\begin{equation*}
||\bar{x} - \hat{x_1}||_2 \leq ||\hat{S}_k \hat{\Sigma}_k||_2 ||b||_2 = \frac{\sigma_{k+1}^3}{\lambda^2+\lambda\sigma_{k+1}^2} ||b||_2.
\end{equation*}

\vspace{2.mm}
\noindent
In order to obtain \eqref{eq:prop_error_bnd1_eq2}, we need to get rid of the $||b||_2$ term. 
We appeal back to \eqref{eq:prop_error_bnd1_drv1}:
\begin{eqnarray*}
\bar{x} &=& \hat{x_1} - \hat{V}_k \hat{S}_k \hat{V}_k^T A^T b = \hat{x_1} - \hat{V}_k \hat{S}_k \hat{V}_k^T \left( A^T A + \lambda I \right) \bar{x}  \\
&=& \hat{x_1} - \hat{V}_k \hat{S}_k \hat{V}_k^T \left( V_k \Sigma_k^2 V^T_k + \hat{V}_k \hat{\Sigma}^2_k \hat{V}^T_k  + \lambda I \right) \bar{x} = 
\hat{x_1} - \hat{V}_k \hat{S}_k \left( \hat{\Sigma_k^2} + \lambda I \right) \hat{V}_k^T \bar{x}.
\end{eqnarray*}
It follows that:
\begin{eqnarray*}
&& ||\bar{x} - \hat{x_1}||_2 = ||\hat{V}_k \hat{S}_k \left( \hat{\Sigma_k^2} + \lambda I \right) \hat{V}_k^T \bar{x}||_2 \leq ||\hat{V}_k \hat{S}_k \left( \hat{\Sigma_k^2} + \lambda I \right) \hat{V}_k^T||_2 ||\bar{x}||_2 = || \hat{S}_k \left( \hat{\Sigma_k^2} + \lambda I \right) ||_2 ||\bar{x}||_2 \\
& \implies\ & \frac{||\bar{x} - \hat{x_1}||_2}{||\bar{x}||_2} \leq || \hat{S}_k \left( \hat{\Sigma_k^2} + \lambda I \right) ||_2 \leq || \hat{S}_k ||_2 || \left( \hat{\Sigma_k^2} + \lambda I \right) ||_2 = 
\frac{\sigma_{k+1}^2}{\lambda^2 + \lambda \sigma_{k+1}^2} (\sigma_{k+1}^2 + \lambda),
\end{eqnarray*}
which simplifies to:
\begin{equation*}
\frac{||\bar{x} - \hat{x_1}||_2}{||\bar{x}||_2} \leq \frac{\sigma_{k+1}^2}{\lambda}.
\end{equation*}
\end{proof}

Let us now recall some results we have derived. First of all, we have shown that $\tilde{x_1}$, 
$\tilde{x_2}$ and $\tilde{x_3}$ lead to the same solution. Numerically, however, one may still observe 
some differences if they are not run to convergence. On the other hand, 
$\tilde{x_1}$ and $\hat{x_1}$ differ from each other and have the following absolute error bounds with 
respect to the true solution $\bar{x}$:
\begin{eqnarray*}
&& ||\bar{x} - \tilde{x_1}||_2 \leq \frac{\sigma_{k+1}}{\lambda+\sigma_{k+1}^2} ||b||_2 ;\\
&& ||\bar{x} - \hat{x_1}||_2 \leq \frac{\sigma_{k+1}^3}{\lambda \left( \lambda + \sigma_{k+1}^2 \right)} ||b||_2 \\
%%& \implies\ & ||\bar{x} - \hat{x_1}||_2 \leq \frac{\sigma_{k+1}^2}{\lambda} ||\bar{x} - \tilde{x_1}||_2.
\end{eqnarray*}
Recall that the difference between the two is in the right hand side: $\hat{x_1}$ uses the 
un-approximated right hand side, or at least one computed with the wavelet transformed matrix 
(i.e. $A^T b \approx W^{-1} M^T b$). We mention again that one operation with a large 
$A$ or $M$ is not prohibitively expensive as it can be done by splitting the matrix 
into small enough blocks. The plot below in Figure \ref{fig:tildex_hatx_ub_comp_fig}
gives us a sense of how the upper bounds behave. We plot the fraction: 
\begin{equation}
\label{eq:tildex_hatx_ub_comp_eq}
\beta = \frac{\frac{\sigma_{k+1}^3}{\lambda \left( \lambda + \sigma_{k+1}^2 \right)} - \frac{\sigma_{k+1}}{\lambda+\sigma_{k+1}^2}}{\left| \frac{\sigma_{k+1}}{\lambda+\sigma_{k+1}^2} \right|}
\end{equation}
as a function of the value of $\sigma_{k+1}$ for two different choices of $\lambda$. 
The fraction \eqref{eq:tildex_hatx_ub_comp_eq} is simply a relative difference between 
the two upper bounds for the error of the approximate solutions $\hat{x_1}$ and $\tilde{x_1}$. 
From Figure \ref{fig:tildex_hatx_ub_comp_fig}, we may observe 
that the difference fraction is negative (indicating a lower upper bound 
error for $\hat{x_1}$) when the value of $\sigma_{k+1}$ is sufficiently small. However, 
if $k$ is not large enough for $\sigma_{k+1}$ to be sufficiently small then the upper 
bound of $\hat{x_1}$ will be worse than that of $\tilde{x_1}$.
Another observation about the solution $\hat{x_1}$ compared to $\tilde{x_1}$ (and the other 
solutions equivalent to it) is that $\hat{x_1}$ for the same choice of $\lambda$ is expected 
to have a larger norm:

\begin{lemma}
Let $\tilde{x_1}$ be the solution of \eqref{eq:tilde_x1} and $\hat{x_1}$ the solution of 
\eqref{eq:hat_x1} for a fixed value of $\lambda$. 
Then, we have that $||\tilde{x_1}||_2 \leq ||\hat{x_1}||_2$.
\label{lemma:norms_of_x_1}
\end{lemma}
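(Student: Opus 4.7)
The plan is to exhibit $\hat{x_1}$ as the orthogonal sum of $\tilde{x_1}$ and a correction vector lying in the complementary subspace $\range \hat{V}_k$, and then conclude by the Pythagorean identity.

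First, I would use Lemma \ref{lem:lemma_tildexs} to write
\begin{equation*}
\hat{x_1} = \tilde{x_1} + \lambda^{-1}\bigl(A^T - A_k^T\bigr) b = \tilde{x_1} + \lambda^{-1}\hat{V}_k \hat{\Sigma}_k \hat{U}_k^T b,
\end{equation*}
so that the difference $\hat{x_1} - \tilde{x_1}$ sits in $\range \hat{V}_k$. Second, I would use the equivalence $\tilde{x_1} = \tilde{x_3} = V_k (\Sigma_k^2 + \lambda I)^{-1} \Sigma_k U_k^T b$ from \eqref{eq:lemma_tildexs_eqn1}, which makes it clear that $\tilde{x_1} \in \range V_k$.

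Third, since the columns of $V_k$ and $\hat{V}_k$ are mutually orthonormal blocks of the orthogonal matrix $V$, the subspaces $\range V_k$ and $\range \hat{V}_k$ are orthogonal. Hence the two summands in the decomposition of $\hat{x_1}$ are orthogonal, and by the Pythagorean identity
\begin{equation*}
\|\hat{x_1}\|_2^2 = \|\tilde{x_1}\|_2^2 + \lambda^{-2}\|\hat{V}_k \hat{\Sigma}_k \hat{U}_k^T b\|_2^2 \geq \|\tilde{x_1}\|_2^2,
\end{equation*}
which gives $\|\tilde{x_1}\|_2 \leq \|\hat{x_1}\|_2$.

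There is no real obstacle here; the only point to be careful about is invoking the correct representation of $\tilde{x_1}$ (namely the $\tilde{x_3}$ form) so that the inclusion $\tilde{x_1} \in \range V_k$ is manifest, after which orthogonality of $V_k$ and $\hat{V}_k$ and Pythagoras finish the argument.
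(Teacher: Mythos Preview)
Your proposal is correct and follows essentially the same approach as the paper: decompose $\hat{x_1} = \tilde{x_1} + \lambda^{-1}\hat{A}_k^T b$ via Lemma~\ref{lem:lemma_tildexs}, show the two summands are orthogonal, and apply Pythagoras. The only cosmetic difference is how orthogonality is established: you invoke the $\tilde{x_3}$ representation to place $\tilde{x_1}\in\range V_k$ and then use $\range V_k \perp \range \hat{V}_k$, whereas the paper computes the cross term $\tilde{x_1}^T\hat{A}_k^T b$ directly by expanding $\hat{A}_k(\lambda^{-1}I - V_k S_k V_k^T)A_k^T b$ and using $\hat{A}_k A_k^T = \hat{A}_k V_k = 0$.
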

\begin{proof}
Recall that $A = A_k + \hat{A_k}$ and 
\begin{equation*}
\tilde{x_1} = (A_k^T A_k + \lambda I)^{-1} A_k^T b \quad \mbox{;} \quad \hat{x_1} = (A_k^T A_k + \lambda I)^{-1} A^T b .
\end{equation*}
Now by Lemma \ref{lem:lemma_tildexs}:
\begin{eqnarray*}
\hat{x_1} = \tilde{x_1} + \lambda^{-1} \hat{A_k^T} b.
\end{eqnarray*}
Thus, the norms are related as:
\begin{equation*}
||\hat{x_1}||_2^2 = ||\tilde{x_1}||_2^2 + 2 \lambda^{-1} \tilde{x_1}^T \hat{A_k^T} b + ||\hat{A_k^T} b||_2^2 ,
\end{equation*}
where the middle term is zero as we now show. Note that $\hat{A_k} A_k^T = \hat{A_k} V_k = 0$ and:
\begin{equation*}
\left(\tilde{x_1}^T \hat{A_k^T}\right)^T = \hat{A_k} \tilde{x_1} = \hat{A_k} (A_k^T A_k + \lambda I)^{-1} A_k^T b = \hat{A_k} (\lambda^{-1} I - V_k S_k V_k^T) A_k^T b = \lambda^{-1} \hat{A_k} A_k^T b + \hat{A_k} V_k S_k V_k^T A_k^T b = 0.
\end{equation*}
Thus:
\begin{equation*}
||\hat{x_1}||_2^2 = ||\tilde{x_1}||_2^2 + ||\hat{A_k^T} b||_2^2 \implies ||\tilde{x_1}||_2 \leq ||\hat{x_1}||_2 .
\end{equation*}
\end{proof}
Thus, when using $\hat{x_1}$ as an estimate for $\bar{x}$ we typically would like to take a 
larger value of $\lambda$ to obtain a solution with similar norm to that of $\tilde{x_1}$. If we use 
the same $\lambda$ for $\hat{x_1}$ and $\bar{x}$, we will find that the components of the solution 
of $\bar{x}$ have larger amplitudes.
\begin{figure*}[h!]
\centerline{
\includegraphics[scale=0.4]{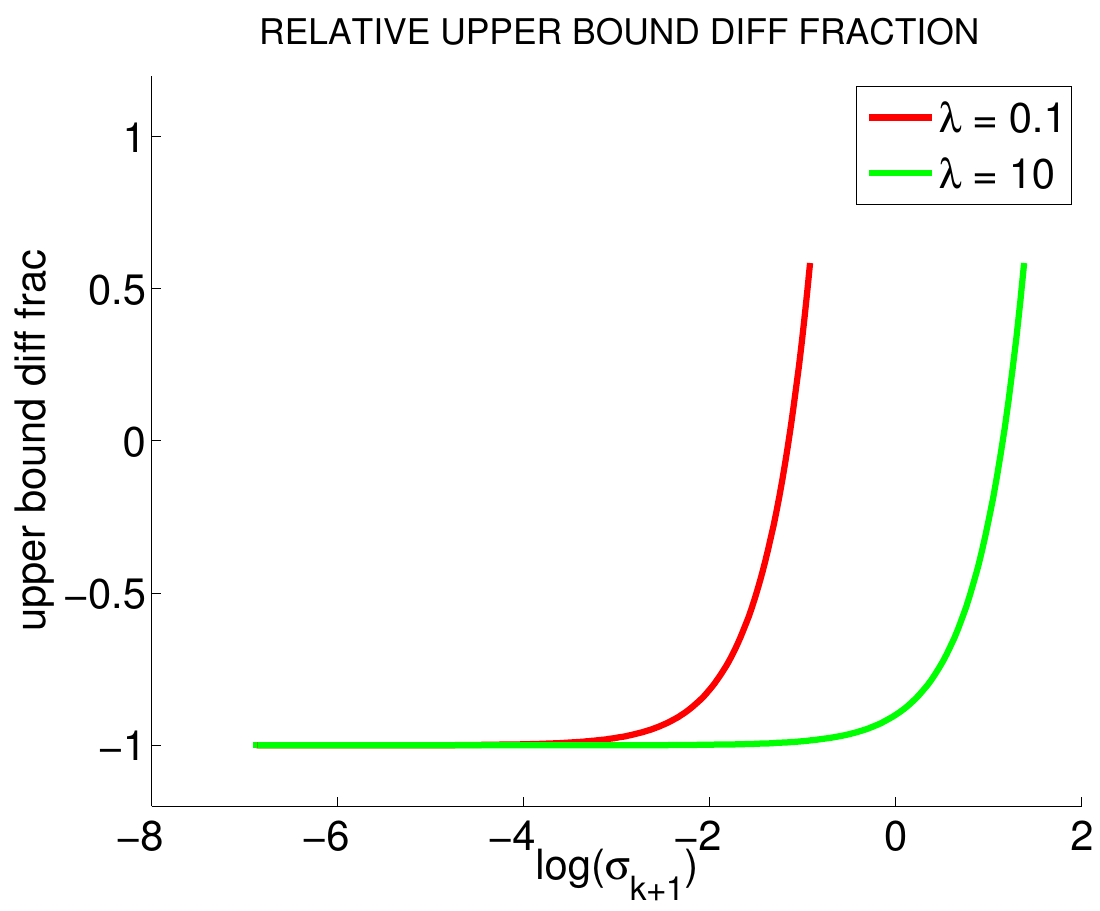}
}
\caption{Relative difference between upper bounds for the errors for approximate solutions 
$\tilde{x_1}$ and $\hat{x_1}$ (fraction \eqref{eq:tildex_hatx_ub_comp_eq}) 
as a function of different values of $\sigma_{k+1}$.}
\label{fig:tildex_hatx_ub_comp_fig}
\end{figure*}

\newpage
%%%%%%%%%%%%%%%%%%%%%%%%%%%%%%%%%
%%%% Numerical Experiments
%%%%%%%%%%%%%%%%%%%%%%%%%%%%%%%%%
%\newpage
\section{Numerical Experiments}
\label{sec:Numerics}
In this section, we give some numerical examples to discuss and illustrate the 
approximation techniques we have discussed. We will use both synthetic data 
and matrices from the seismic tomography application which we have previously referred to 
in order to illustrate the effect of wavelet thresholding and low rank SVD based compression. 

\subsection{Examples with Synthetic Data}
We use three different synthetic matrix types, which we denote by 
$A_{(1)}$, $A_{(2)}$, and $A_{(3)}$. The matrices are of size $1000 \times 1500$, small 
enough to be easily handled in full, but large enough for randomization techniques to work. 
Matrix $A_{(1)}$ is constructed via the 
reverse SVD construction $A_{(1)} = U \Sigma V^T$
where $U$ and $V$ are taken to be orthonormal Gaussian random matrices 
and the singular values in $\Sigma$ are logspaced 
between $10^0$ and $10^{-4}$. That is, the decay 
of singular values of $A_{(1)}$ is relatively fast. Matrix $A_{(2)}$ is a different 
kind of matrix, whose rows are permuted vectorized images. It is constructed 
by choosing at random, one of five images for each row, vectorizing the image 
and then using a randomized permutation of its vector form as a row of the matrix. 
Matrix $A_{(3)}$ is also constructed from the same vectorized images, but its rows are not 
randomly permuted vectors but rather vectors rearranged in a continuous way with 
overlooping boundaries, where we choose at random a starting index within the image vector and 
then go to the end of the array, looping back to the beginning and proceeding in order until we 
have $n$ elements. 

We now comment on the wavelet compressibility of each matrix. By ``wavelet 
compressible'' we mean that the rows of the matrices satisfy the relation 
\eqref{eq:wavelet_approx_for_x}. In our case, we apply the one dimensional 
CDF $9-7$ wavelet transform to each row vector and threshold out all but $\frac{1}{3}$ of 
the largest coefficients by absolute magnitude.
It should be apparent that the rows of $A_{(1)}$ are not readily wavelet 
compressible (as they are vectors picked at random having no apparent structure), 
some but not all of the rows of $A_{(2)}$ are wavelet compressible (as they are image 
vectors re-arranged in random order so that only rows arranged by chance in such 
a way as to have some structure are expected to be compressible), 
and virtually all rows of $A_{(3)}$ are readily wavelet compressible (they are vectorized 
images with a random starting index, but the pixel structure of the original image is preserved). 

%For our experiments in this section, we construct the corresponding wavelet compressed matrices and then 
%use these wavelet compressed matrices to compute the low rank SVD of each matrix with $k=200$. 
%Then we compare these different matrices (wavelet compressed and SVD) 
%for approximating matrix-vector operations and for obtaining solutions to regularization problems.
We start by constructing the compressed 
wavelet matrices $M_{(1)}$, $M_{(2)}$, and $M_{(3)}$,  
keeping a third of the nonzero wavelet coefficients in the thresholding. We then compare 
the errors induced in approximating 
matrix vector operations with the full matrices $A_{(1)},A_{(2)},A_{(3)}$ via these 
compressed matrices using the relations \eqref{eq:wavelet_approx_matvec_ops}. 
For $100$ Gaussian random vectors 
$x \in \mathbb{R}^{1500}$ and $y \in \mathbb{R}^{1000}$ we 
compare, using \eqref{eq:wavelet_approx_matvec_ops}, the results of the 
operations $A_{(i)} x$ versus $M_{(i)} W^{-T} x$, 
$A_{(i)}^T y$ versus $W^{-1} M_{(i)}^T y$ and $A_{(i)}^T A_{(i)} x$ versus 
$W^{-1} M_{(i)}^T M_{(i)} W^{-T} x$ for $i=1,2,3$ corresponding to the three matrices. 
The resulting percent errors 
(i.e. fractions such as $E = 100 \frac{\|A_{(1)} x - M_{(1)} W^{-T} x \|}{\|A_{(1)} x\|}$ and 
likewise for the other operations) are plotted in column 2 of Figure \ref{fig:synthetic_data1}, 
where we plot median values over $10$ trials and in 
each trial utilize $100$ Gaussian random vectors $x$ and $y$. Notice that in the first case, 
where the matrix was chosen to not compress well, the errors are high. In the 
other two cases, the operations with matrices $A_{i}^T A_{i}$ ($i=2,3$) are approximated 
well. It is especially interesting that this is the case for the second matrix, where some 
of the rows are not wavelet compressible. 

Next, we use the $M_{(i)}$ matrices to compute the 
low rank SVD of $A_{(i)}$ with $k=200$ to achieve further size reduction. That is, we use 
the randomized SVD algorithm previously shown where we utilize matrix $M$ to approximate 
all necessary operations with $A$. Once the low rank SVD components $U_k$, $\Sigma_k$, 
and $V_k$ are obtained, we compare the same operations with $A$ as before to the 
approximation via the low rank SVD:
\begin{equation*}
Ax \mbox{  to  } U_k \Sigma_k V^T_k x \mbox{      ;      } A^T y \mbox{  to  } V_k \Sigma_k U_k^T y \mbox{      ;      } 
A^T A x \mbox{  to  } V_k \Sigma^2_k V_k^T x
\end{equation*} 
For comparison, for each matrix, we also compute the low rank SVD with the full $A_{(i)}$, without 
using $M_{(i)}$ to approximate matrix-vector operations. We expect this to give a more 
accurate low rank SVD. The corresponding percent errors (such as 
$E = 100 \frac{\|A_{(1)} x - U_k \Sigma_k V^T_k x\|}{\|A_{(1)} x\|}$) for the operations are shown in 
column 3 of Figure \ref{fig:synthetic_data1} below. In all cases, the plotted lines are median values 
obtained over $10$ separate trials. The result is interesting but somewhat expected because of the use 
of randomization in the computation: the low rank SVD computed via $M$ produces similar 
results to that computed via $A$ even if for some particular row vectors of $A$, 
the relation \eqref{eq:wavelet_approx_for_x} is not satisfied. However, notice that this 
does not hold for matrix $A_{(1)}$ whose rows are not wavelet compressible. From the last 
column of Figure \ref{fig:synthetic_data1}, we see differences between the results of the low rank 
SVD computed with $A_{(1)}$ and with $M_{(1)}$.

Next, we make a synthetic data vector $x$, and use the three matrices 
$A_{(1)}, A_{(2)}, A_{(3)}$ to construct the right hand side $b_{(i)} = A_{(i)} x + \nu$ 
with $\nu$ a Gaussian random noise vector (we choose to use $10$ percent noise relative 
to the norm of $b_{(i)}$). We then try to reconstruct $x$ with the various approximation 
schemes by computing solutions to the Tikhonov problem with smoothing \\
$\left( A^T A + \lambda_1 I + \lambda_2 L^T L \right) \bar{x} = A^T b$, where 
for $L$ we take the tridiagonal matrix with elements $(-1,2,-1)$. In  
Figure \ref{fig:synthetic_data2}, we present the results of various approximation 
schemes we described. In particular, we plot the following solutions:
\begin{eqnarray*}
(A^T A + \lambda_1 I + \lambda_2 L^T L)\bar{x} &=& (A^T b) \\
(W^{-1} M^T M W^{-T} + \lambda_1 I + \lambda_2 L^T L)^{-1}x_{wav} &=& (W^{-1} (M^T b)) \\
(V_k \Sigma_k^2 V_k^T + \lambda_1 I + \lambda_2 L^T L)x_{svd1} &=& (V_k \Sigma_k U_k^T b) \\
(V_k \Sigma_k^2 V_k^T + \lambda_1 I + \lambda_2 L^T L)x_{svd2} &=& (A^T b) \\
(\Sigma_k^2 + \lambda_1 I_k + \lambda_2 V_k^T L^T L V_k) y_{svd3} &=& (\Sigma_k U_k^T b) \quad 
\mbox{;} \quad x_{svd3} = V_k y_{svd3} \\
\end{eqnarray*}
In each case, we loop over $40$ linearly spaced values of $\lambda_1$ and $\lambda_2$ 
(effecting the degree of norm and smoothing penalty, respectively) and choose the values so that the residual 
norm $||A x_{sol} - b||_2$ of the solution is closest to the norm of the noise vector $||\nu||_2$. In  
Figure \ref{fig:synthetic_data2}, we plot the on the first row the true solution vector $x$ 
followed by the solutions obtained using the full matrix $A$. On the second row, we plot for 
each matrix type ($A_{(i)}$), the residual norms of the different solutions relative to the 
noise norm. On rows three to six, we plot the different solutions obtained with the various 
approximations schemes for the matrices ($A_{(i)}$). We observe that in each case, we can 
obtain reasonable reconstructions using the approximation schemes we introduced. The wavelet 
compressed approach is the most accurate with respect to the full solution, followed by 
the two svd methods. The $k \times k$ method ($x_{svd3}$) produces a suitable reconstruction 
for the third matrix $A_{(3)}$, whose rows are all wavelet compressible. On the 
other hand, the $k \times k$ method does not work well for the first two matrices.  
In summary, the synthetic data examples show that in many practical cases, 
wavelet compression and low rank SVD techniques can be used together to obtain approximate 
regularized solutions, with the SVD matrices obtained using operations with the wavelet 
compressed matrix instead of the original matrix.
% to obtain approximate regularized solutions using 
%operations with significantly smaller matrices.   

%\newpage 

\begin{figure*}[ht!]
\centerline{
\includegraphics[scale=0.29]{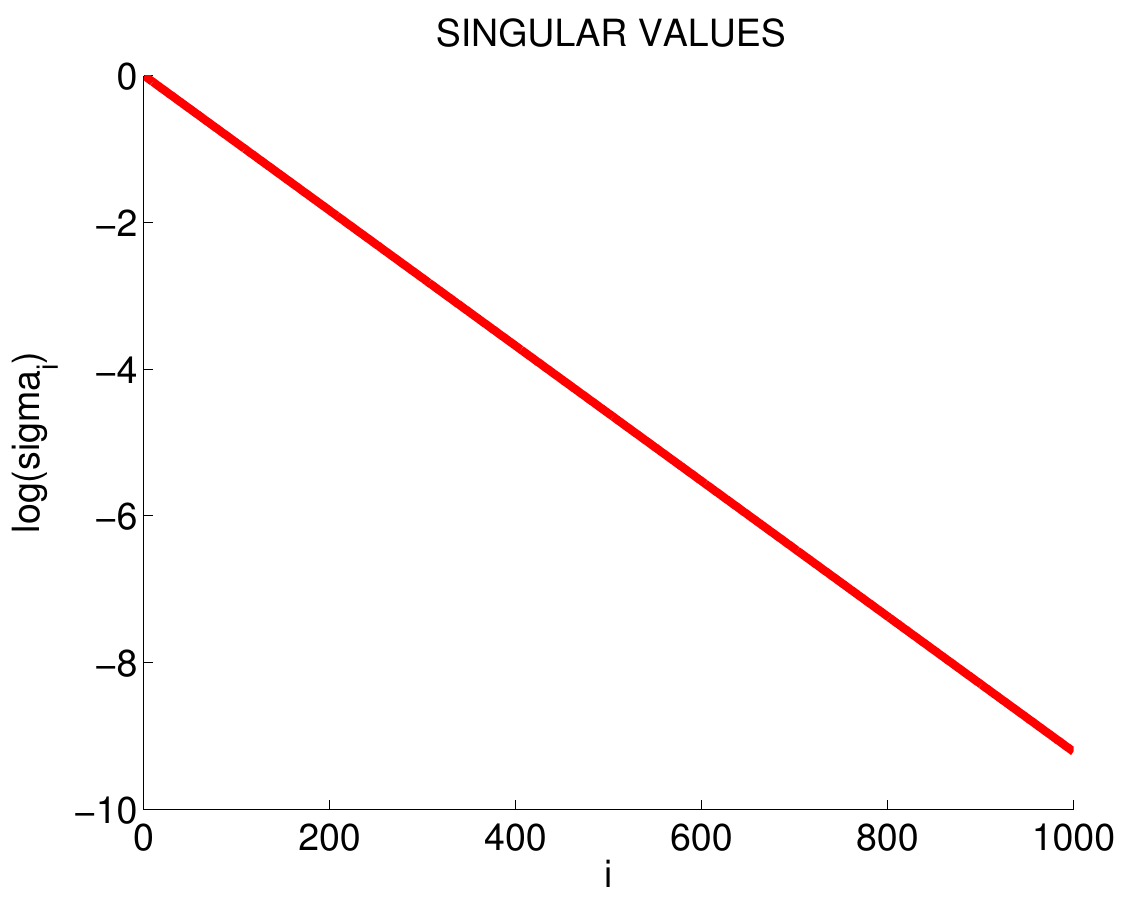}
\includegraphics[scale=0.29]{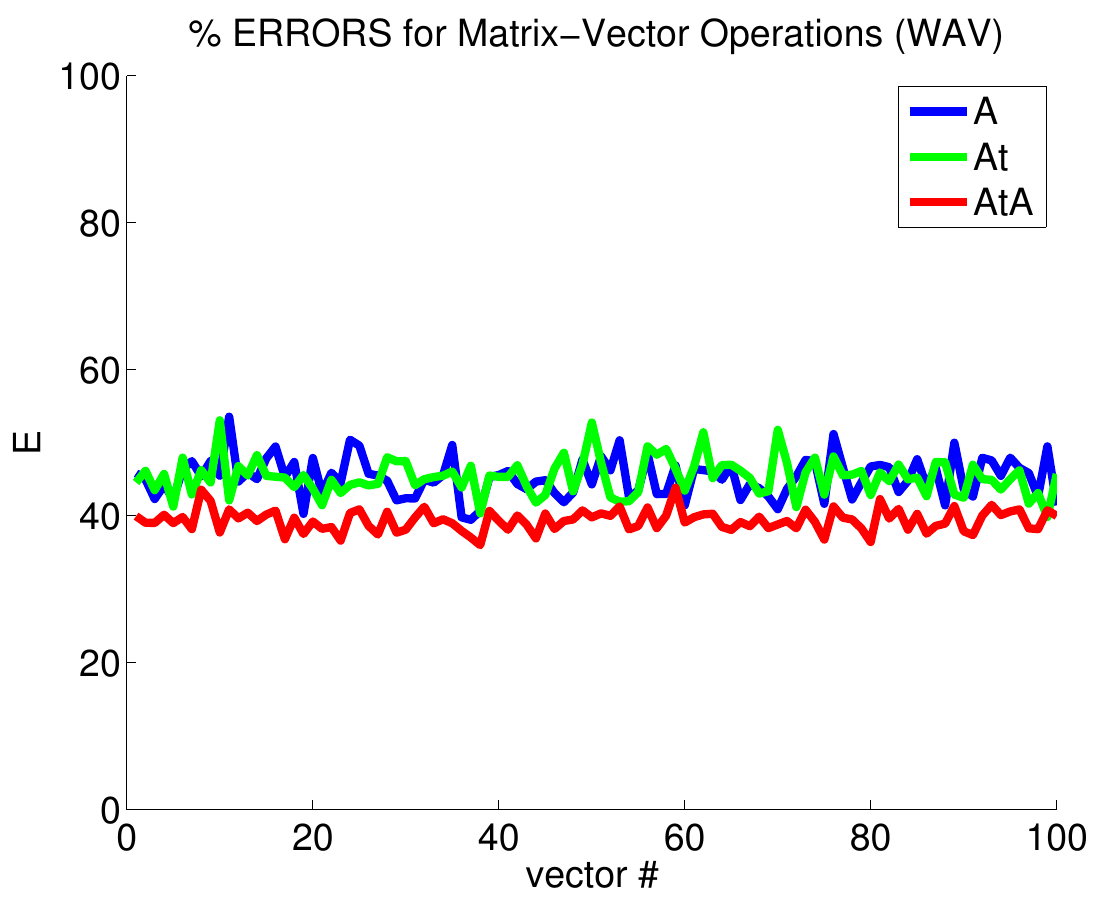}
\includegraphics[scale=0.29]{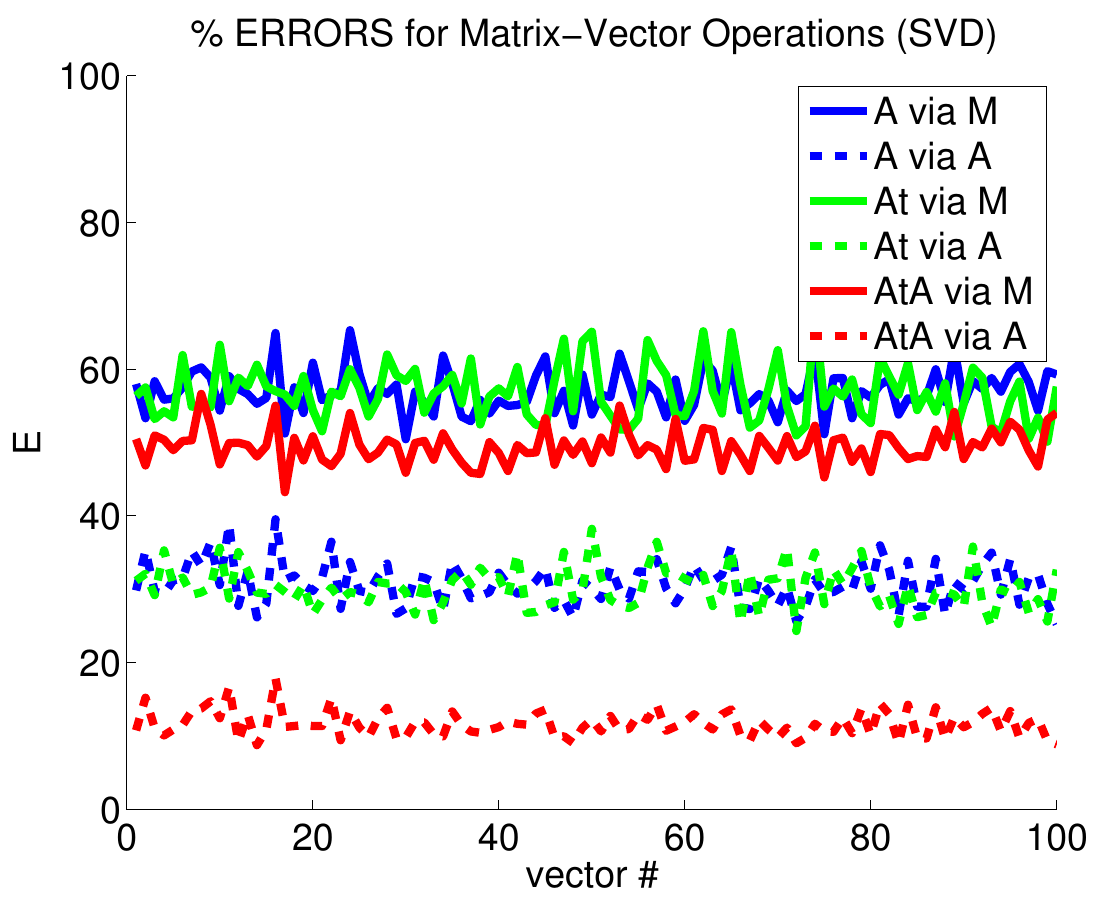}
}
\centerline{
\includegraphics[scale=0.29]{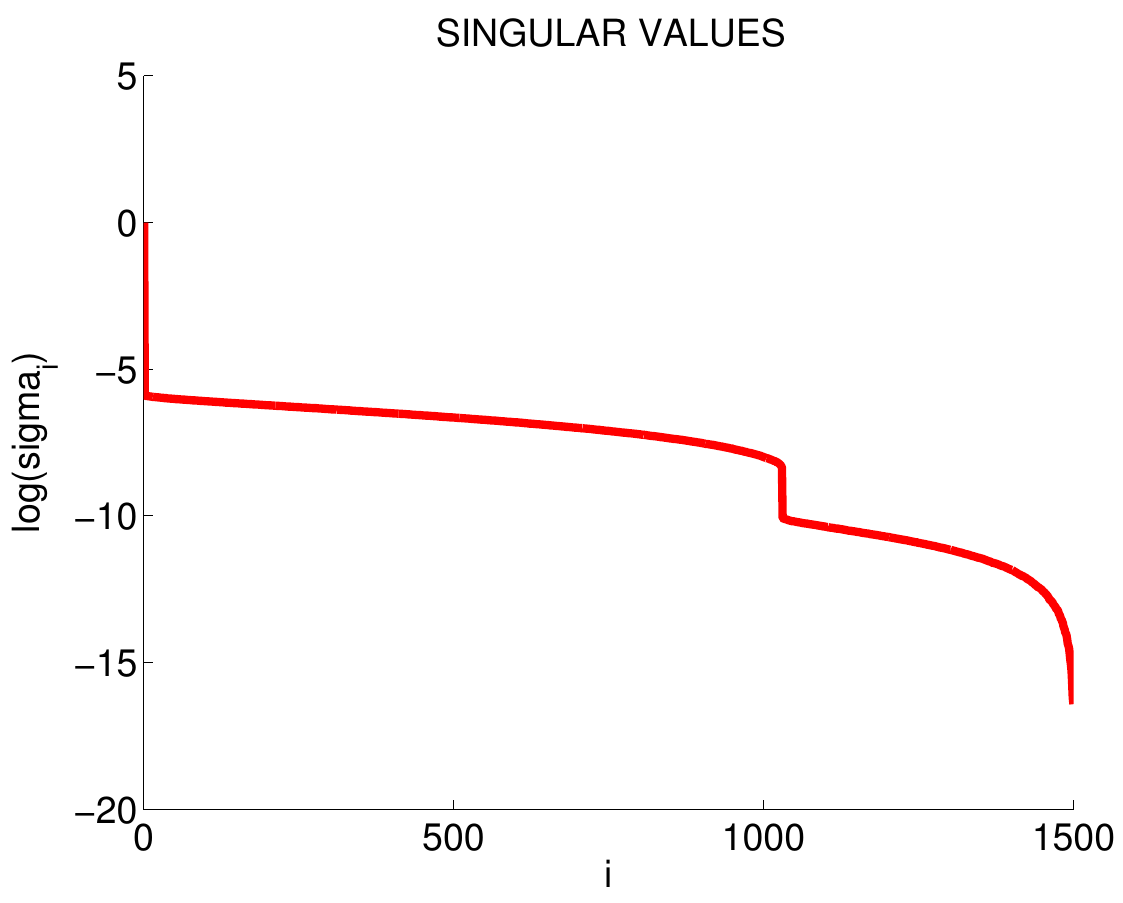}
\includegraphics[scale=0.29]{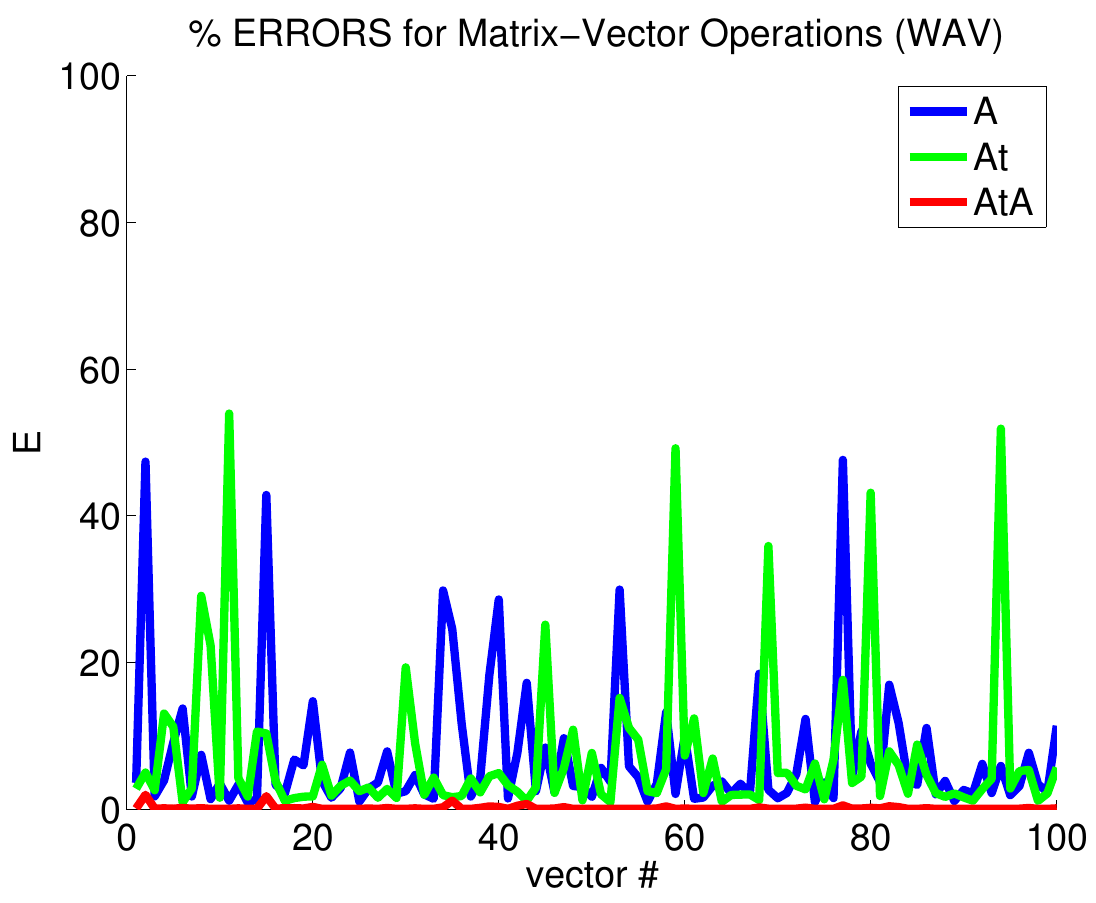}
\includegraphics[scale=0.29]{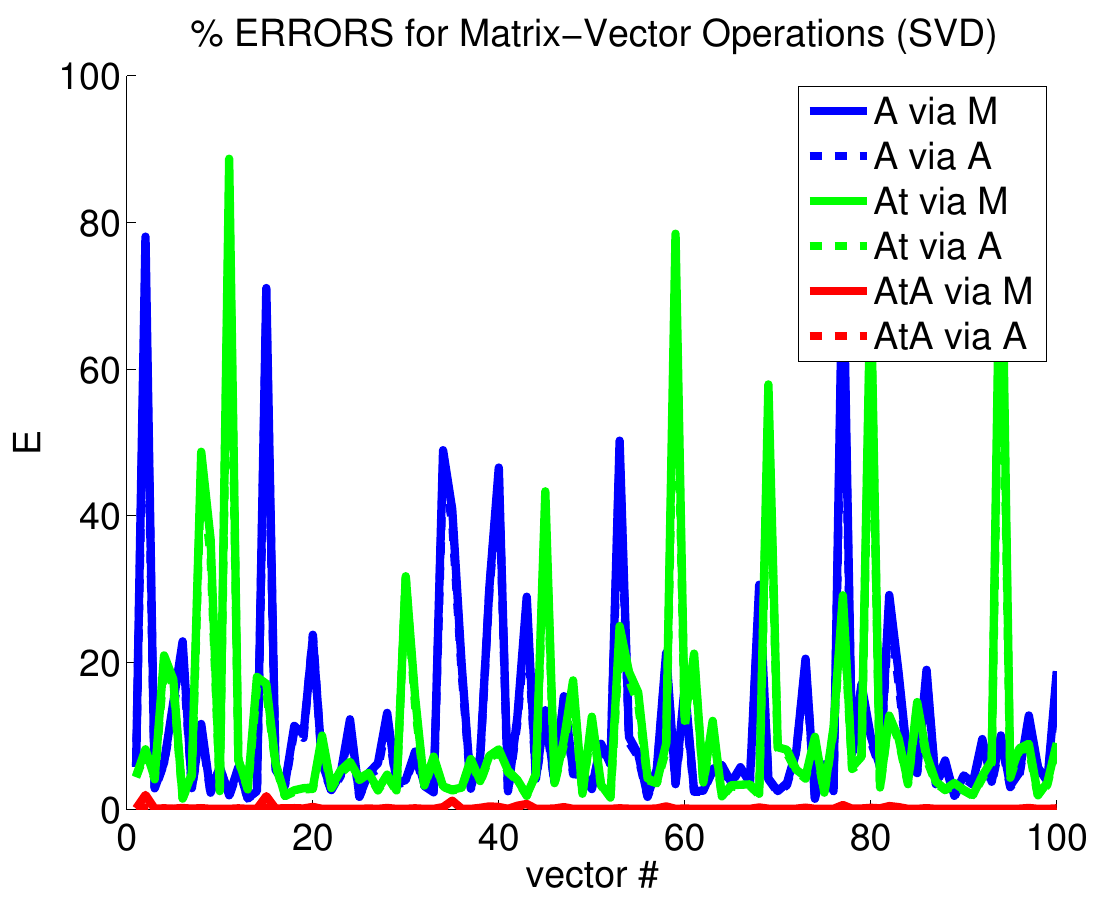}
}
\centerline{
\includegraphics[scale=0.29]{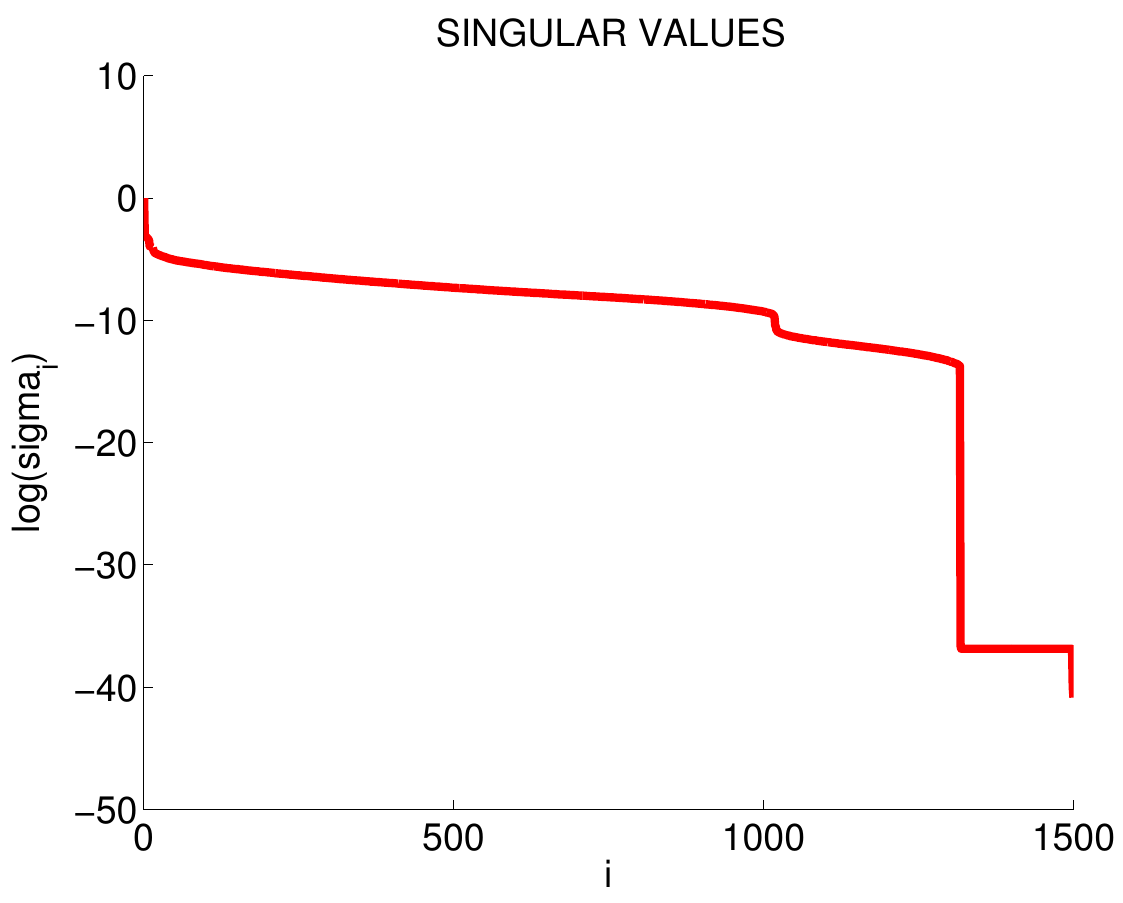}
\includegraphics[scale=0.29]{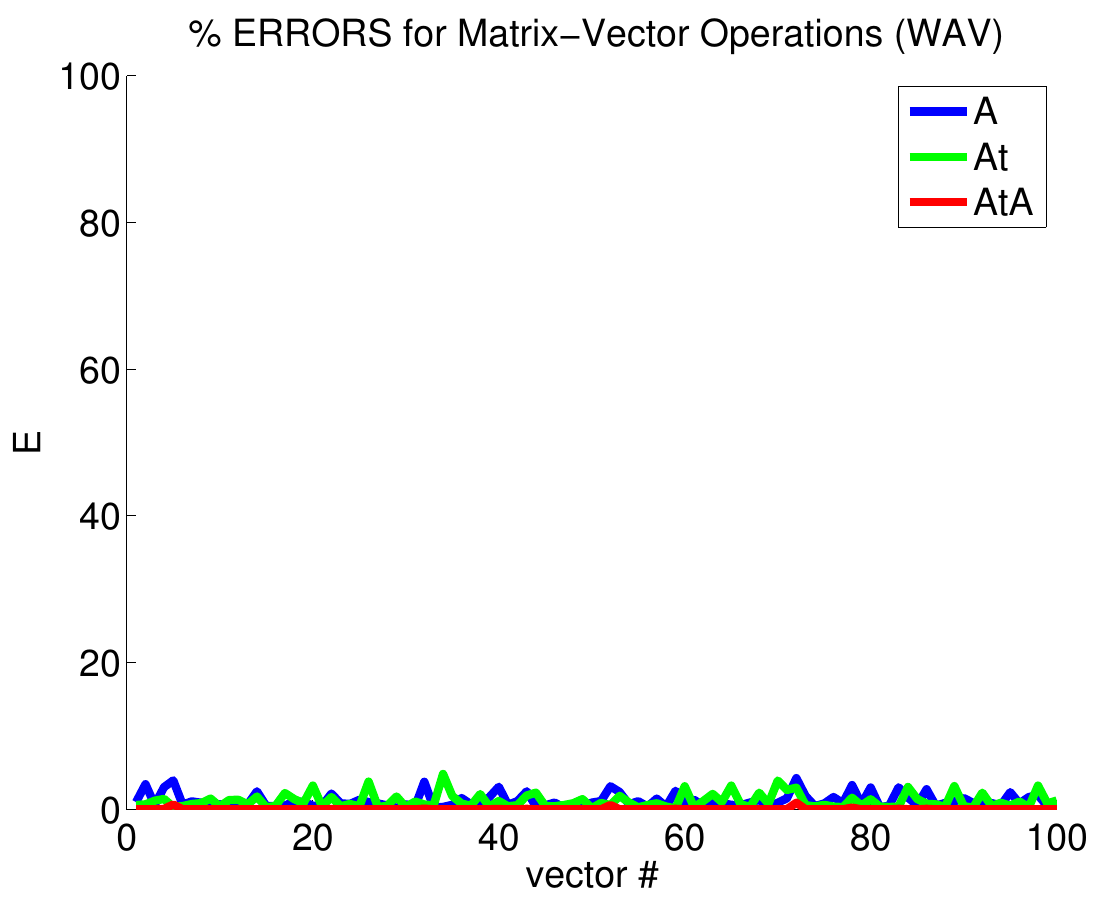}
\includegraphics[scale=0.29]{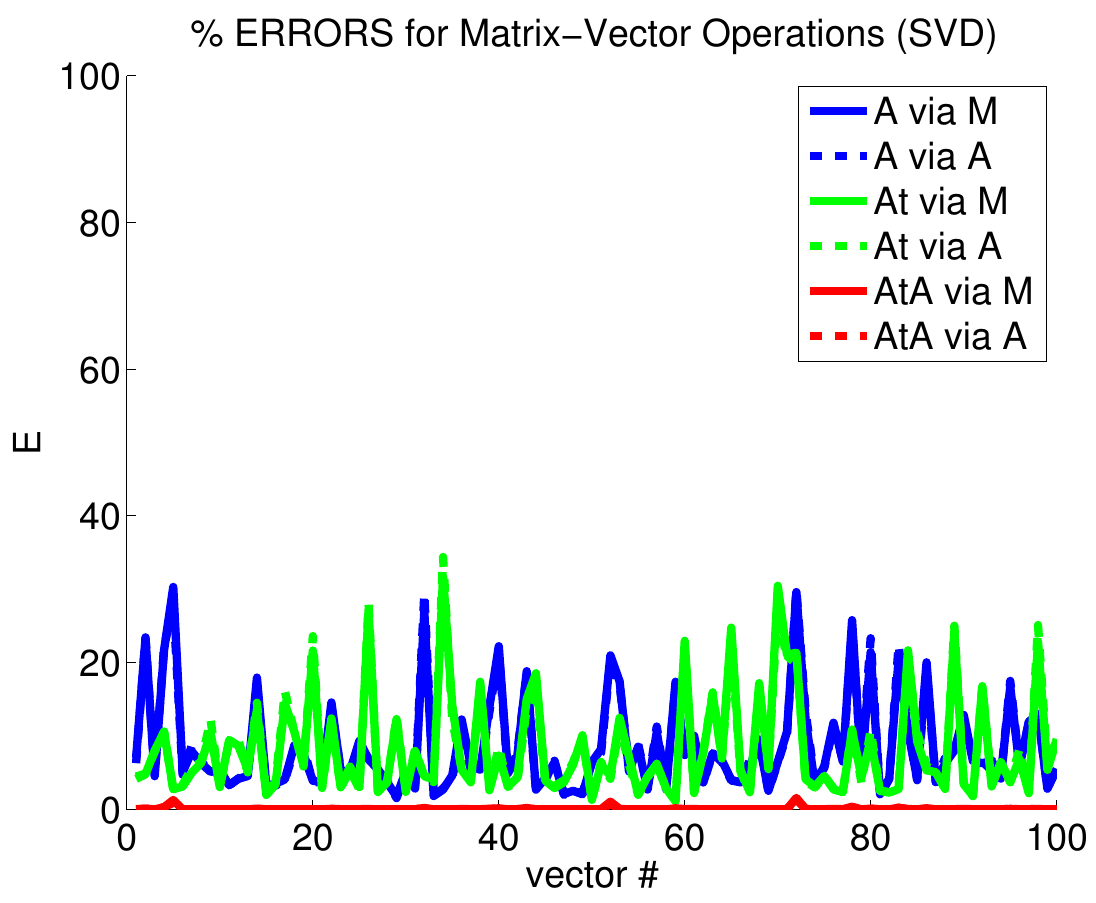}
}
\caption{Percent errors for approximating matrix-vector operations with $A_{(1)}$ (row 1), $A_{(2)}$ (row 2), 
and $A_{(3)}$ (row 3) via the wavelet compressed matrices $M_{(i)}$ and via the low rank SVD. 
Singular values (column 1), percent errors in matrix 
vector operations for $100$ Gaussian random vectors using wavelet compression (column 2) and low 
rank SVD (column 3). In column 3 we plot errors obtained via the low rank SVD approximation obtained 
using the full $A_{(i)}$ matrices and using the corresponding wavelet compressed $M_{(i)}$ matrices.}
\label{fig:synthetic_data1}
\end{figure*} 

\newpage

\begin{figure*}[ht!]
\centerline{
\includegraphics[scale=0.21]{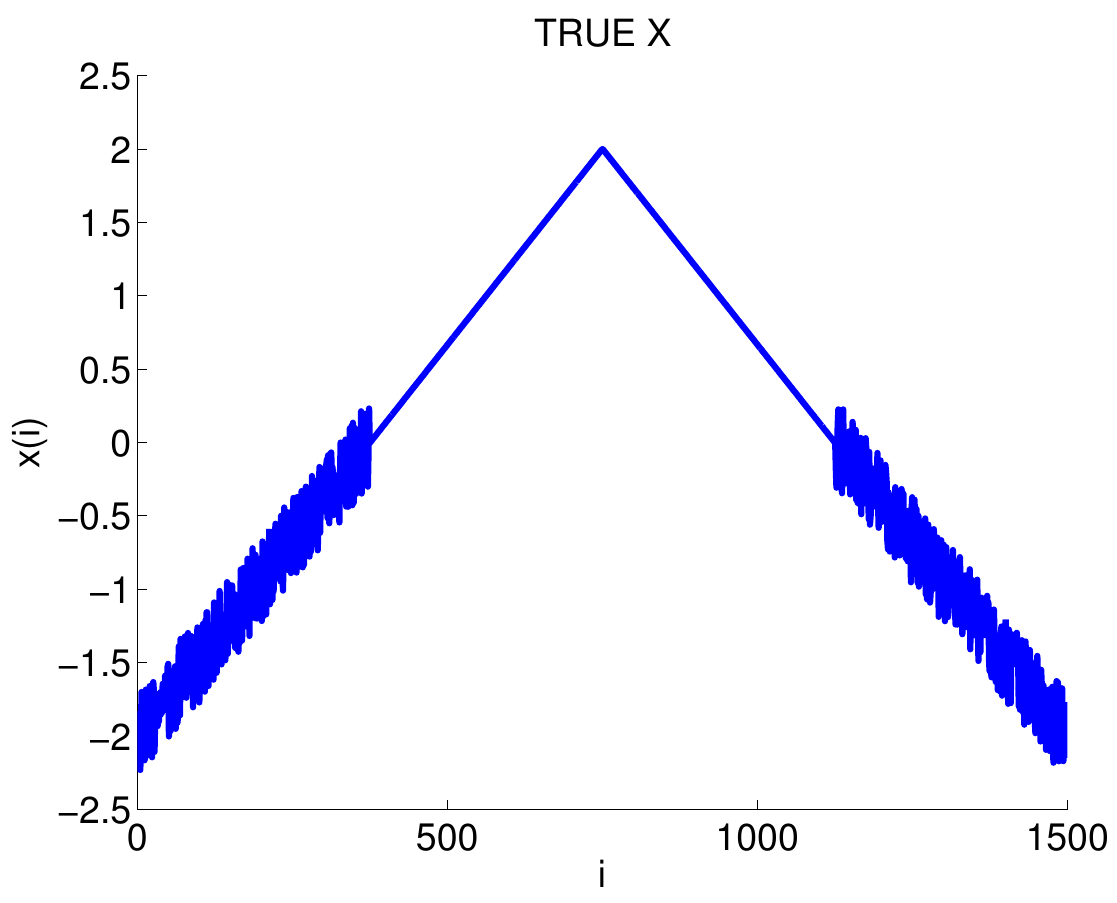}
\includegraphics[scale=0.21]{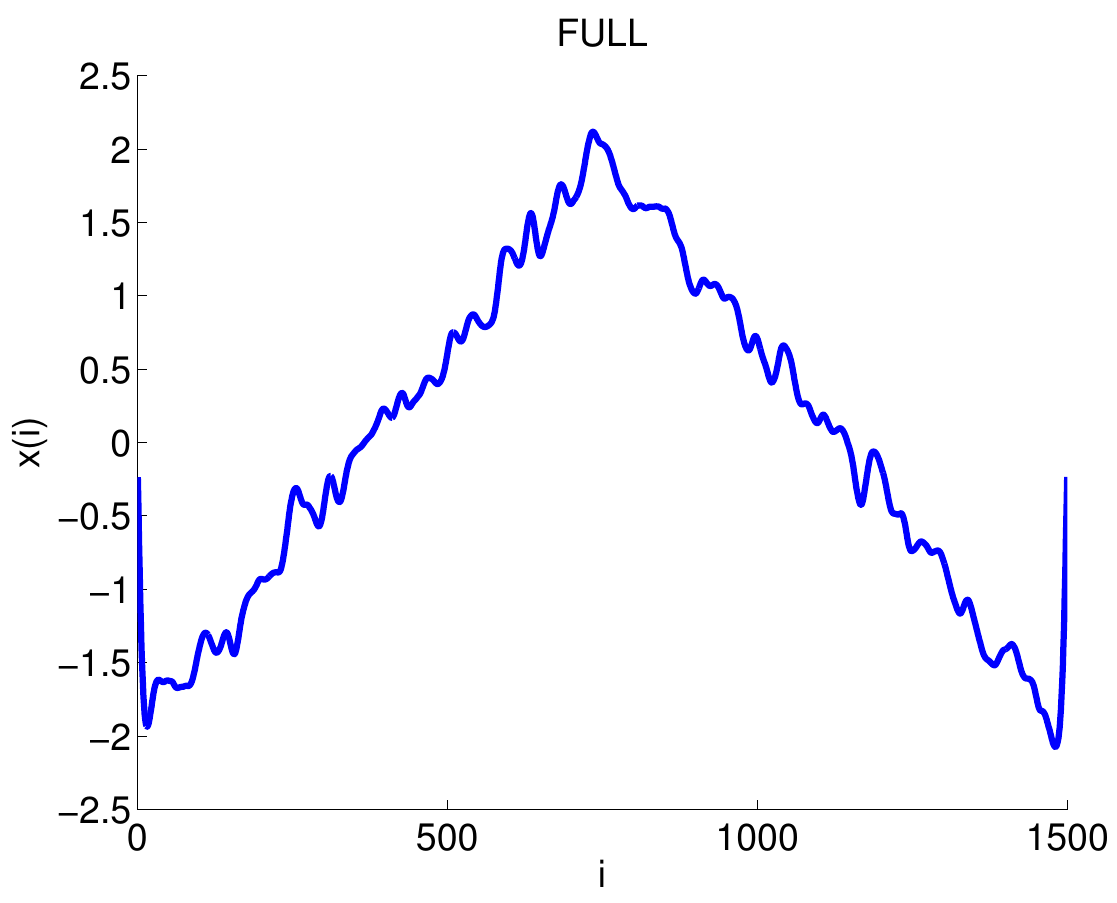}
\includegraphics[scale=0.21]{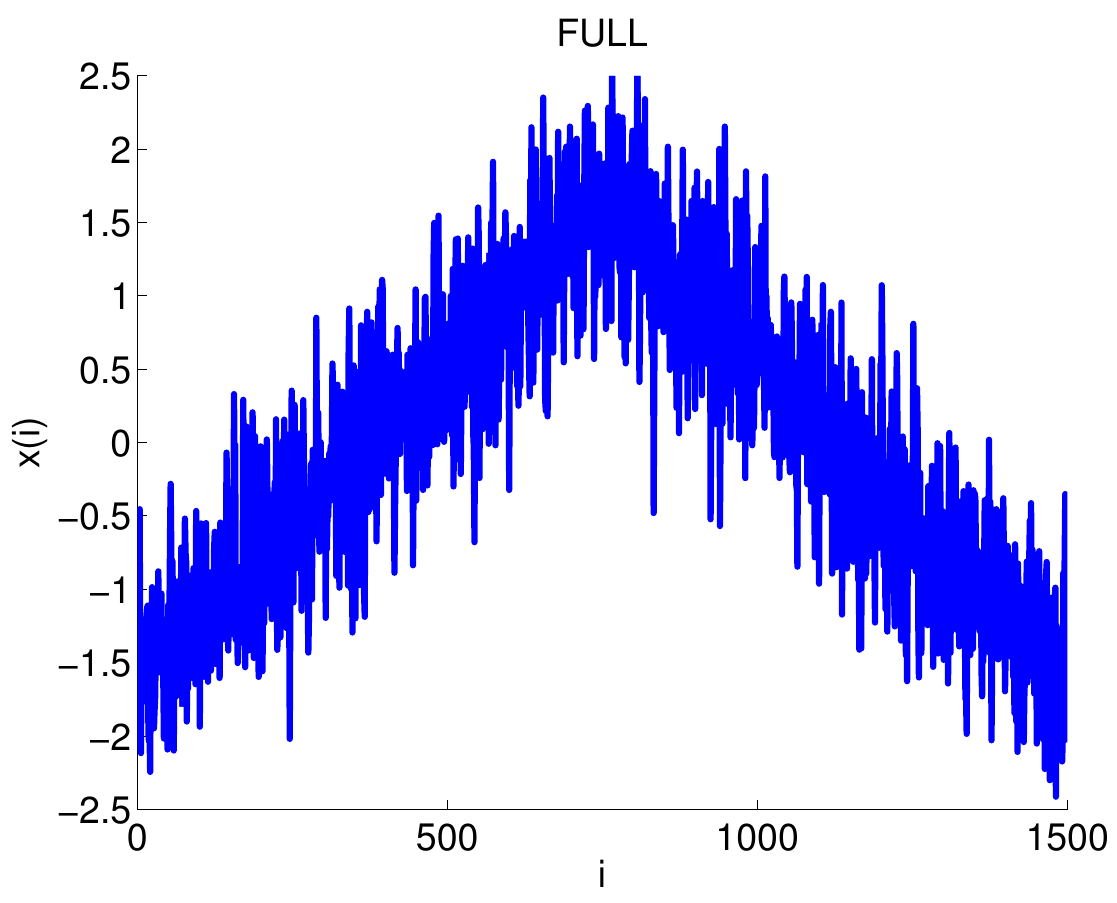}
\includegraphics[scale=0.21]{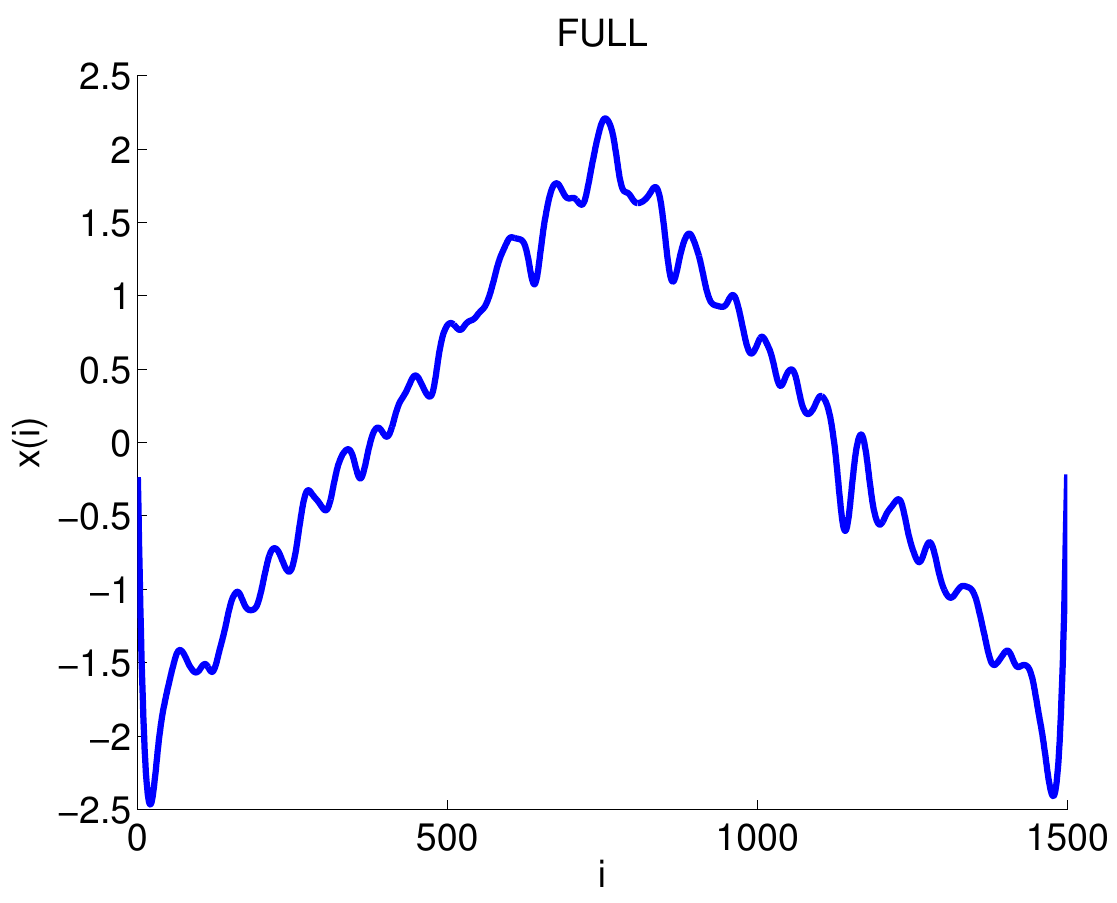}
}
\centerline{
\includegraphics[scale=0.30]{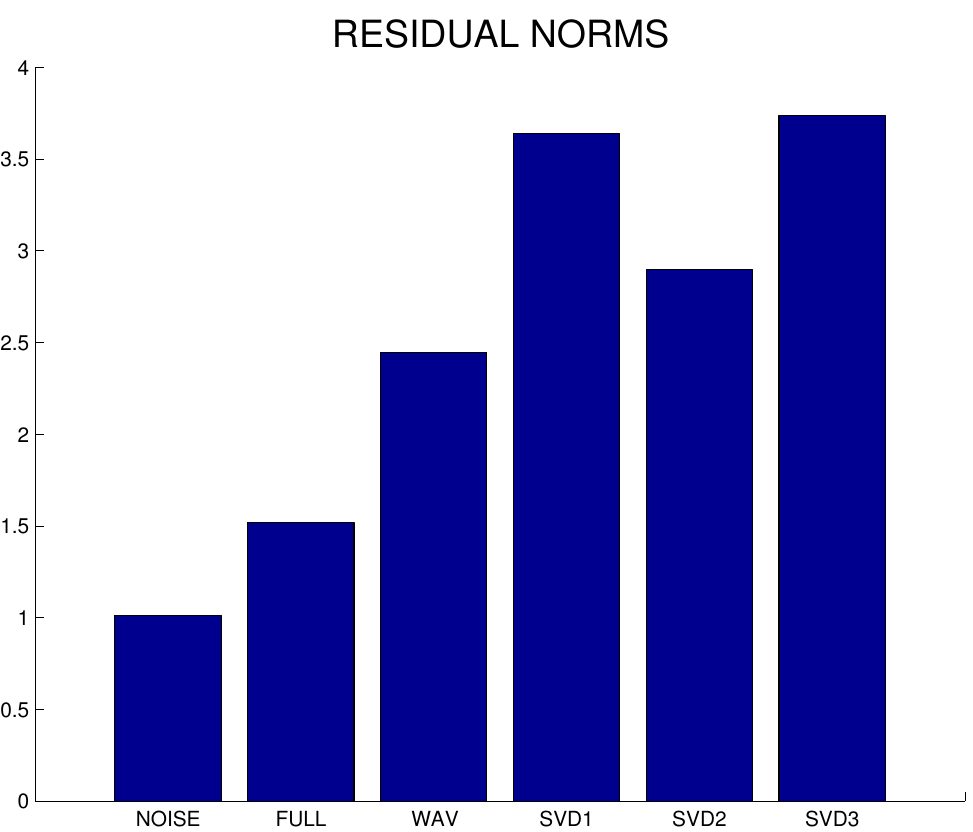}
\includegraphics[scale=0.30]{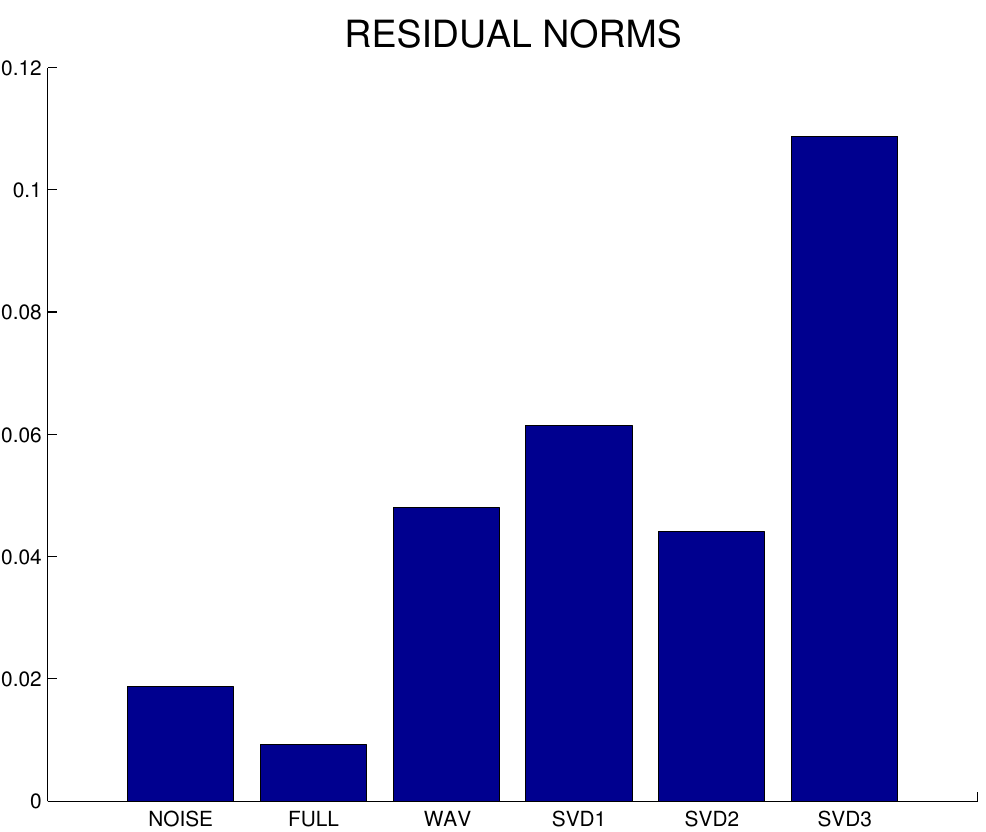}
\includegraphics[scale=0.30]{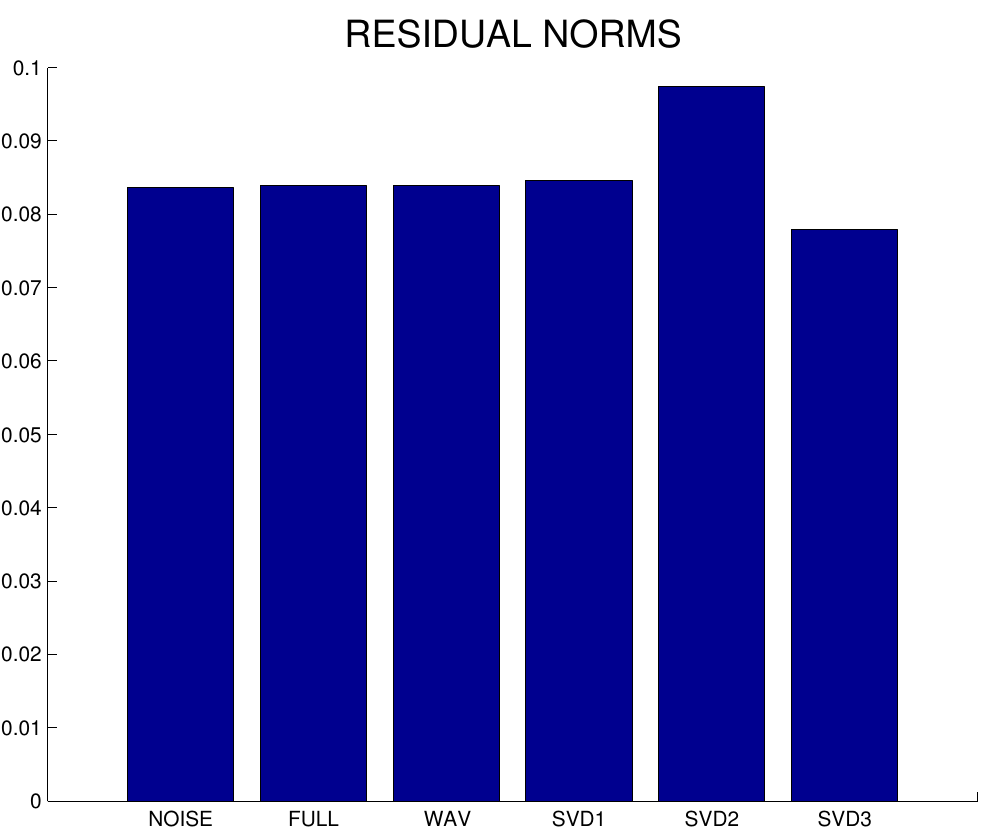}
}
\centerline{
\includegraphics[scale=0.21]{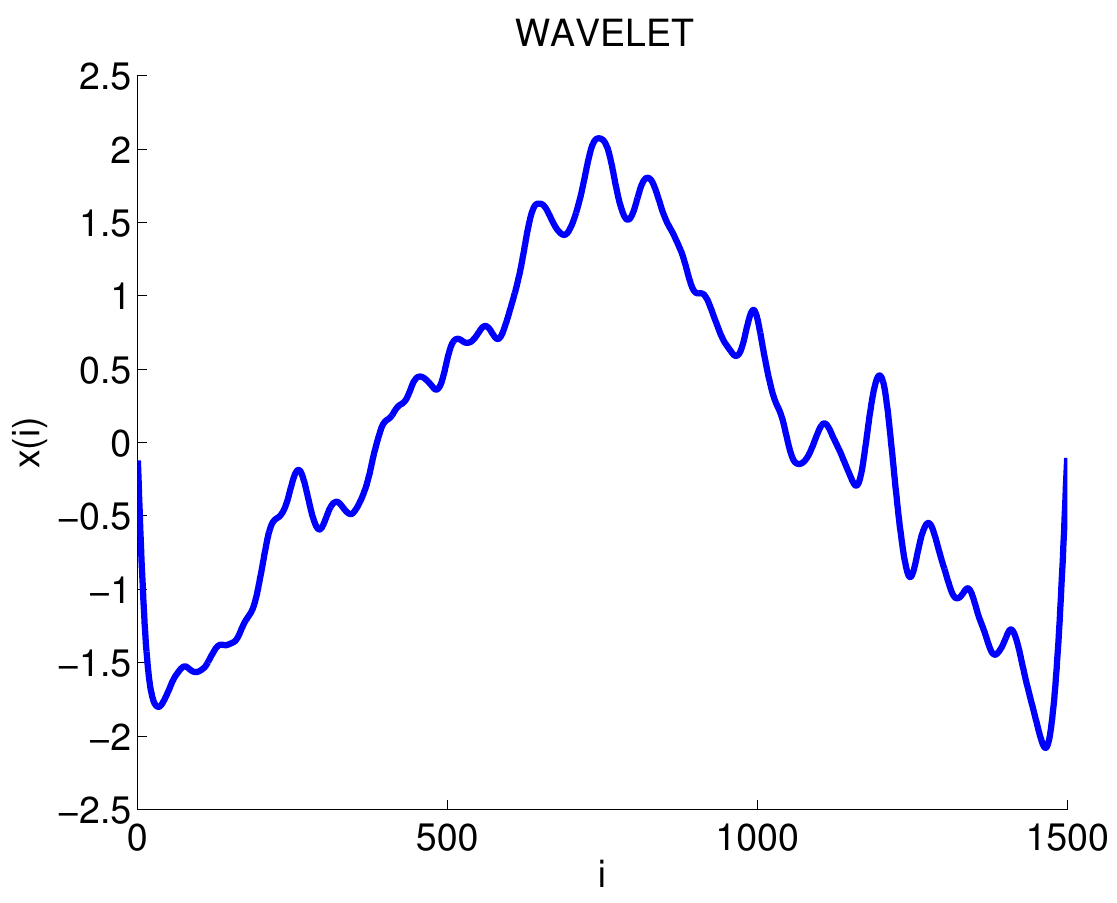}
\includegraphics[scale=0.21]{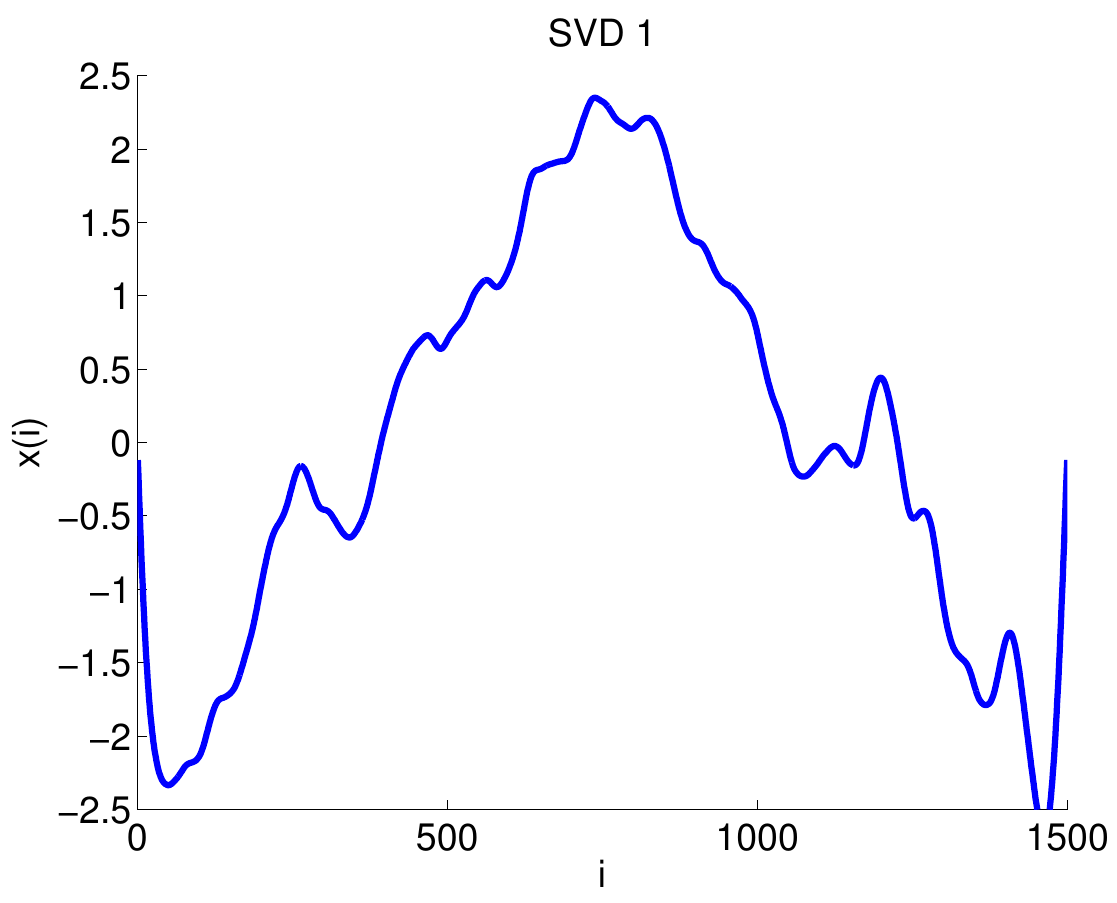}
\includegraphics[scale=0.21]{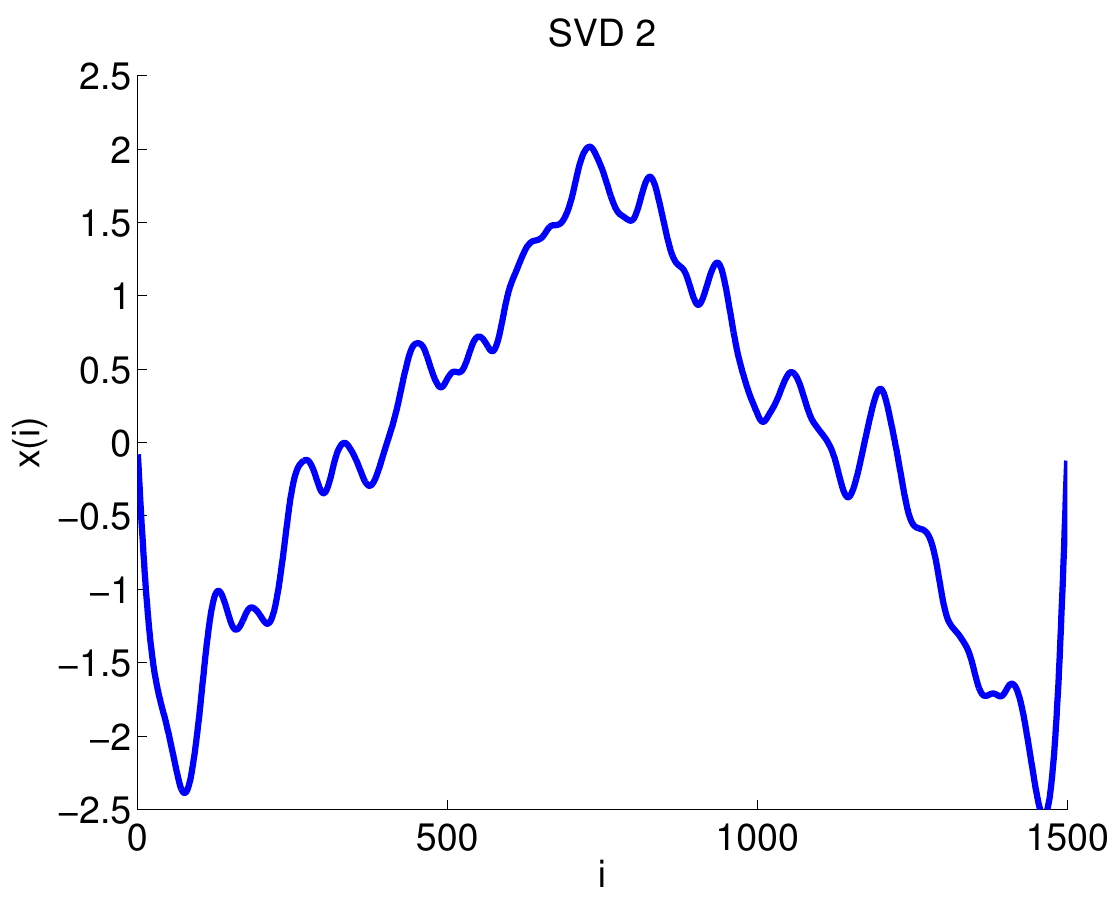}
\includegraphics[scale=0.21]{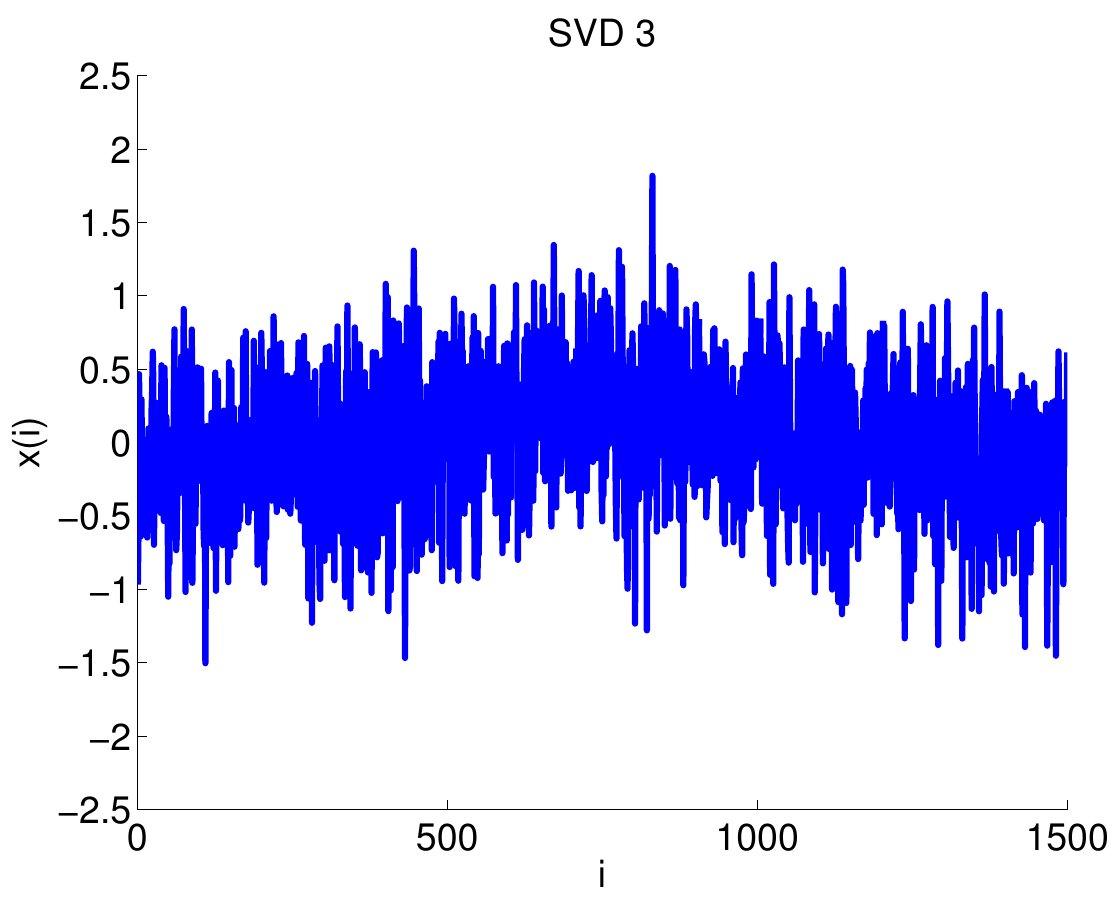}
}
\centerline{
\includegraphics[scale=0.21]{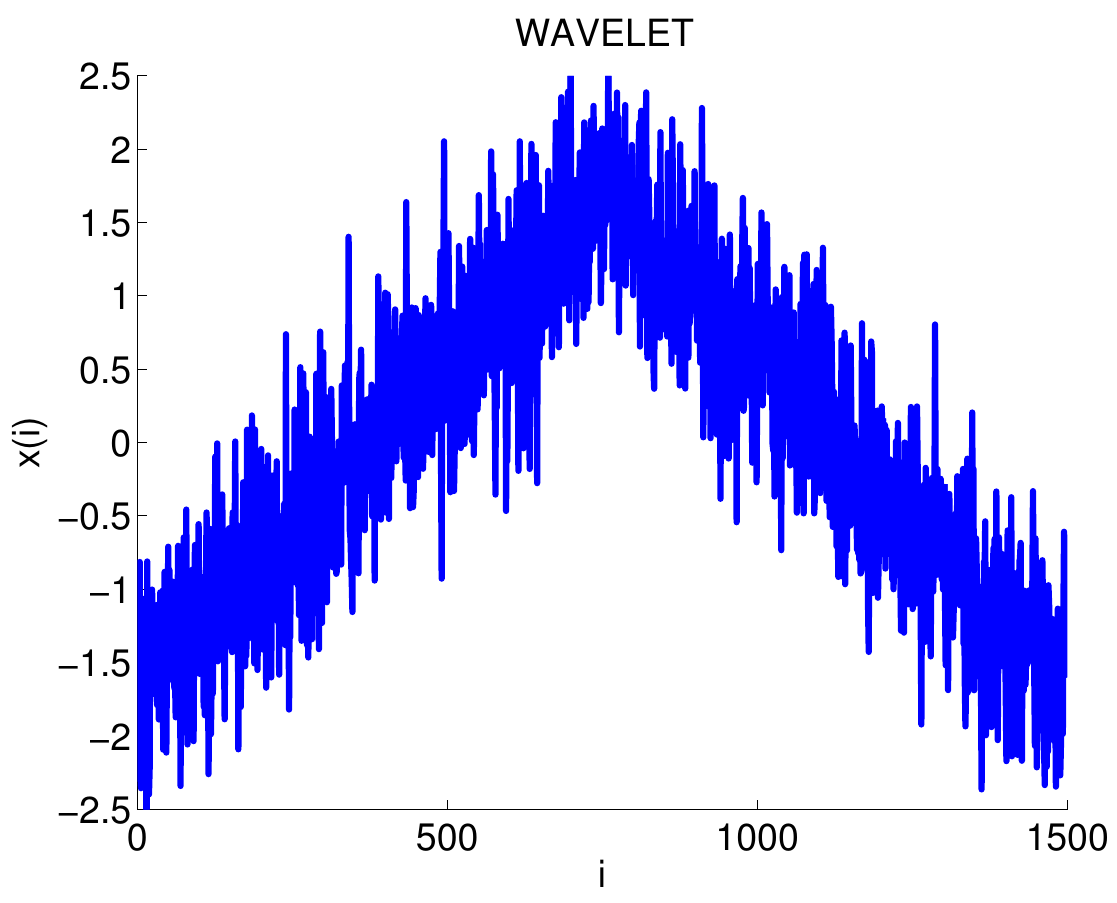}
\includegraphics[scale=0.21]{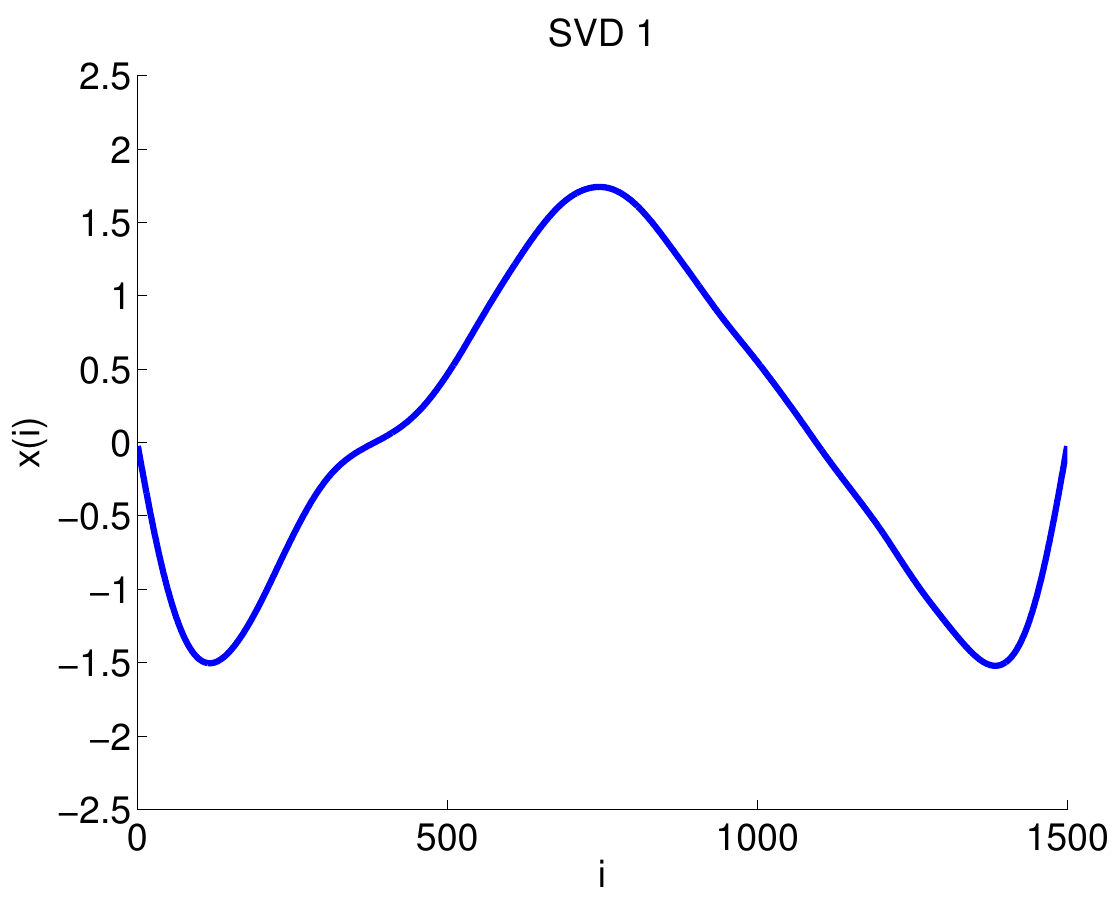}
\includegraphics[scale=0.21]{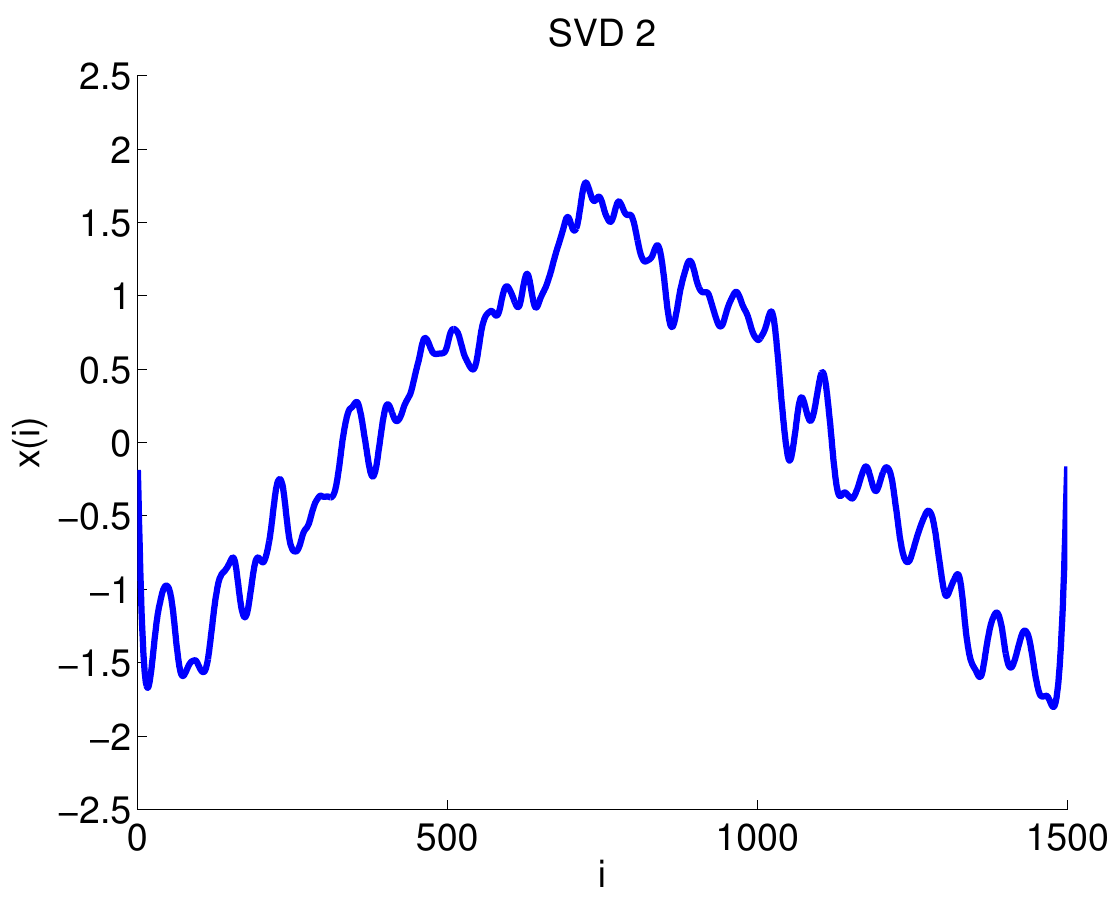}
\includegraphics[scale=0.21]{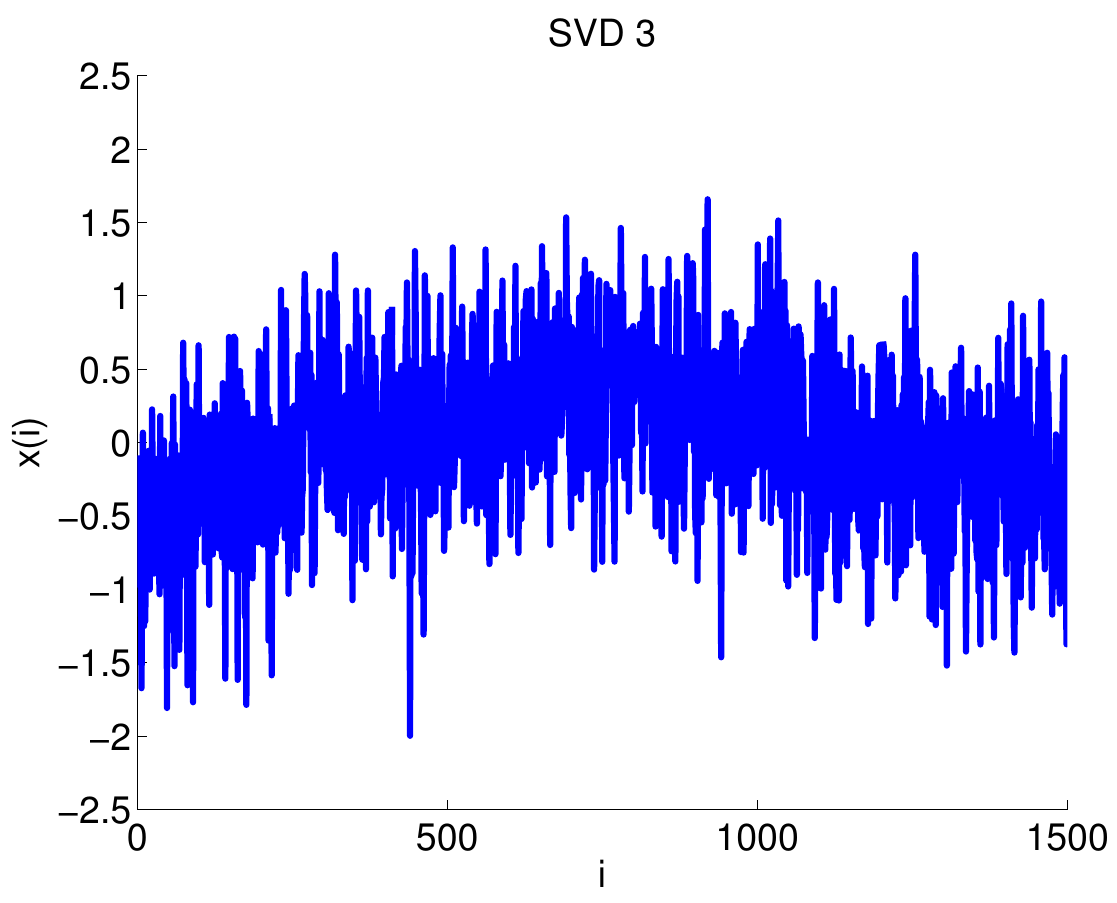}
}
\centerline{
\includegraphics[scale=0.21]{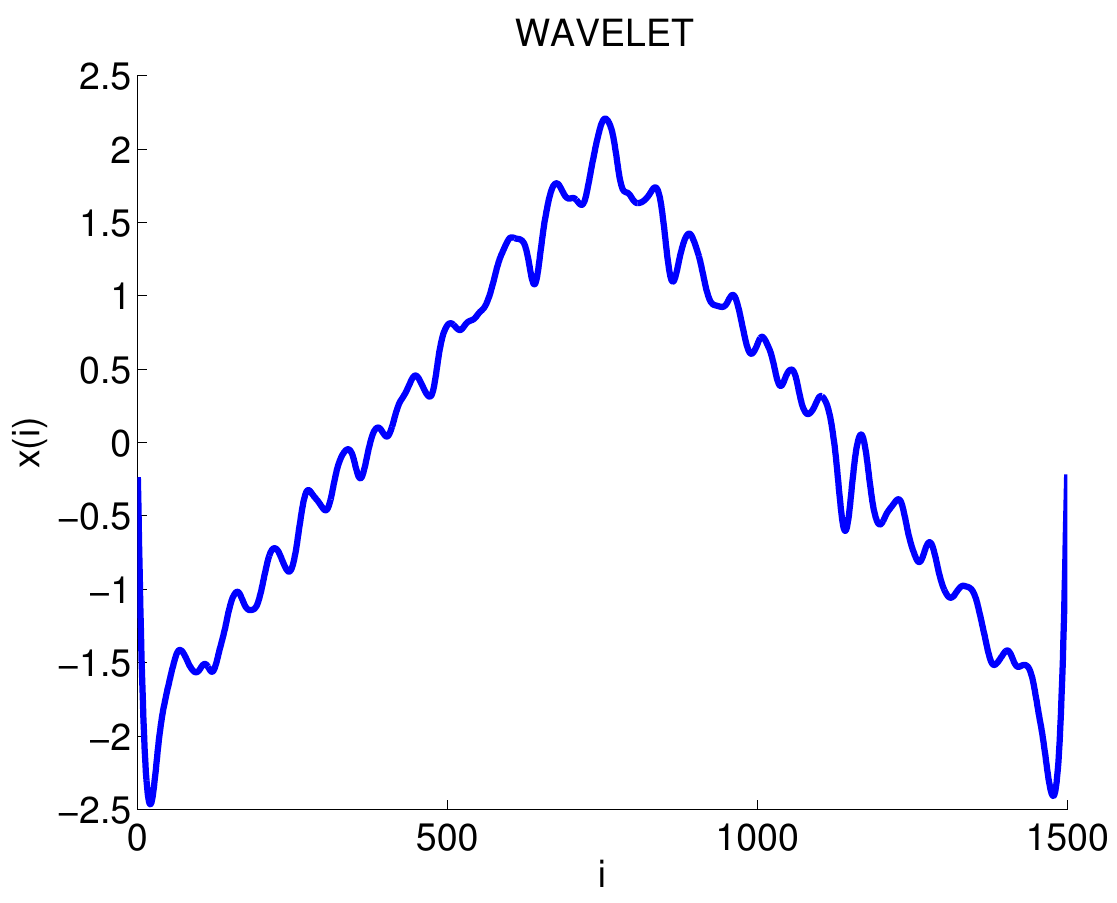}
\includegraphics[scale=0.21]{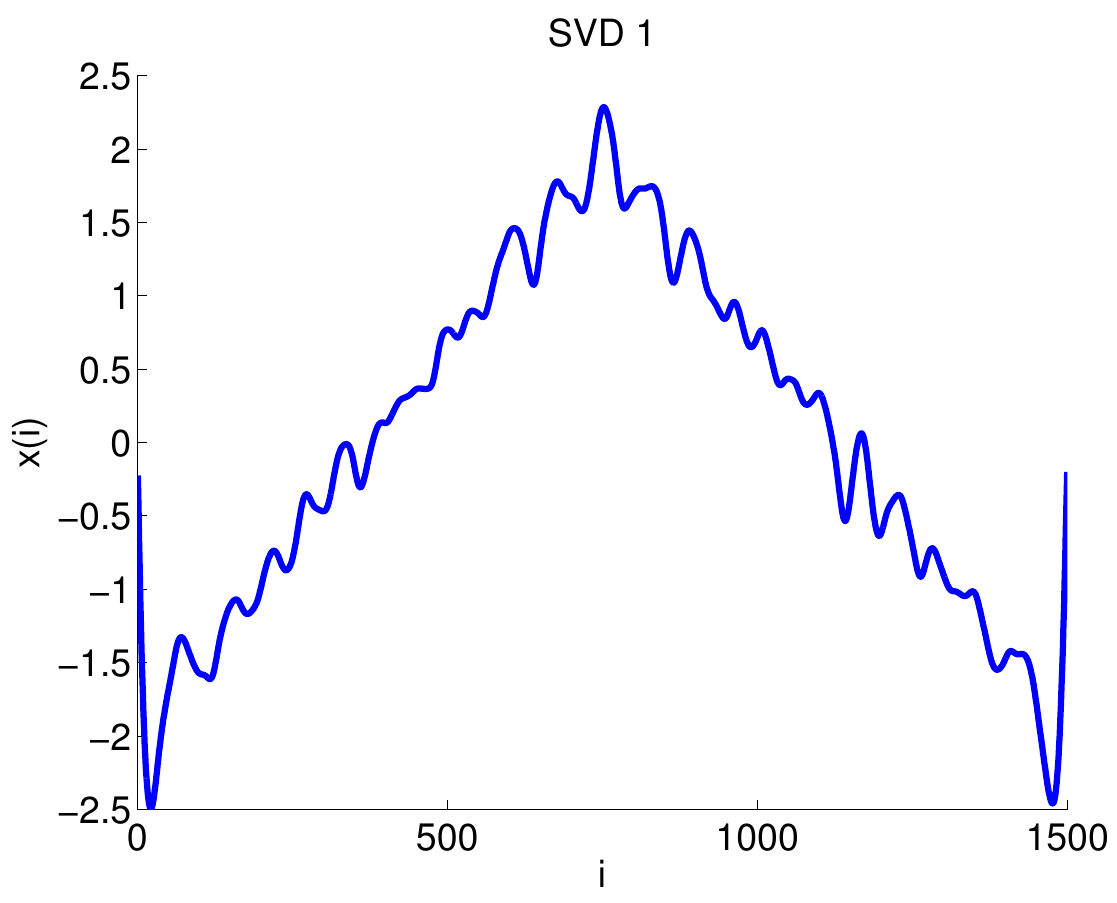}
\includegraphics[scale=0.21]{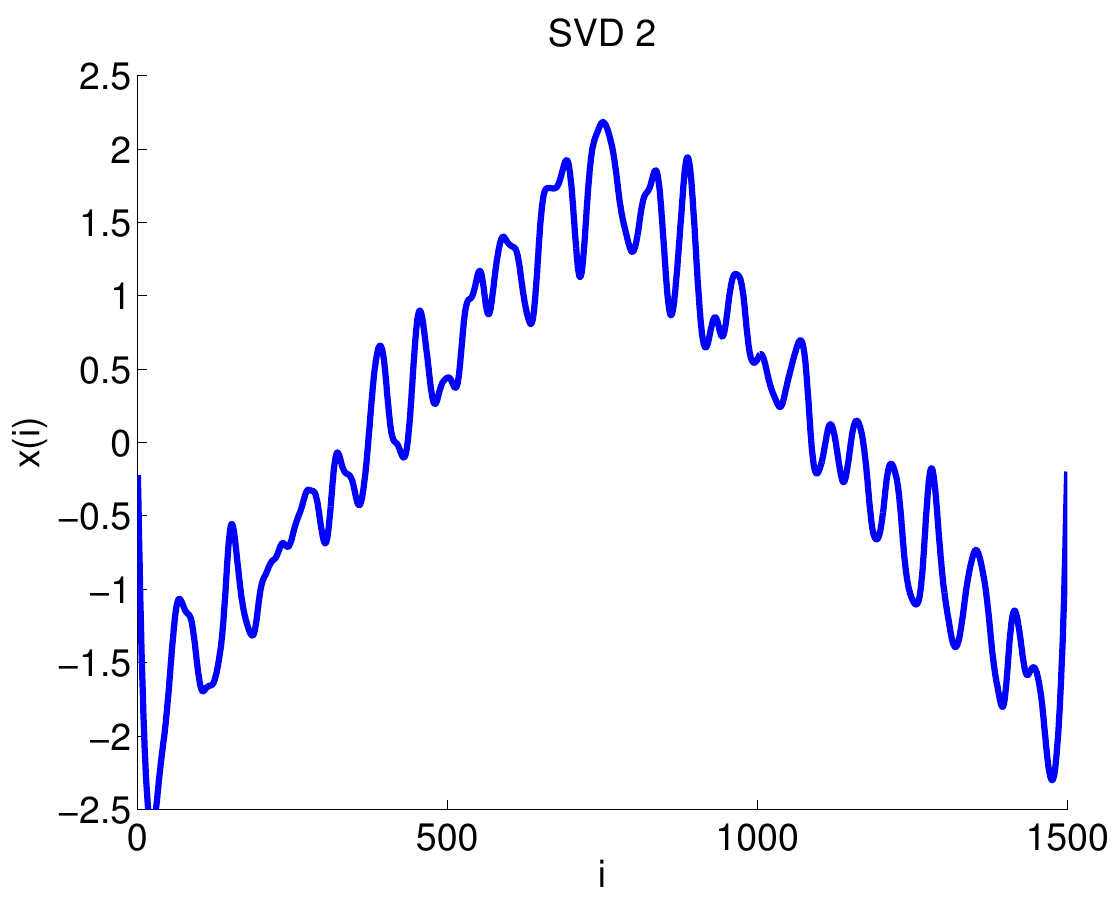}
\includegraphics[scale=0.21]{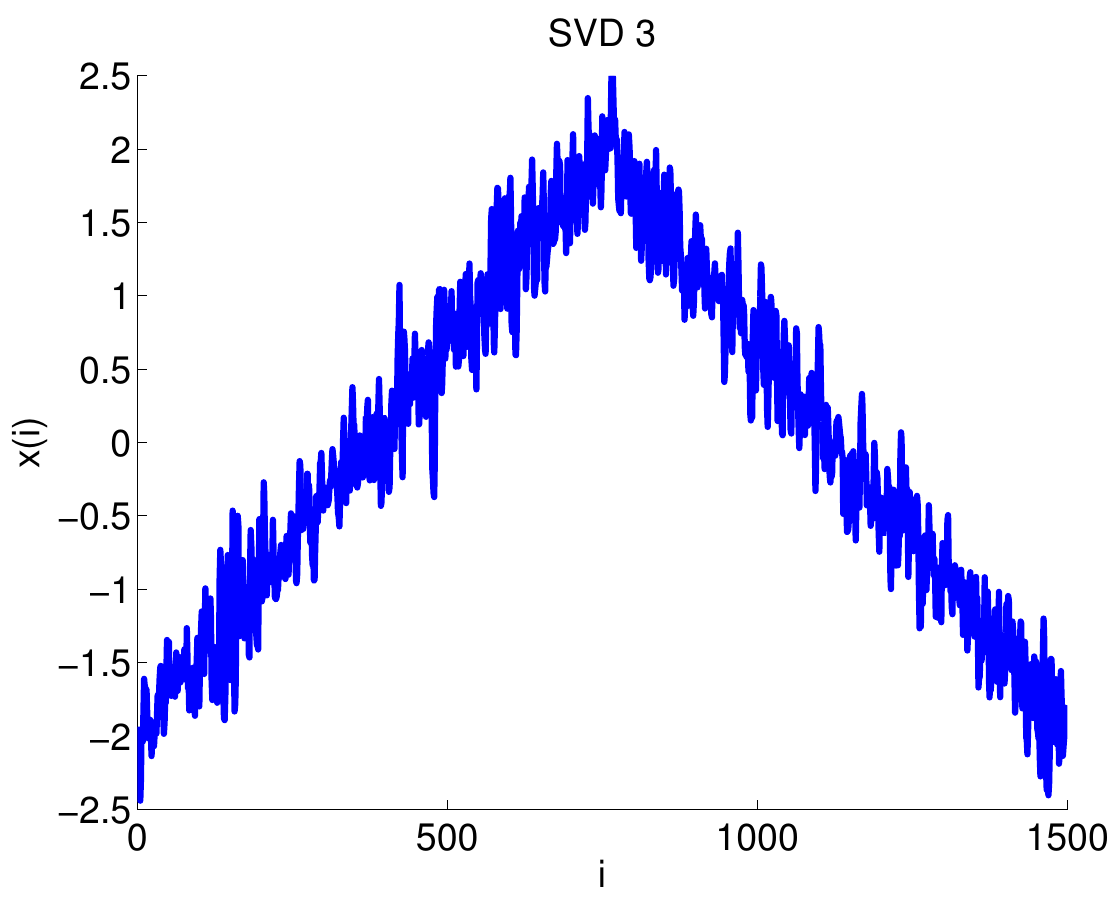}
}
\caption{Row 1: Actual signal $x$ and reconstructions using the full matrices 
$A_{(1)}, A_{(2)}, A_{(3)}$. Row 2: Bar plots of noise norm and solution residual norm values 
for each matrix system. Rows 3-5: Plots of reconstructed solutions using 
the different compressed schemes with wavelet compression and low rank SVD 
for $A_{(1)}$ (row 3), $A_{(2)}$ (row 4), $A_{(3)}$ (row 5). For each SVD solution shown, the low rank 
SVD was obtained via the corresponding $M_{(i)}$ matrix.}
\label{fig:synthetic_data2}
\end{figure*}

\newpage

\subsection{Examples with Real Data}
We now illustrate examples with real data from an application in seismic tomography. We will keep our description of the problem and setup concise. Much details can be found 
in \cite{Simons.Loris.ea2011} and other mentioned references. In short, we have a matrix $A$ and a 
right hand side vector $b$ from which we would like to obtain a vector $x$ corresponding to corrections 
to a spherically symmetric model (which varies only with depth) of the seismic 
wave speeds in the Earth's interior. The idea is that these corrections can be 
used together with the spherically symmetric model in order to 
construct a three dimensional model of the wave speeds. The data comes from measurements made 
by seismometers on the surface of the Earth of different earthquakes in the Earth's interior. 

The rows of our matrix $A$ correspond to earthquake-receiver 
pairs, the number of which is very high (almost $3$ million). It is to our advantage 
to include as many such pairs as possible. The more rows we include, the more information we include in the system and the more detailed the solution 
and hence model, which can be obtained. Each row is constructed from  
a surface wave data set \cite{vanheijst99}, which has information corresponding 
to energy waves from earthquakes only close to the Earth's surface.  
The columns of the matrix correspond to the coordinate system that is used to 
grid the interior of the Earth between the surface and the core 
mantle boundary. Each row of the matrix $A$ is a sensitivity kernel
\cite{GJI:GJI426}, that is defined over a cubed-sphere coordinate system 
\cite{Ronchi199693}, in which the contents at the surface of a sphere of a given
radius are projected onto six faces of a cube. We divide the region 
within the Earth between the core-mantle boundary and the surface into $37$ 
depth layers each divided laterally into $6$ chunks subdivided 
into $128 \times 128$ voxels. Each row of the matrix $A$ (a kernel) has 
information for each of the $37$ depth layers (corresponding to different 
radii from the core-mantle boundary to the Earth's
surface) \cite{Simons.Loris.ea2011}. This translates into approximately 
$3.6$ million columns. 
%For finer resolution, even more voxels (columns) need to be used. 

The matrix $A$ is sparse, having approximately $1.5$ percent nonzeros. The 
resulting matrix is thus very large: the dimensions of the 
matrix $A$ are $2,968,933 \times 3,637,248$ 
and it is approximately $3$ TB in size on the disk in a double precision 
sparse format. The reason for the large size is apparent from a 
typical sparse storage scheme which stores the dimensions, 
the total number of nonzeros, the number of nonzerors 
in each row (or column), and the column (or row) indices of all the nonzeros, 
followed by the floating point values of all the nonzeros. 
We typically use integers to represent everything but the floating point 
values for which we use floats or doubles. The resulting binary file can easily 
be several terabytes in size when the dimensions and number of nonzeros are large. 

Since the matrix $A$ is too large for us to handle directly, 
we split the matrix $A$ into $20$ different blocks: 
\begin{equation*}
A = 
\begin{bmatrix}
A_1 \\
A_2 \\
\vdots \\
A_{20} 
\end{bmatrix}.
\end{equation*}
In our illustrations, we will use also the smaller submatrix $A_1$ of 
the full matrix $A$. The submatrix has dimensions $438,674 \times 3,637,248$ and is 
about $115$ GB in uncompressed form. We can load this matrix into memory. 
In Figure \ref{fig:singular_vals_A1_and_A}, we show the 
fist $2000$ singular values of $A_1$ and $A$ (approximated numerically via the randomized 
low rank SVD algorithm) with the first 
singular value scaled to be $1$. We note that the 
singular values of $A$ drop off significantly faster than those of $A_1$ because $A$ is a 
much larger matrix with significantly more linear dependence. This type of singular value 
behavior is common for matrices from similar applications, so as we illustrate later in this 
section, the low rank approximation techniques 
we describe here work relatively well even when the rank $k$ is marginal compared 
to matrix dimensions. It's important to note again that our schemes rely mostly on operations with the 
$A_1^T A_1$ and $A^T A$ matrices for which the decay of the singular values is very rapid, being the square of 
the illustrated rate for $A_1$ and $A$.

In order to get an idea of the structure and wavelet compressibility 
of our matrices, we take a look at a randomly chosen row of $A$, 
which represents a sensitivity kernel and its representation with different numbers of 
wavelet coefficients as per \eqref{eq:wavelet_approx_for_x}, using the same CDF $9-7$ transform as before. 
In Figure \ref{fig:kernel_compression1}, 
we plot the sensitivity kernel near the surface of the 
Earth (at $135$ km depth). That is, we plot part of a row of matrix, 
representing a certain depth layer near the surface. From the figure, we can clearly see that the kernel looks like a continuous image and is hence similar to a row of matrix $A_{(3)}$ in the previous section, which as we 
saw, was wavelet compressible. In the top of Figure \ref{fig:kernel_compression1}, the leftmost plot is the 
original kernel while the rightmost plot is the reconstructed kernel with 
about $10$ percent of the coefficients retained after transforming. 
We see a notable degradation in quality. However, 
when we keep about $25$ percent of the largest coefficients, we have much less noticeable 
reconstruction error. We clearly observe that while some details are lost as less coefficients are 
retained, the majority of the structure is preserved. We have performed such plots of several randomly chosen 
rows and we conclude that our matrix $A$ is at least as good for wavelet compression 
as synthetic matrix $A_{(2)}$ (where 
at least a subset of the rows compressed well), 
but likely significantly better, with most rows being wavelet compressible. In Figure 
\ref{fig:kernel_compression1}, we also plot a curve of the percent error 
$E = 100 \frac{\| r - \left(W^{-1} \left( \Thr(W r^T) \right)\right)^T \|}{\|r\|}$
versus the percent of coefficients retained by the thresholding function. By percent coefficients 
retained we mean the quantity $100 \frac{\mbox{nnz}\left(\Thr(W r^T)\right)}{\mbox{nnz}\left(W r^T\right)}$, 
where $r$ is either the whole row vector or part of a row (corresponding either to all depth layers or to a 
certain depth near the surface) and nnz is the number of nonzeros. 
Notice that the error over all depths (all the entries of the kernel row) 
is greater than just at the particular depth layer at which it is plotted; 
but it is acceptable as long as we keep about $25$ percent or more coefficients after transforming.  

Since we find that the rows of $A$ are in large part wavelet compressible, we will again use 
wavelet compression and the low rank SVD, in order to approximate 
matrix vector operations with the matrices $A$ and $A_1$ and the solutions:
\begin{equation*}
(A_1^T A_1 + \lambda I) \bar{x}_1 =  A_1^T b \quad \mbox{and} \quad (A^T A + \lambda I) \bar{x}_2 = A^T b \quad \mbox{and} \quad (A^T A + \lambda_1 I + \lambda_2 L^T L) \bar{x}_3 = A^T b .
\end{equation*}
with $L$ a Laplacian smoothing operator, which we build from scratch as a sparse 
matrix. Just as with our synthetic data examples, we first obtain 
the wavelet thresholded matrices $M_1$ and $M$ corresponding to 
$A_1$ and $A$ and use these smaller matrices to obtain the low rank SVD of 
the $A_1$ and $A$ matrices, to achieve further compression. Notice also that as our data comes from a surface 
wave data set, the resolution of our inversions is primarily limited to a region 
close to the Earth's surface, a point we remind the reader of several times in this section. 

\newpage

\begin{figure*}[ht!]
\centerline{
\includegraphics[scale=0.30]{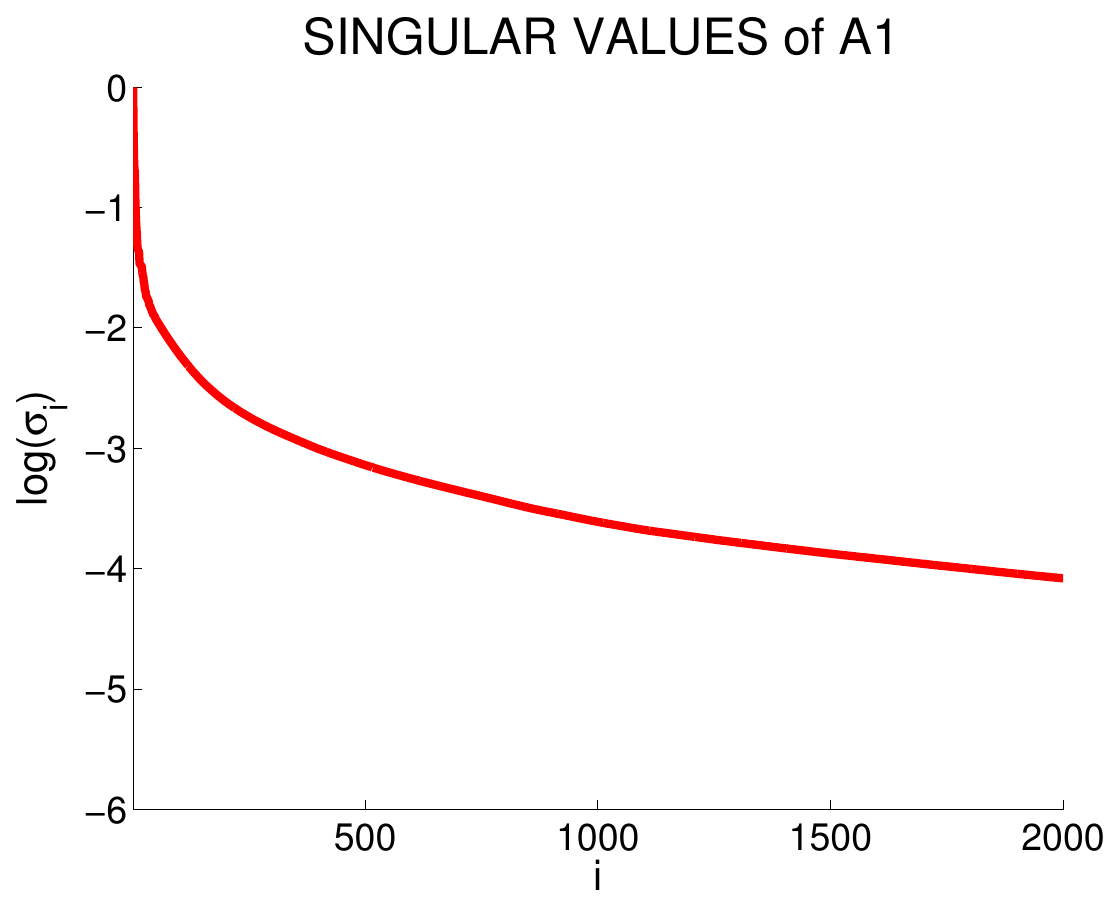}
\includegraphics[scale=0.30]{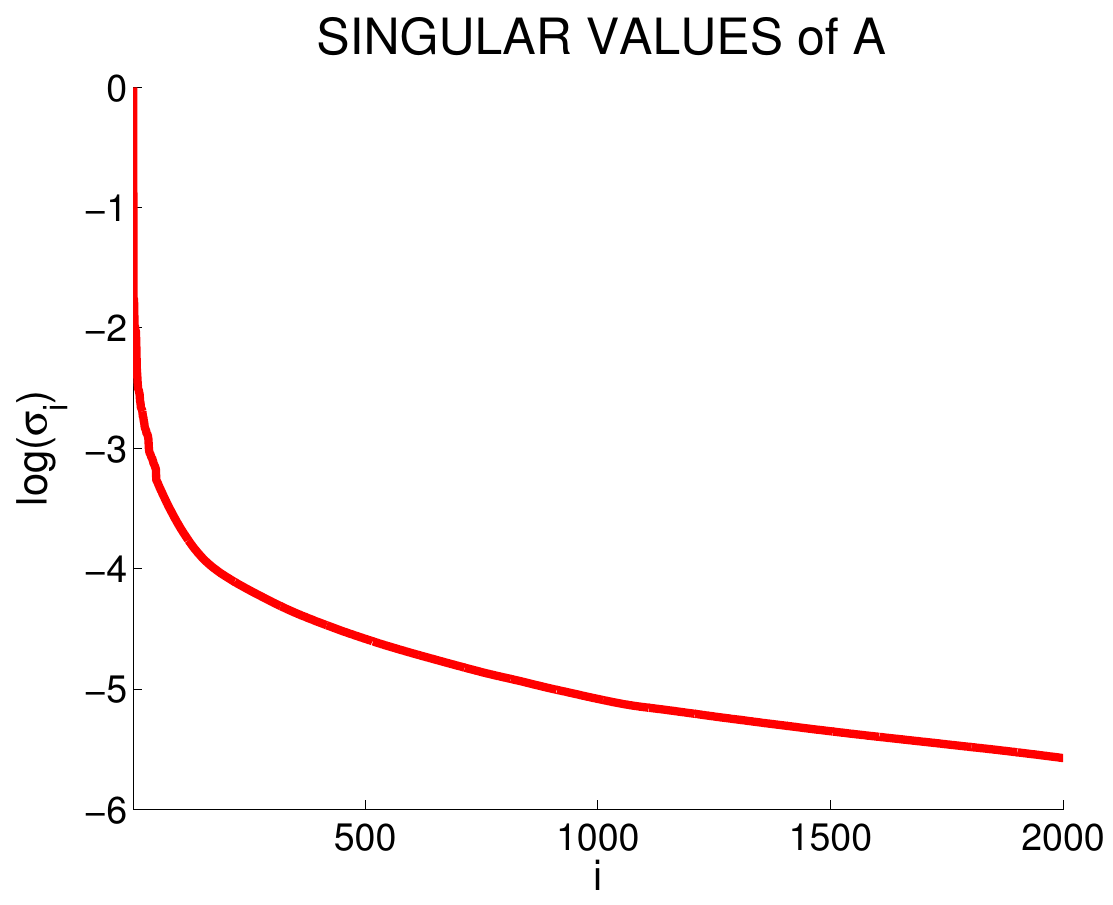}
}
\caption{First $2000$ singular values of $A_1$ and $A$ (numerically approximated)}
\label{fig:singular_vals_A1_and_A}
\end{figure*}

\begin{figure*}[ht!]
\centerline{
\includegraphics[scale=0.4]{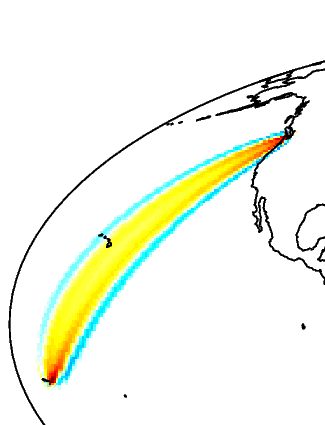}
\quad
\quad
\quad
\quad
\includegraphics[scale=0.4]{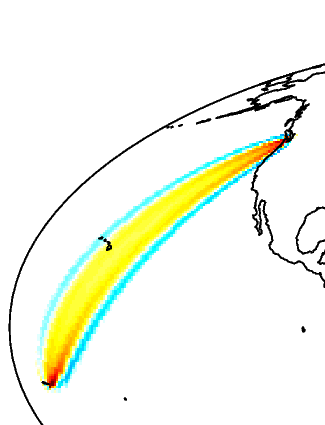}
\includegraphics[scale=0.4]{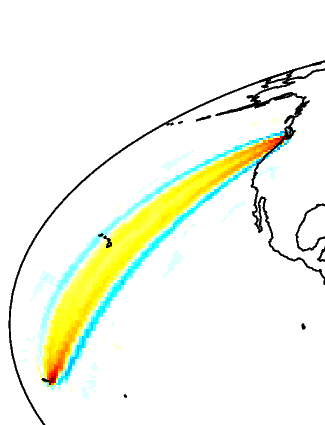}
\includegraphics[scale=0.4]{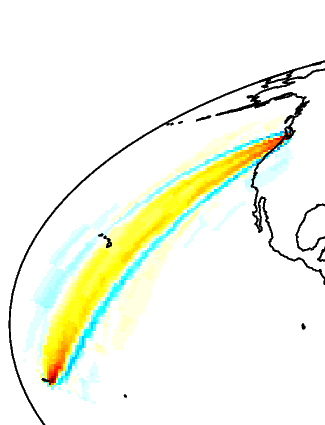}
}
\vspace{3.mm}
\centerline{
\includegraphics[scale=0.35]{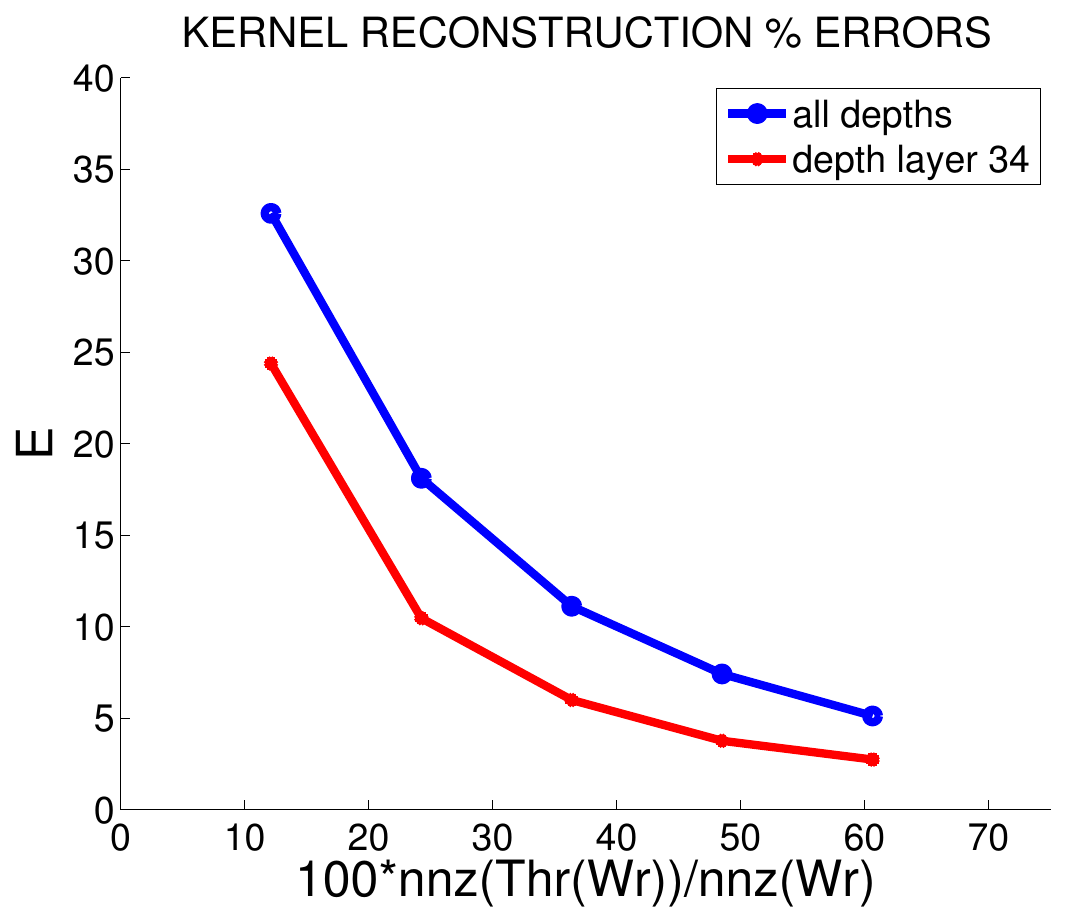}
}
\caption{Original kernel $r$ and reconstructed compressed kernels 
$\left(W^{-1} \left( \Thr(W r^T) \right)\right)^T$ (at $135$ km depth) with different numbers of 
coefficients retained after thresholding: approximately $48$, $24$, and $10$ percent coefficients, 
respectively. The bottom plot shows the percent error curve between the reconstructed and 
original kernel versus the number of nonzeros retained: errors for all depths and only for the 
displayed depth are shown.}
\label{fig:kernel_compression1}
\end{figure*}

\subsubsection{Wavelet and SVD compression with smaller matrix $A_1$}
We now discuss the results of some experiments with matrix $A_1$ which was just small enough 
for us to load in RAM in uncompressed form. 
We form the corresponding wavelet thresholded matrix $M_1 = \Thr(A_1 W^T)$ by replacing each 
row $r$ of $A_1$ by $\left(\Thr(W r^T)\right)^T$. 
We retain one third of the largest coefficients by absolute magnitude. 
The full matrix $A_1$ is of size $115$ GB 
while the matrix $M_1$ computed with our chosen threshold comes out to be $35$ GB. 
In Figure \ref{fig:wavelet_compression_errors_in_matvecops} 
we show the errors that result when we use the compressed matrix $M_1$ to approximate 
matrix vector operations with $A_1$. For $50$ random Gaussian vectors $x$ and $y$ compatible 
with the dimensions of $A_1$ and $A_1^T$, we plot the percent errors  
between $A_1 x$ and $M_1 W^{-T} x$, $A_1^T y$ and $W^{-1} M_1^T y$, 
and between $A_1^T A_1 x$ and $W^{-1} M_1^T M_1 W^{-T} x$. The error quantity for the first 
case is simply $E = 100 \frac{\| A_1 x - M_1 W^{-T} x \|}{\|A_1 x\|}$, as before in the synthetic data 
tests. 

We use the same CDF $9-7$ wavelet transform 
as in the synthetic tests for $W$, but do not build $W$ explicitly as a matrix 
and cannot obtain the inverse-transpose matrix $W^{-T}$ by transposing the inverse of $W$. 
This is because $W$ is a very large $n \times n$ matrix and is very costly to build for large $n$. 
Hence, we instead use a routine for applying $W$ and $W^{-T}$ to vectors. 
Unlike with synthetic data where $W^{-T}$ is exact, the implemented routine for the inverse 
transpose transform is approximate. We programmed the inverse transpose routine by applying 
the forward transform with the inverse filters but it did not exactly equal to the inverse 
of the transpose of $W$ because of complicated boundary data treatment. We see that this 
increases the errors somewhat when approximating matrix-vector operations with $A_1$ and $A_1^T A_1$.  
We see that the error for approximating the operation $A_1^T A_1 x$ is for some vectors  
higher than the approximation for $A_1 x$ and $A_1^T y$. However, from the figure we see that 
all operations are approximated with errors below about $20$ percent 
(which, although significant, will not give rise to large errors in regularized solutions). 

Next, as we previously did with synthetic data, we go on to compute the approximate low rank SVD of 
$A_1 \approx U_{1_k} \Sigma_{1_k} V^T_{1_k}$ using  
the wavelet compressed matrix $M_1$ to approximate matrix-vector operations with $A_1$ in the 
randomized low rank SVD algorithm. The dimensions and sizes of the various matrices turn out as follows:
\begin{itemize}
\item $A_1$, dimensions $(438,674 \times 3,637,248)$, size is $115$ GB 
\item $M_1$, dimensions $(438,674 \times 3,637,248)$, size is $35$ GB
\item $U_{1_k}$, $\Sigma_{1_k}$, $V_{1_k}$, 
dimensions $(438,674 \times 2000), (2000 \times 2000), (3637248 \times 2000)$, sizes are $7$ GB, 
$30$ MB, $55$ GB ($\approx 62$ GB total)
\end{itemize} 
We show the errors that result in approximating matrix-vector 
operations with $A_1$ and $A_1^T$ using the low rank SVD in the same Figure 
\ref{fig:wavelet_compression_errors_in_matvecops} where we plot, for $50$ randomly generated vectors 
$x$ and $y$, percent errors between $A_1 x$ and $U_{1_k} \Sigma_{1_k} V^T_{1_k} x$, 
$A_1^T y$ and $V_{1_k} \Sigma_{1_k} U^T_{1_k} y$, and between 
$A_1^T A_1 x$ and $V_{1_k} \Sigma^2_{1_k} V^T_{1_k} x$. The error quantity for the first 
case is simply $E = 100 \frac{\| A_1 x - U_{1_k} \Sigma_{1_k} V^T_{1_k} x \|}{\| A_1 x \|}$, 
as before in the synthetic data tests. From the figure we see that for approximating the 
$A_1^T A_1 x$ operation with $V_{1_k} \Sigma^2_{1_k} V^T_{1_k} x$, the 
errors are similar to those obtained via the wavelet thresholded $W^{-1} M_1^T M_1 W^{-T} x$ approximation, 
though they do jump to about $50$ percent for a few vectors in the set. 
The errors are significantly lower for the approximated $A_1^T A_1 x$ 
operation then for operations with $A_1$ or $A_1^T$ individually. This is because the decay of singular 
values of $A_1^T A_1$ is much more rapid than that of $A_1$ and the matrix is thus well approximated 
with a low rank $k$. Notice, however, that for the low rank SVD of $A_1$, 
the total size of the SVD components (which are not sparse matrices) is greater than the size 
of the matrix $M_1$. Hence, it may not be very practical to use the low rank SVD decomposition for this 
smaller matrix. However, it is useful to use in this case for illustrative purposes.
%unless one is interested in using the $k \times k$ approach.

We go on to obtain some approximate regularized solutions using the wavelet compressed 
matrix $M_1$ and the low rank SVD components $U_{1_k}, \Sigma_{1_k}, V_{1_k}$ and compare 
to the full solution we get with matrix $A_1$. The solutions we plot in 
Figure \ref{fig:full_and_approx_solutions_for_A1} are obtained by doing $250$ iterations 
of the CG algorithm for the systems listed below.
\begin{equation}
\label{eq:approxSolnA1}
\begin{array}{rcll}
 (A_1^T A_1 + \lambda I) x_1 &=& A_1^T b_1 
   &\quad \mbox{\footnotesize solution with full matrix } A_1 
\\
 (W^{-1} M_1^T M_1 W^{-T} + \lambda I) x_2 &=& W^{-1} M_1^T b_1
   & \quad \mbox{\footnotesize wavelet compressed solution with } M_1 
\\
 (V_{k_1} \Sigma_{k_1}^2 V_{k_1}^T + \lambda I) x_3 &=& V_{k_1} \Sigma_{k_1} U_{k_1}^T b_1 
   & \quad \mbox{\footnotesize replacing all instances of } A_1 \mbox{ by low rank SVD}
\\
 (V_{k_1} \Sigma_{k_1}^2 V_{k_1}^T + 5 \lambda I) x_4 &=& W^{-1} M_1^T b_1 
   & \quad \mbox{\footnotesize using the low rank SVD only on the left hand side}
\end{array}
\end{equation}
In the figure, we plot the solution at a certain depth near the surface because the data set we used  
in the construction of $A$ (and hence $A_1$) is a surface wave data set, so there is minimal resolution 
far down from the surface. We mention more on this later in this section. At the depth we show, 
the differences between the solutions are very small. The SVD solutions do show some minor degradations. We have observed the same behavior slightly above and below the 
current depth: that is, for all regions where we have significant resolution with our data set.    
Notice that the wavelet compressed solution $x_2$ is very close to the full solution. With $x_3$ and 
$x_4$ small differences can be observed. The latter solution $x_4$ actually reveals somewhat more 
details than $x_3$. Note also that in Figure \ref{fig:full_and_approx_solutions_for_A1} we plot the 
depth profiles for each solution, where we show a depth slice for a section of the Earth, from the surface 
to the core mantle boundary. As expected, nonzero data is only present at depth layers 
near the surface and the quality of the approximations decrease at the bottom layers. The loss 
of detail with the low rank SVD solutions at the lower layers is visible in these plots. 

Also in Figure \ref{fig:full_and_approx_solutions_for_A1} we show the plots of solution norm
and $\chi^2$ value versus iteration for the different solutions. The norm of the solution is the $\ell_2$ norm of the iterate
$x^n$ at iteration $n$. The $\chi^2$ value is calculated using the formula:
\begin{equation*}
\chi^2 = \frac{1}{P} \displaystyle\sum_{k \textup{ not outlier}} |r^n_k|^2,
\end{equation*}
where $r^n = A_1 x^n - b$ and $P = m - m_0$ (number of rows minus number of outliers).
For each datum, we estimate standard errors in the data before inversion, then scale
the system to be univariant (i.e. all standard errors are equal to 1). We define outliers
as entries of the vector $r^n$ that are not within three standard errors.
In the inversions we present, the outliers are identified
after $5$ and $25$ iterations, corresponding to dips in the $\chi^2$ that may be seen in the plots.
Since our systems are univariant, we would like for the $\chi^2$ of the converged solution
to be close to one. However, this is not possible for this data set without including
extra correction terms for spatial uncertainty in the earthquake coordinates and instrument error 
in the data. Hence the $\chi^2$ values are quite a bit higher.
In the figure, we can see that the curves for the full and wavelet 
thresholded case are very close to each other; 
the first SVD solution has a lower norm and the second a slightly higher solution norm 
at the chosen value of $\lambda$. 

For the $\chi^2$ calculation, we calculate the product $A_1 x^n$ using the full matrix $A_1$ in 
the solution $x_1$, using the approximation $M_1 W^{-T} x^n$ in the solution $x_2$, and 
using the approximation $U_{1_k} \Sigma_{1_k} V^T_{1_k} x^n$ in the solutions $x_3$ and $x_4$.
Notice that since the operation $A_1 x^n$ is not as well approximated as the operation $A_1^T A_1 x^n$, 
we have a noticeable difference in $\chi^2$ values between solutions $x_1$ and $x_2$ and between 
$x_3$ and $x_4$. For the latter two solutions, the calculated $\chi^2$ value comes out 
higher than it really is. To illustrate this fact, we include in Figure 
\ref{fig:full_and_approx_solutions_for_A1} a bar plot which shows the 
the mean $\chi^2$ value after $50$ iterations from the two SVD solutions $x_3$ and $x_4$ computed using the
low rank matrix $U_{1_k} \Sigma_{1_k} V_{1_k}^T$ and using the full matrix $A_1$. 
The same solutions have correspondingly lower $\chi^2$ values when the residual
$r^n = A_1 x^n - b_1$ is approximated via $A_1 x^n - b_1$ instead of
$U_{1_k} \Sigma_{1_k} V_{1_k}^T x^n - b_1$. Thus, while the solutions themselves are approximated 
well with the SVD approximations, quantities such as $\chi^2$ which involve calculations with $A_1$ 
instead of $A_1^T A_1$ can be far less accurate when computed with the low rank SVD matrices. 
Given the results with the matrix $A_1$, we summarize a few key points which we observe. 
\begin{itemize}
\item In the case of matrix $A_1$ which is not so large, wavelet thresholding makes the most sense, as the 
low rank SVD does not provide compression, unless the $k \times k$ methods are used. This is because 
the low rank SVD matrices are dense while the original matrix is sparse.
\item Approximate solutions with both wavelet thresholding and the low rank SVD are quite accurate compared
to those with the full matrix.
\item In matrix vector operations, the error in the approximation to operations with $A_1^T A_1$ 
is significantly less than for the approximations to operations with $A_1$ and $A_1^T$. Hence, 
quantities such as $\chi^2$ are not accurately computed if the low rank SVD matrix is used to compute 
the residual; instead one should use the wavelet compressed or full matrix (for one computation) 
to accurately estimate the $\chi^2$ value of the solution vector.
\item When computed with $A_1$ or $M_1$, the $\chi^2$ values for the approximate solutions are 
very similar to that of the full solution.
\item The difference between the full and approximate solutions becomes significant at lower 
depths, where the data set resolution is poor.
\end{itemize}

\begin{figure*}[ht!]
\centerline{
\includegraphics[scale=0.28]{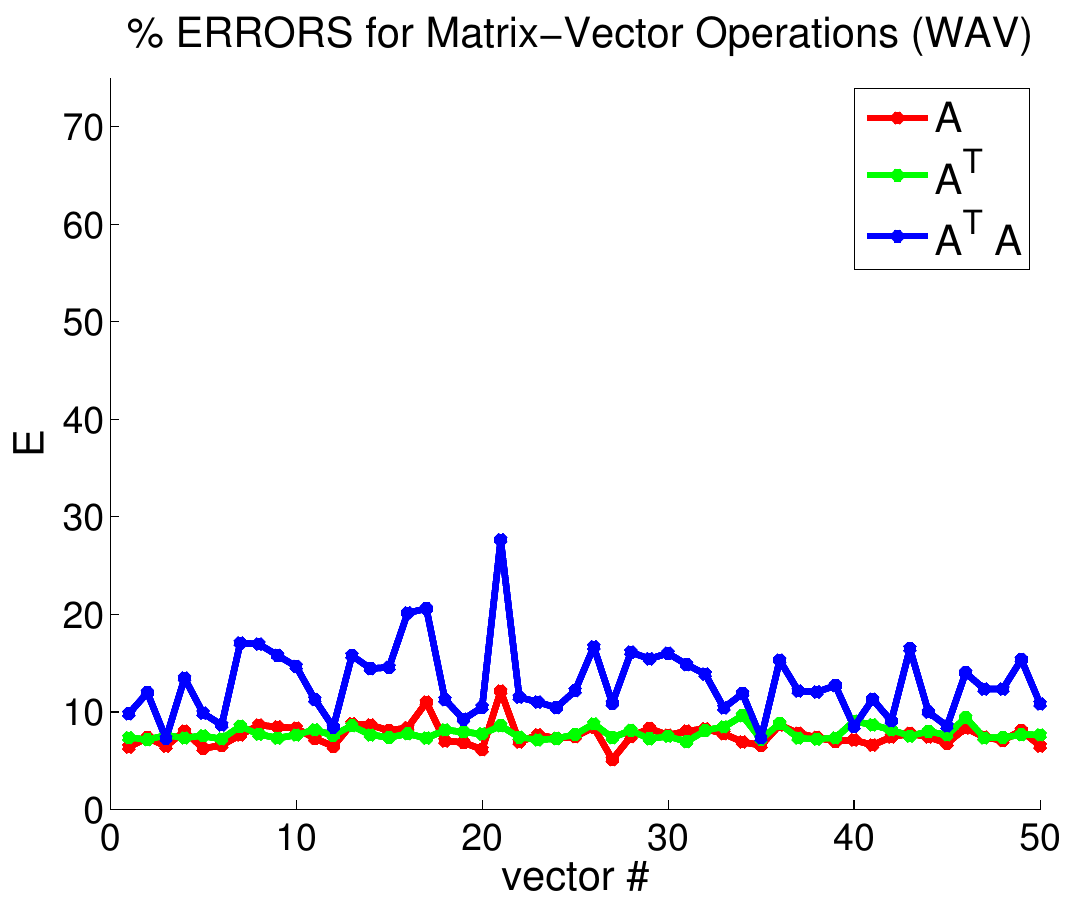}
\includegraphics[scale=0.28]{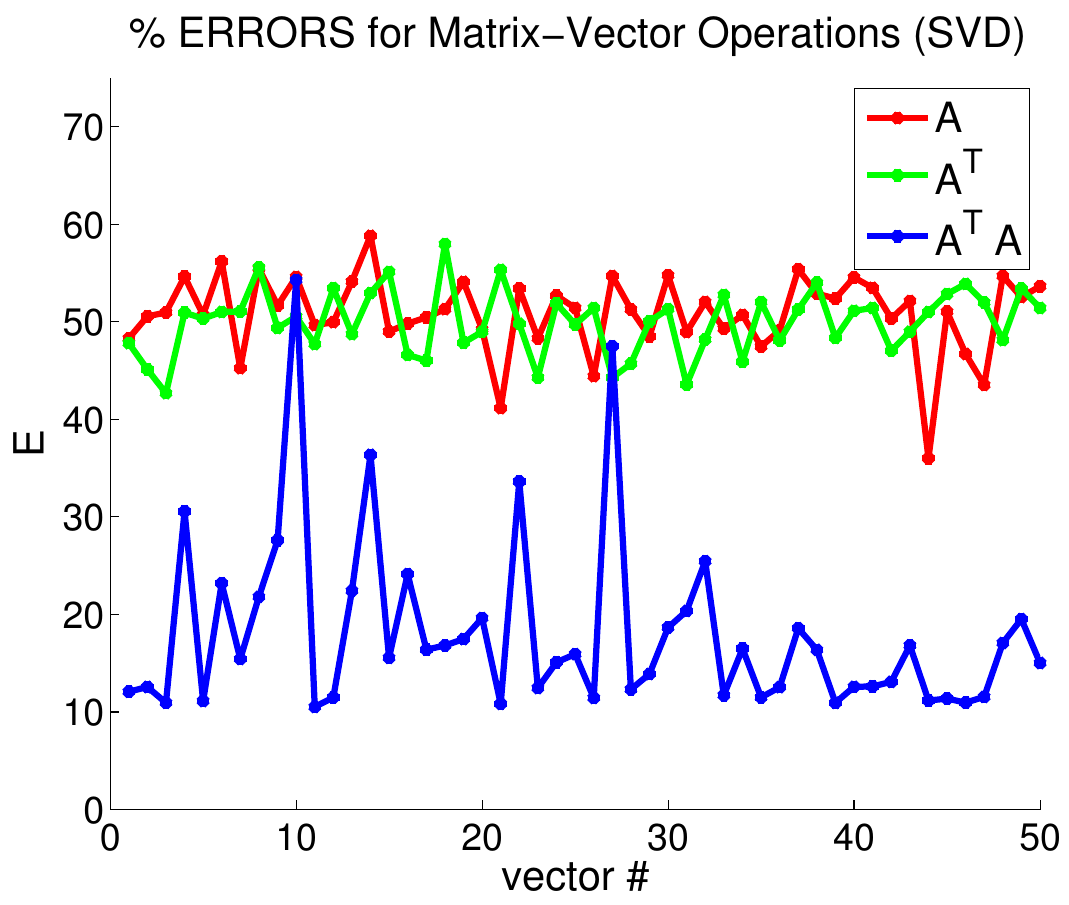}
}
\caption{Percent errors for $50$ Gaussian random vectors $x$ and $y$ between the vectors 
$A_1 x$, $A_1^T y$, $A_1^T A_1 x$ and their approximations through wavelet compressed and low rank SVD methods.}
\label{fig:wavelet_compression_errors_in_matvecops}
\end{figure*}

\newpage

\begin{figure*}[ht!]
\centerline{
\includegraphics[scale=0.40]{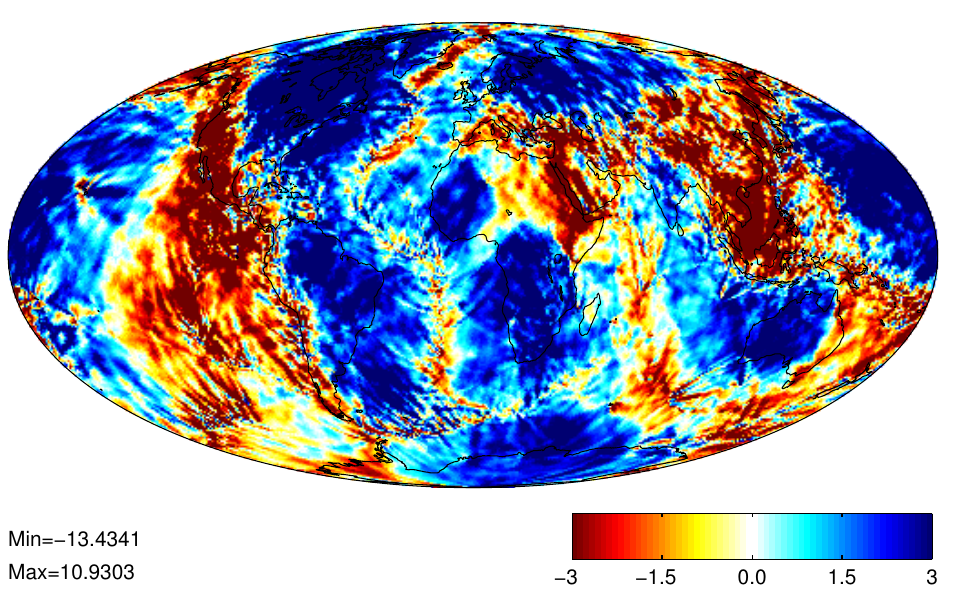}
\quad
\includegraphics[scale=0.40]{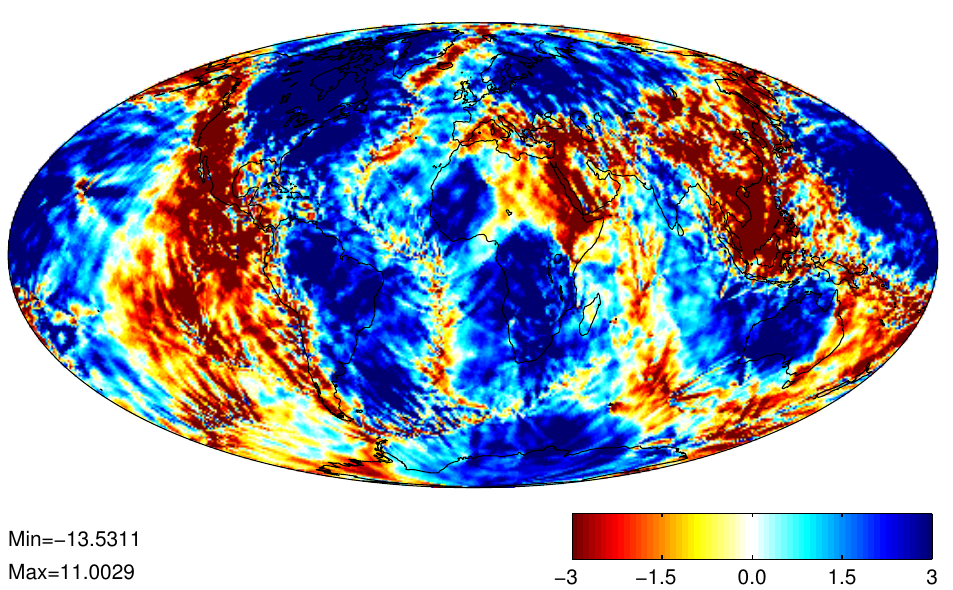}
}
\centerline{
\includegraphics[scale=0.40]{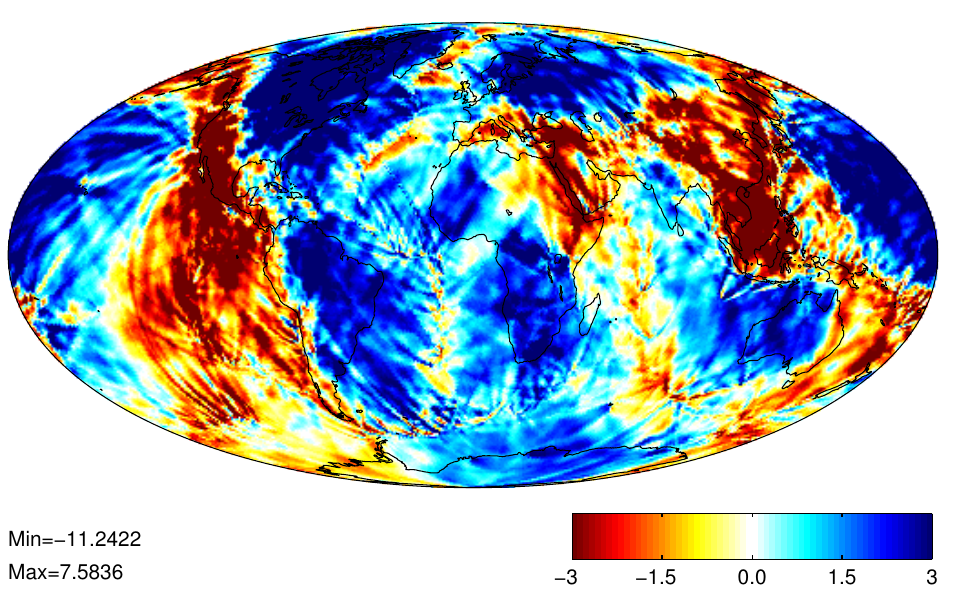}
\includegraphics[scale=0.40]{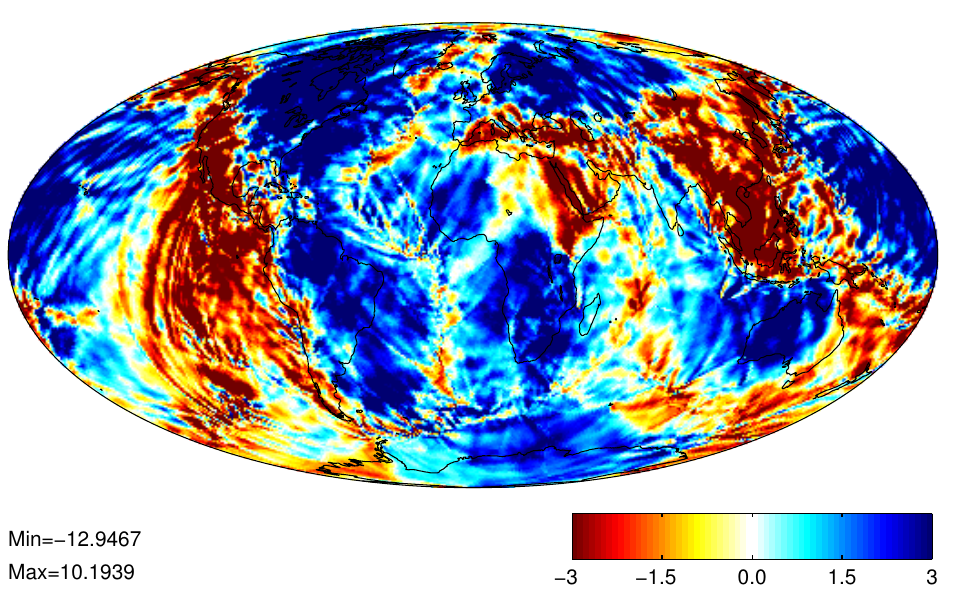}
}
%\centerline{
%\includegraphics[scale=0.35]{images_section_numerics_wavelet_compression_m1_k20_minor_runs_full1_section_slice33-eps-converted-to.pdf}
%\quad
%\includegraphics[scale=0.35]{images_section_numerics_wavelet_compression_m1_k20_minor_runs_wavecompressed1_section_slice33-eps-converted-to.pdf}
%}
\vspace{3.mm}
\centerline{
\includegraphics[scale=0.25]{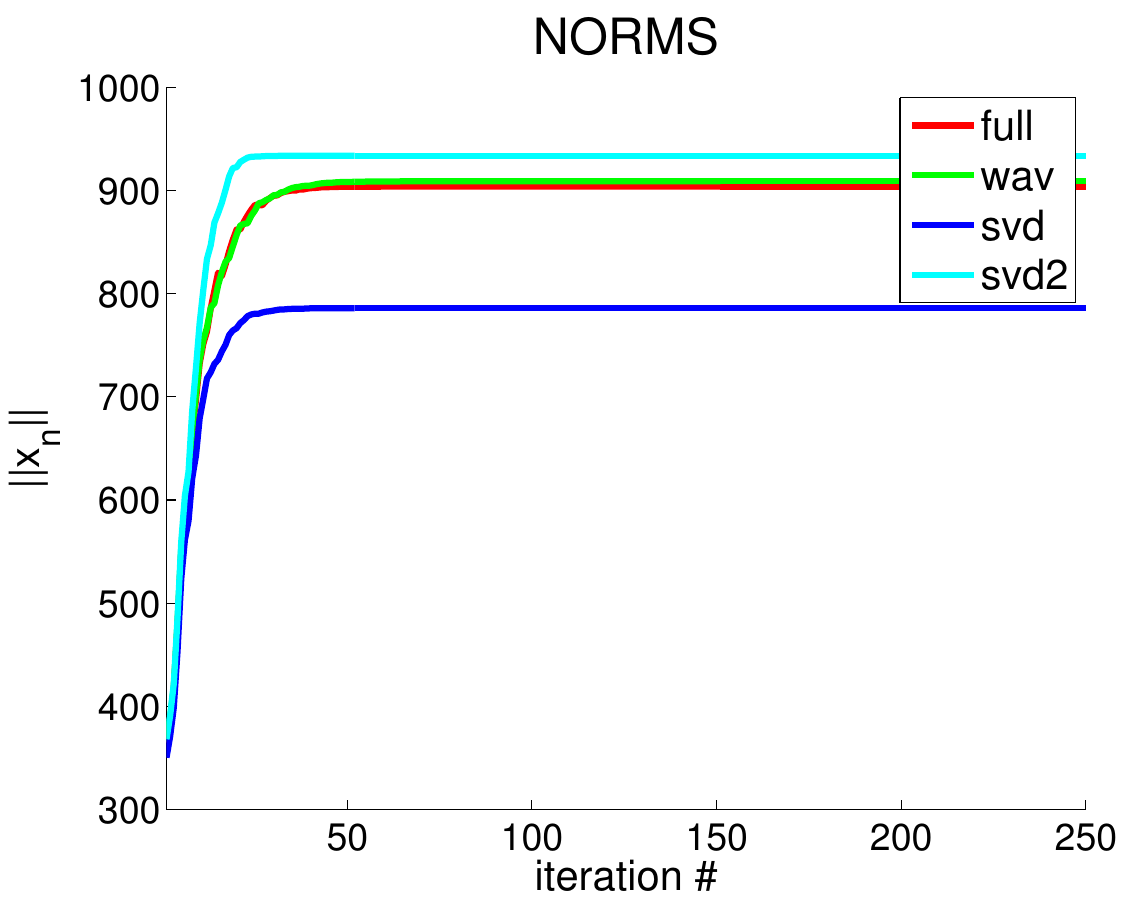}
\quad
\includegraphics[scale=0.25]{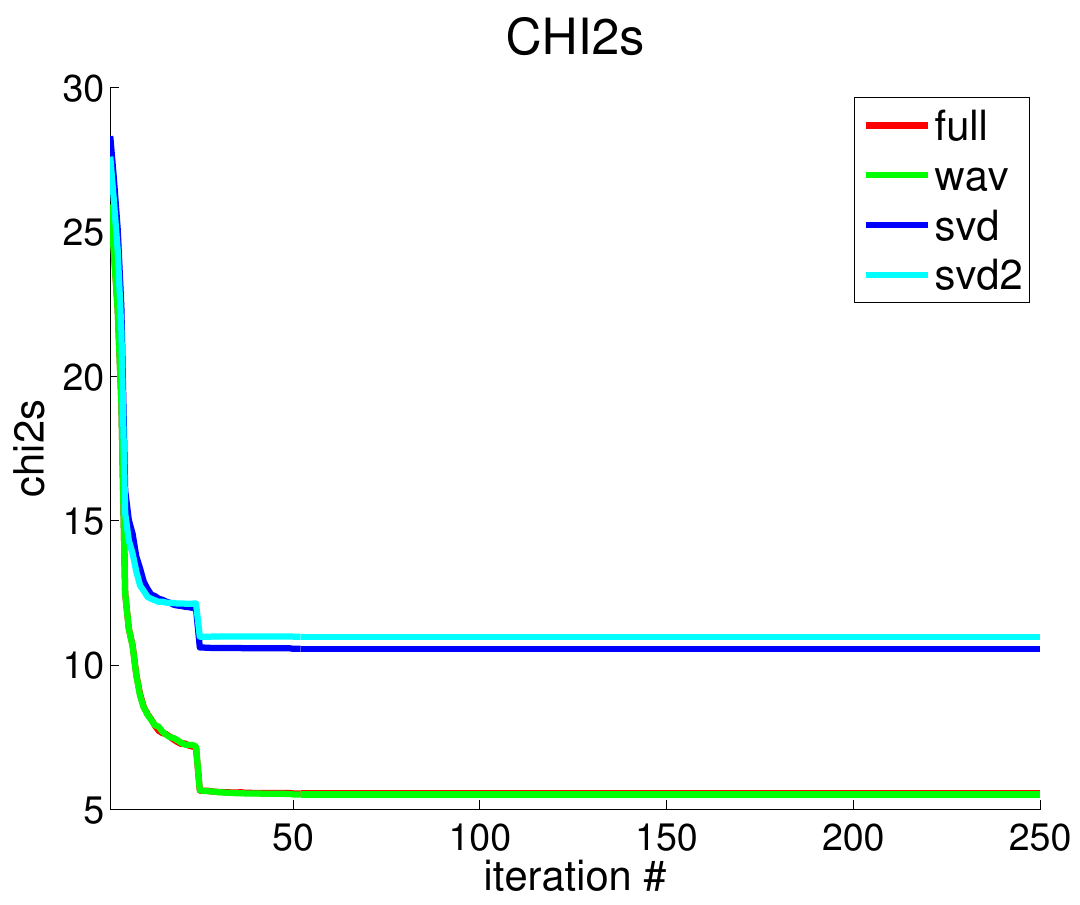}
\quad
\includegraphics[scale=0.25]{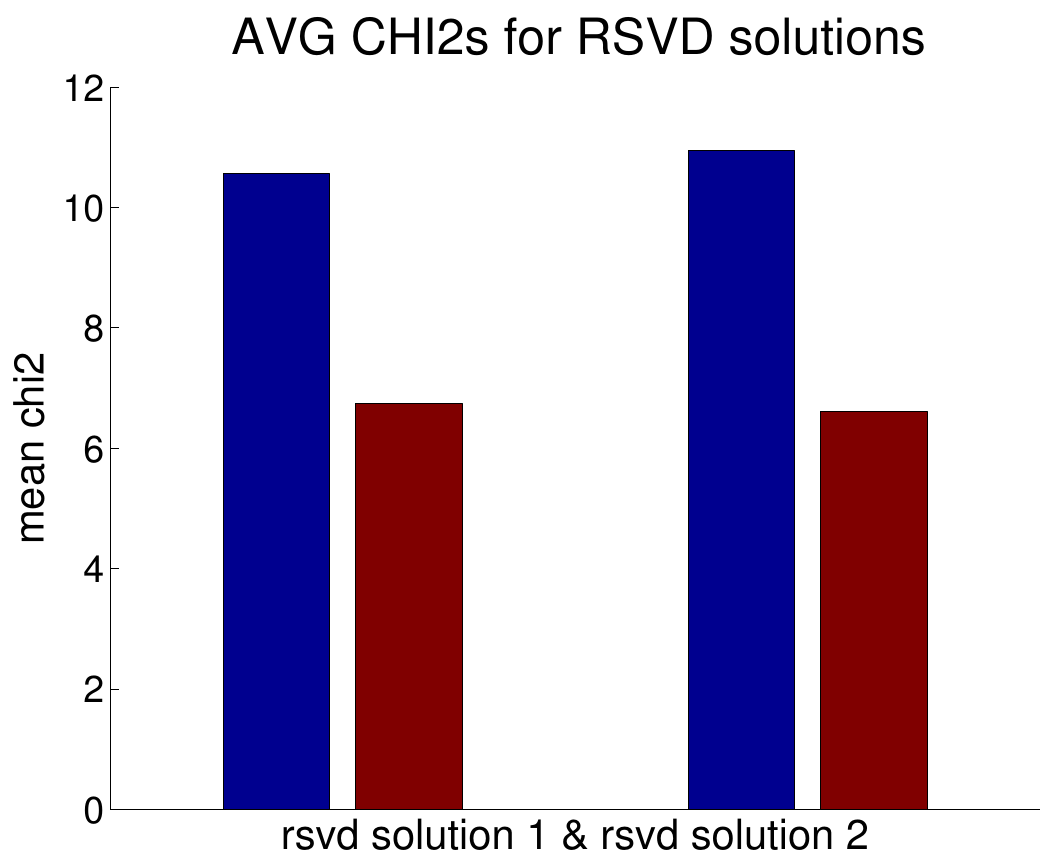}
}
\vspace{3.mm}
\centerline{
\includegraphics[scale=0.5]{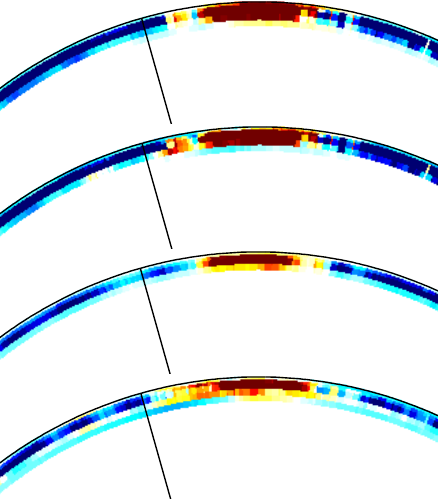}
}
\caption{Rows $1-2$: Regularized solutions $x_1$ (full matrix - row 1, left), $x_2$ (wavelet), $x_3$ (svd 1 - row 2, left), and $x_4$ (svd 2) 
from \eqref{eq:approxSolnA1} plotted at $135$ km depth. 
Row $3$: solution norms and $\chi^2$ values versus iteration, bar plot comparing average $\chi^2$ 
of the two SVD solutions computed using the low rank SVD matrix and the wavelet compressed matrix. Row $4$: depth profiles of the four solutions in a portion of the globe with variations (the top arcs represent the Earth's surface).} 
\label{fig:full_and_approx_solutions_for_A1}
\end{figure*}

\newpage

\subsubsection{Wavelet and SVD compression with matrix $A$}

We now describe some results of wavelet and low rank SVD compression for our very large matrix $A$. Due to the size of $A$, even after wavelet compression, 
the resulting $M$ is too big to load into RAM all at once on a single machine.  
For this reason, we do not compute the wavelet thresholded $M$ in one shot.
Instead we operate on blocks of $A$ at a time and construct the block based:
\begin{equation*}
M = 
\begin{bmatrix}
M_1 \\
M_2 \\
\vdots \\
M_{20} 
\end{bmatrix} = 
\begin{bmatrix}
\Thr(A_1 W^{T}) \\
\Thr(A_2 W^{T}) \\
\vdots \\
\Thr(A_{20} W^{T}) \\
\end{bmatrix}
\end{equation*}
This way, operations with $A$ can be approximated using relations \eqref{eq:block_wavelet_operations} 
and the components of $M$ can be stored in parallel over several different machines. 

We now state the sizes and dimensions of the matrices involved:
\begin{itemize}
\item $A$, dimensions $(2,968,933 \times 3,637,248)$, size is $3.2$ TB (approximate, never computed)
\item $M$, dimensions $(2,968,933 \times 3,637,248)$, size is $1$ TB
\item $U_k$, $\Sigma_k$, $V_k$, dimensions $(2,968,933 \times 2000), (2000 \times 2000), (3,637,248 \times 2000)$, sizes are $45$ GB, $30$ MB, $55$ GB ($\approx 100$ GB total) 
\end{itemize} 
Notice that in this case, for the much larger matrix $A$, the low rank SVD 
provides for very substantial memory savings. 

Since we cannot use $A$ directly, we can only compare results with the wavelet 
compressed matrix $M$ to results obtained with the low rank SVD 
$A_k = U_k \Sigma_k V^T_k$. As before, we have first formed $M$ and then used $M$ in the 
randomized SVD scheme to form the approximate low rank SVD of $A$. We again used $k=2000$ 
(a very small number relative to the dimensions of $A$).  
In Figure \ref{fig:wavelet_vs_svd_compression_mat_vec_errors}, we plot the percent errors 
for matrix vector operations done with the computed low rank SVD compared to those approximated 
via the wavelet thresholded matrix $M$. We plot the percent errors for 
$50$ random Gaussian vectors $x$ and $y$ compatible with the dimensions 
of $A$ and $A^T$: that is, between $M W^{-T} x$ (approximating $A x$) and 
$U_k \Sigma_k V^T_k x$, $W^{-1} M^T y$ (approximating $A^T y$) and 
$V_k \Sigma_k U_k^T y$ and between $W^{-1} M^T M W^{-T} x$ 
(approximating $A^T A x$) and $V_k \Sigma_k^2 V_k^T x$. 
The error quantity for the first 
case is simply 
$E = 100 \frac{\| M W^{-T} x - U_k \Sigma_k V^T_k x \|}{\| M W^{-T} x \|}$.
The plots again indicate that the operation $A^T A x$ is likely to be 
well approximated even with a low rank $k$ we choose. In this case, for the large $A$, 
the singular values of $A^T A$ decay very rapidly, with the square of the decay rate
observed in Figure \ref{fig:singular_vals_A1_and_A}.

As previously mentioned, the matrices we use come from a surface wave 
data set \cite{vanheijst99}, such that only the top few depth layers near the surface carry nonzero 
information and even the bottom of these layers can already offer 
limited resolution. Thus the quality of approximations 
can vary somewhat for different depth layers. In order for the reader to 
have an idea of the data set we use, we present some checkerboard 
reconstructions using the matrix $A$ and a synthetically constructed 
checkerboard model $x_{\textit{chk}}$. We define $x_{\textit{chk}}$ 
to be a checkerboard grid, over the top few layers (near the surface). The result is 
plotted in Figure \ref{fig:checkerboard_output} using the 
depth profile (a cross-section plot showing the model representation over all depth layers) 
and corresponding cubed-sphere representations at certain depths 
(we plot at each depth layer shown the projection onto the six cube faces). 
Then we form $b = A x_{\textit{chk}}$ and solve the regularized system 
$(A^T A + \lambda I) x_{\textit{chkrec}}  = A^T b$ 
with $\lambda = 5$. We plot the solution $x$ in Figure \ref{fig:checkerboard_output} using the 
same formats. We use the wavelet transformed 
and thresholded matrix $M = \Thr(A W^T)$ to approximate the matrix vector operations with $A$. 
The comparison between $x_{\textit{chk}}$ and the corresponding reconstruction $x_{\textit{chkrec}}$ 
gives us a summary of what the data set can pick up. In particular, we see 
from Figure \ref{fig:checkerboard_output} that the resolution is limited to layers near 
the surface and gets worse with increasing depth, as expected. 
Also and perhaps more important is that the checkers used are about the size of what we 
we can successfully resolve. We have tried using smaller checkers which did not lead to good reconstructions, 
even for depth layers near the surface. 

The checkerboard test shows the clear limitation of the matrix $A$: we are unable to resolve features 
at all depths, nor are we able to resolve particularly small 
features. Hence, we expect that we can safely use relatively high compression ratio approximation 
methods we have discussed (using aggressive thresholding with wavelet based approximation and small 
$k$ relative to matrix dimension in the SVD based schemes). 
Even though the solutions which result from these methods may not resolve some fine scale 
features in comparison with using the full (or even wavelet thresholded) matrix, it is important to keep 
in mind that these fine scale features which appears in the more detailed solutions may not be 
realistically explainable by the data we have available. This is true in many applications 
similar to ours. 

We will consider the following linear systems for approximating the regularized 
solution to $Ax = b$:
\begin{eqnarray*}
&& (W^{-1} M^T M W^{-T} + \lambda I) x_5 = W^{-1} M^T b \quad \mbox{ wavelet compressed solution for } A \\
&& (V_k \Sigma_k^2 V_k^T + \lambda I) x_6 = V_k \Sigma_k U_k^T b \quad \mbox{ replacing all instances of } A \mbox{ by low rank SVD}\\
&& (V_k \Sigma_k^2 V_k^T + 10 \lambda I) x_7 = W^{-1} M^T b \quad \mbox{ using the low rank SVD only on the left hand side}\\
\end{eqnarray*}
with $\lambda = 1$. We also show the following solutions 
corresponding to the system with Laplacian smoothing included:
\begin{eqnarray*}
&& (W^{-1} M^T M W^{-T} + \lambda_1 I + \lambda_2 L^T L) x_8 = W^{-1} M^T b \quad \mbox{ wavelet compressed with smoothing } \\
&& (V_k \Sigma_k^2 V_k^T + \lambda_1 I + \lambda_2 L^T L) x_9 = V_k \Sigma_k U_k^T b \quad \mbox{ SVD 1 with smoothing }\\
&& (V_k \Sigma_k^2 V_k^T + 10 \lambda_1 I + \lambda_2 L^T L) x_{10} = W^{-1} M^T b \quad \mbox{ SVD 2 with smoothing }
\end{eqnarray*}
The results for a depth layer close to the surface are given in Figure \ref{fig:svd_compression_x5_x6_x7}. 
Again, we find that the results for depth layers around the given depth are quite similar to what we present. 
We can readily notice the effect of the smoothing operator $L$ on the solutions. Notice that the  
wavelet compressed solution without smoothing offers a great deal of detail. However, based on our checkerboard 
experiments, it's unlikely that the smaller scale features we find in this detailed solution 
are real, since they are generally smaller than the checkers we used in our resolution test.
In the figure, we also plot the same plots as for the smaller matrix $A_1$, including plots of the solution 
norms, $\chi^2$ values, and of the depth profiles of the solutions $x_8, x_9, x_{10}$ (with Laplacian 
smoothing). We find similar behavior in the $3$ solutions without the Laplacian. 
As before, we plot a bar chart showing the $\chi^2$ of the SVD based solutions using the SVD and wavelet 
compressed matrices. We see similar behavior in the sense that if $\chi^2$ is computed with matrix $M$, 
it is close to that of the wavelet compressed solution. The depth profile plots in 
Figure \ref{fig:svd_compression_x5_x6_x7} show significant differences between the wavelet 
compressed and low rank SVD solutions at lower depths, although the resolution there is likely very low. 

Given the results with the big matrix $A$, we summarize a few key points which we observe. 
\begin{itemize}
\item Both the wavelet thresholded and the low rank SVD approach allow us to use much smaller matrices 
and still resolve the main solution features (in the case of $A$, the low rank SVD 
components are collectively less than $30$ times the size of the full matrix and offer superior compression gains).
\item For a matrix of this size, block matrix techniques we have discussed are likely 
necessary for practical implementation, so that different parts of the matrices used can 
be stored on different machines. Blocking can be applied both to wavelet compression 
via \eqref{eq:block_wavelet_operations} or to the low rank SVD schemes via 
e.g. \eqref{eq:uta_utb_block_projected_system}.
\item In matrix vector operations, the error in the approximation to operations with $A^T A$ is significantly 
less than for the approximations to operations with $A$ and $A^T$. This again has implications for the
 $\chi^2$ calculation as previously discussed. 
\item The solutions with the low rank SVD do show loss of detail when compared to the wavelet 
thresholding solution. There is significantly less loss of detail when Laplacian smoothing is used, since 
the smaller scale features are smoothed out in that case.  
\item A checkerboard test is a good way to measure matrix resolution. 
The smallest clearly resolved checker size corresponds roughly to the scale of 
properly resolved features in the solution. If the resolution 
is poor, Laplacian smoothing should be used to avoid presenting false fine scale details. 
In this case, the low rank SVD solutions can offer a good approximation with the use of much 
smaller matrices.
\end{itemize}

\begin{figure*}[h!]
\centerline{
\includegraphics[scale=0.32]{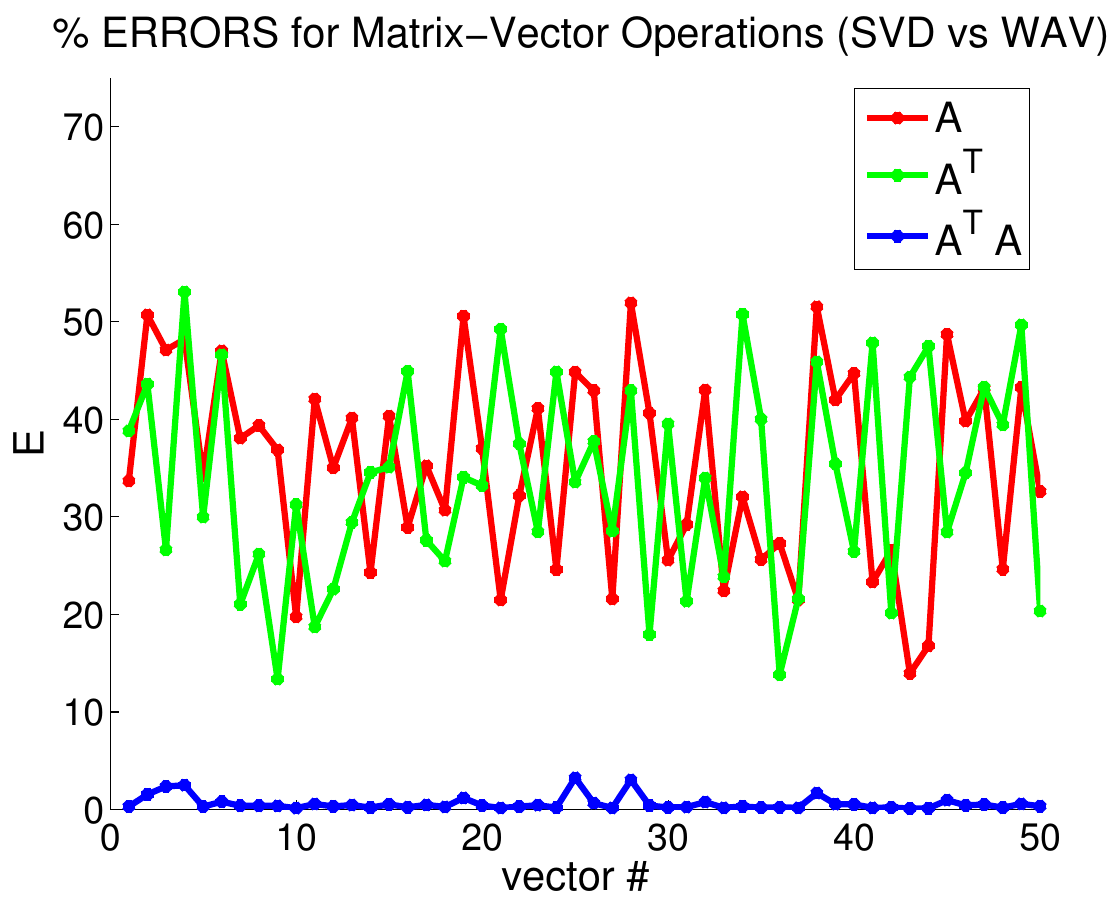}
}
\caption{Percent errors in matrix-vector operations for $50$ Gaussian random vectors 
with $A$, $A^T$, and $A^T A$ approximated via the wavelet compressed 
matrix $M$ and compared to results obtained with the low rank SVD (obtained via $M$).}
\label{fig:wavelet_vs_svd_compression_mat_vec_errors}
\end{figure*}

\newpage

\begin{figure*}[h!]
\centerline{
\includegraphics[scale=0.40]{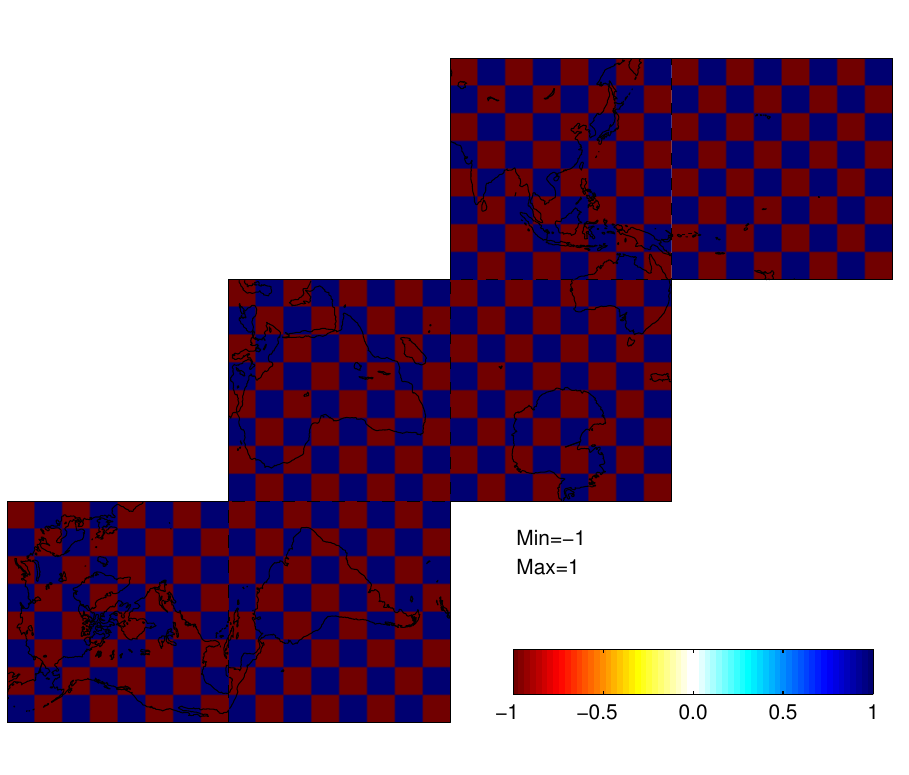}
\includegraphics[scale=0.40]{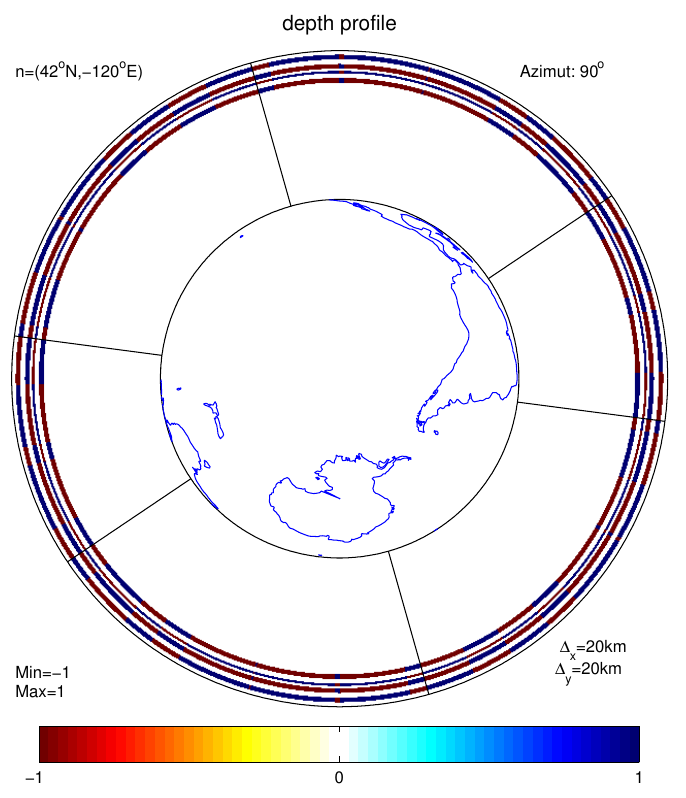}
\includegraphics[scale=0.40]{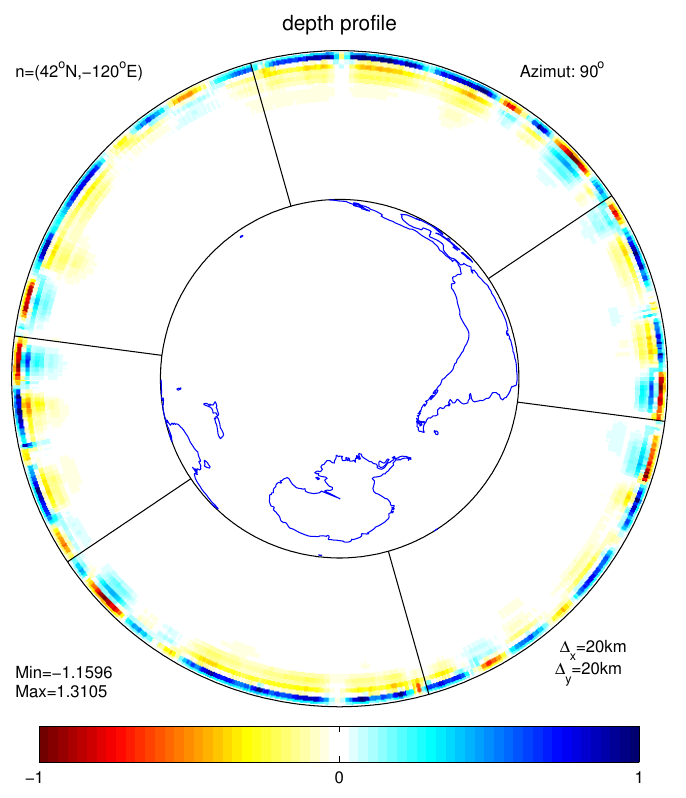}
}
\centerline{
\includegraphics[scale=0.40]{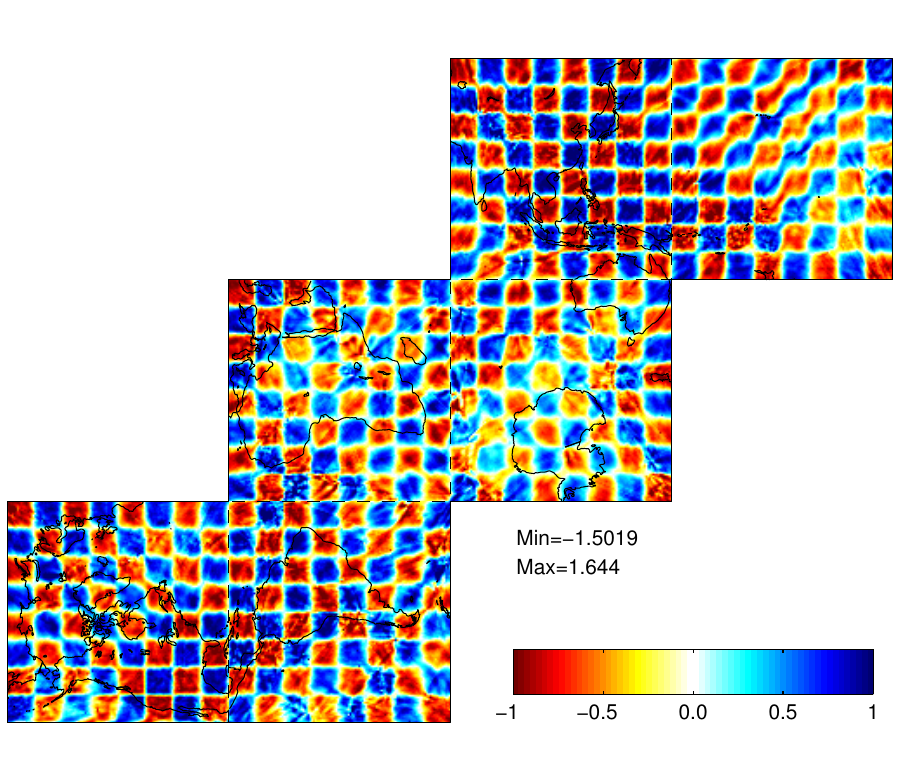}
\includegraphics[scale=0.40]{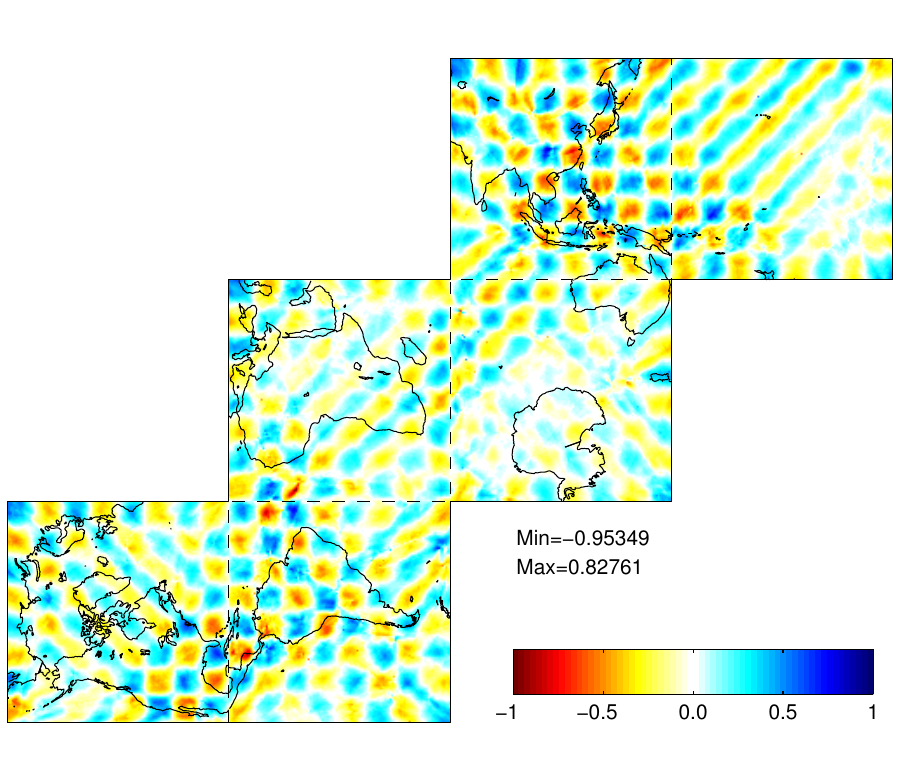}
}
\centerline{
\includegraphics[scale=0.40]{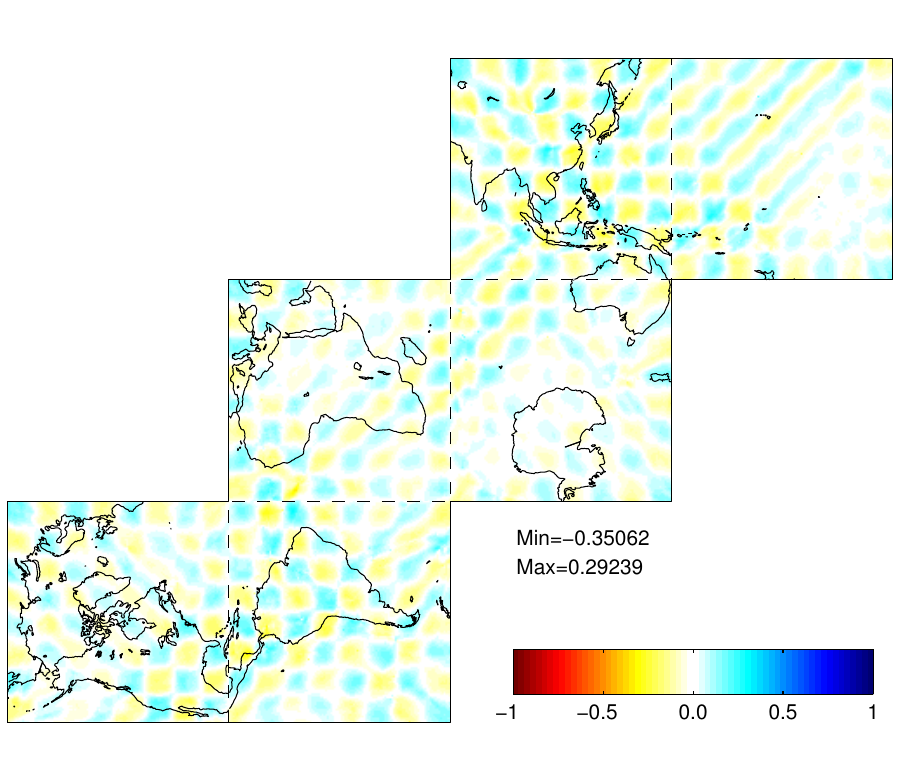}
\includegraphics[scale=0.40]{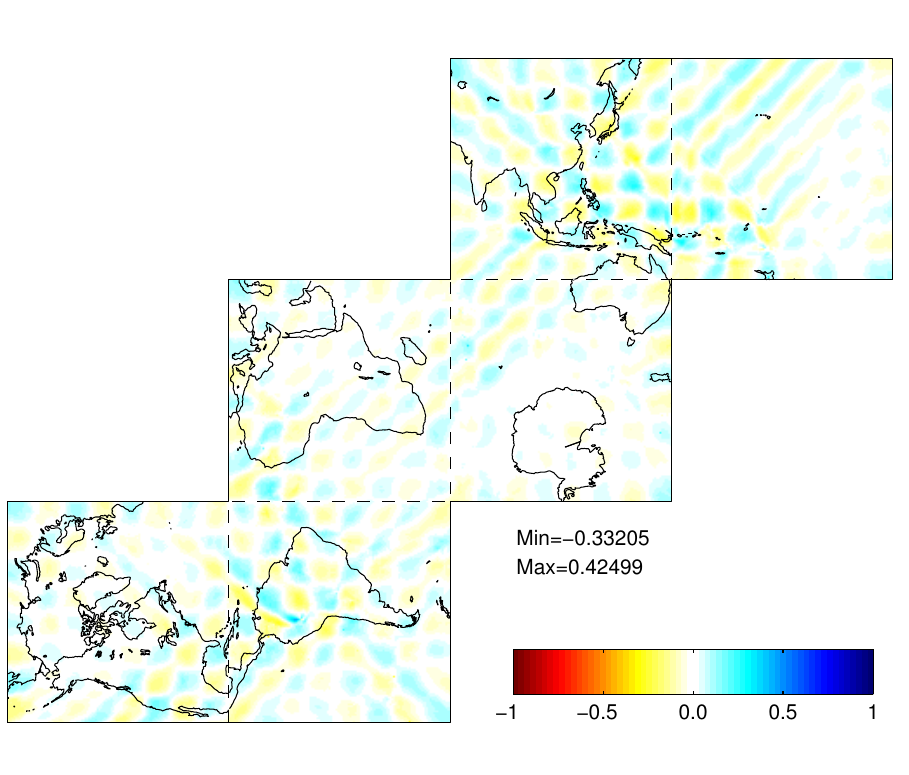}
}
\caption{Checkerboard model and reconstructions at different depths. Row 1: Synthetic $x_{\textit{chk}}$ model and it's depth profile from the surface to the core mantle boundary followed by the depth 
profile of the reconstructed solution $x_{\textit{chkrec}}$. At adjacent  
layers, checkerboards differ only by a sign change. 
Row 2: reconstructed layers $34$ and $32$ ($135$ and $316$ km depth). 
Row 3: reconstructed layers $30$ and $28$ ($428$ and $586$ km depth).}
\label{fig:checkerboard_output}
\end{figure*}

\newpage

\begin{figure*}[ht!]
\centerline{
\includegraphics[scale=0.33]{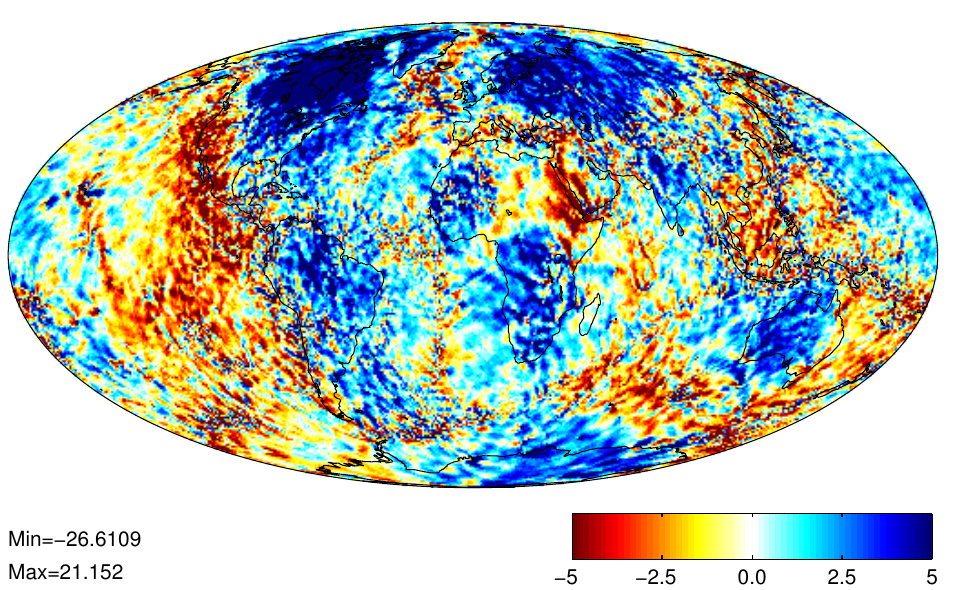}
\includegraphics[scale=0.33]{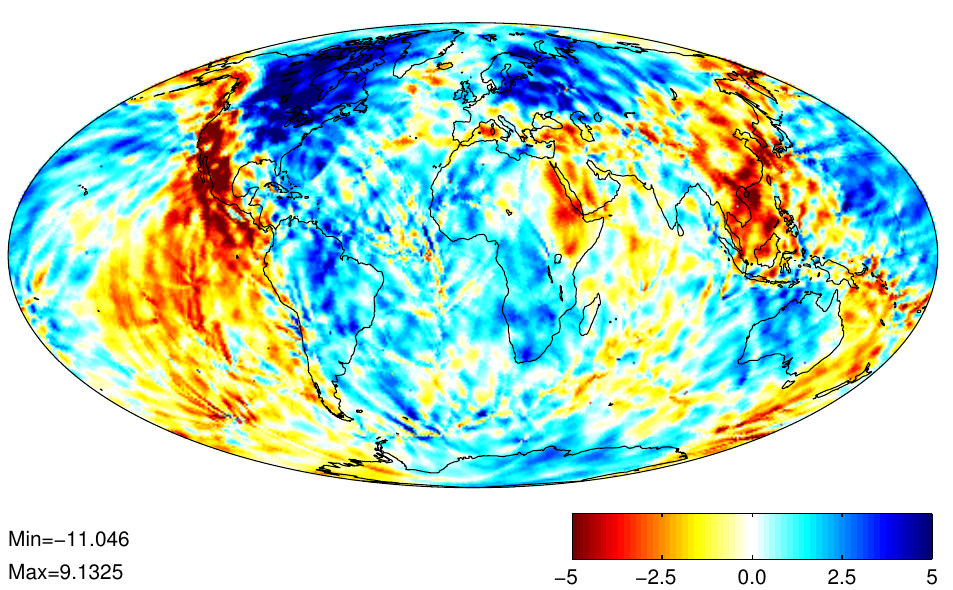}
\includegraphics[scale=0.33]{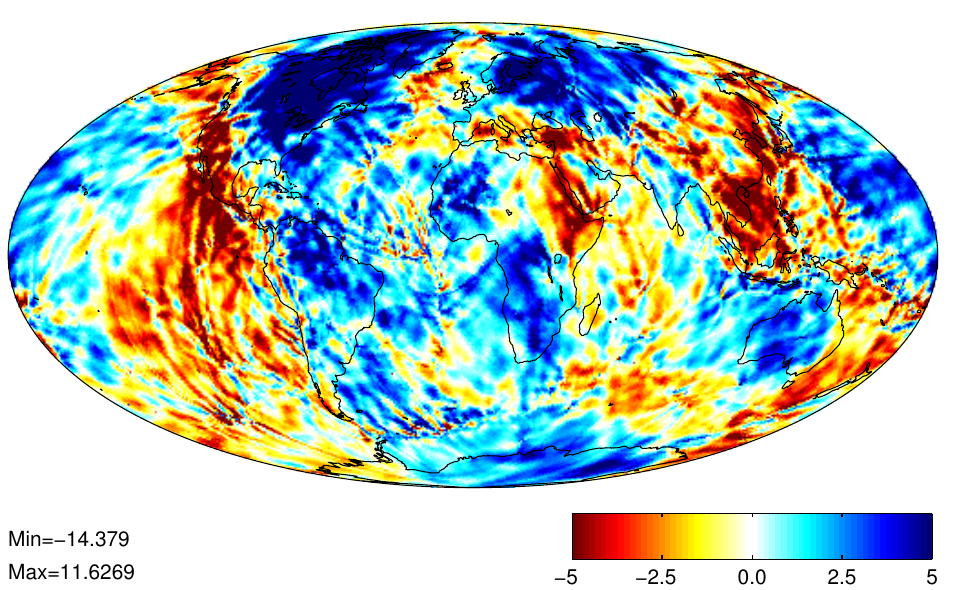}
}
\centerline{
\includegraphics[scale=0.33]{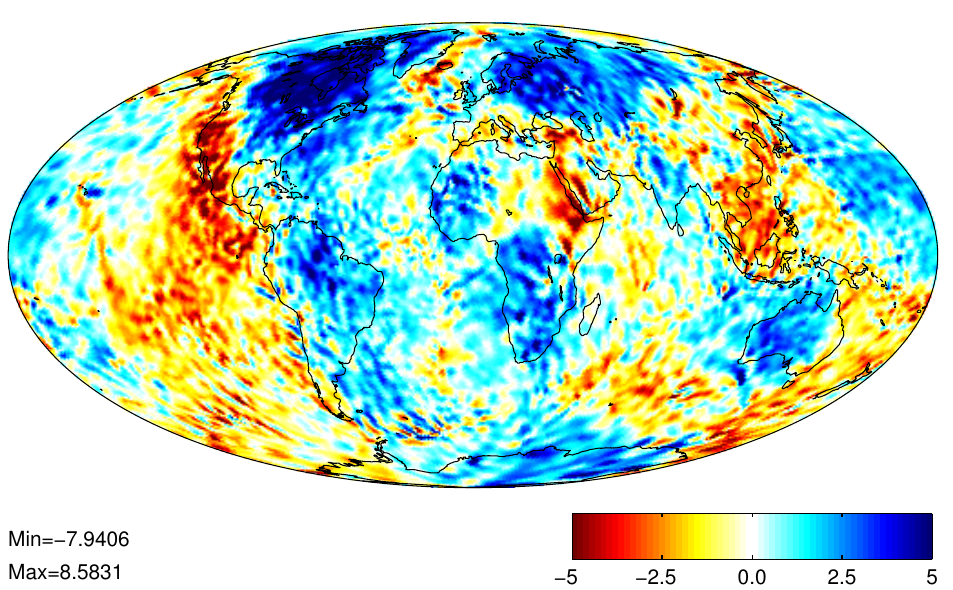}
\includegraphics[scale=0.33]{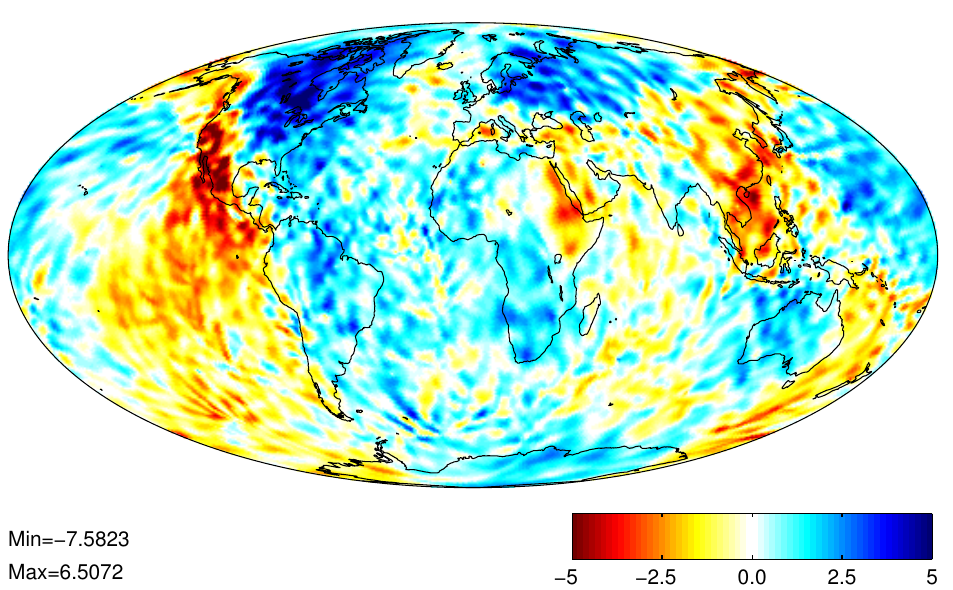}
\includegraphics[scale=0.33]{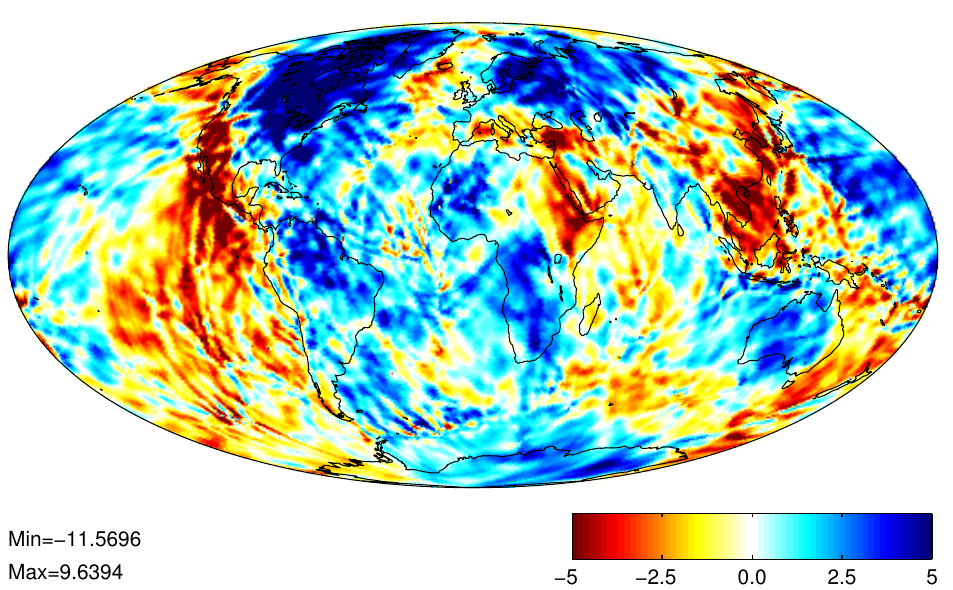}
}
\centerline{
\includegraphics[scale=0.25]{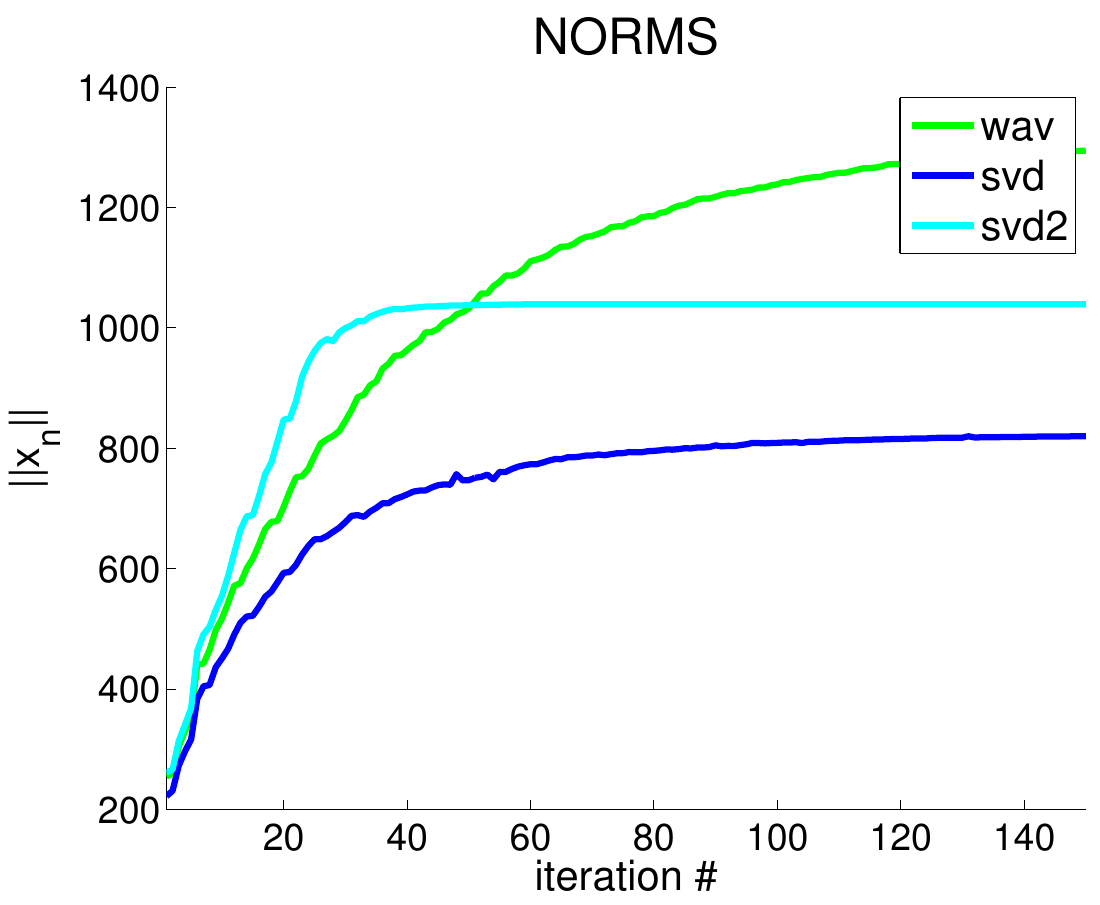}
\includegraphics[scale=0.25]{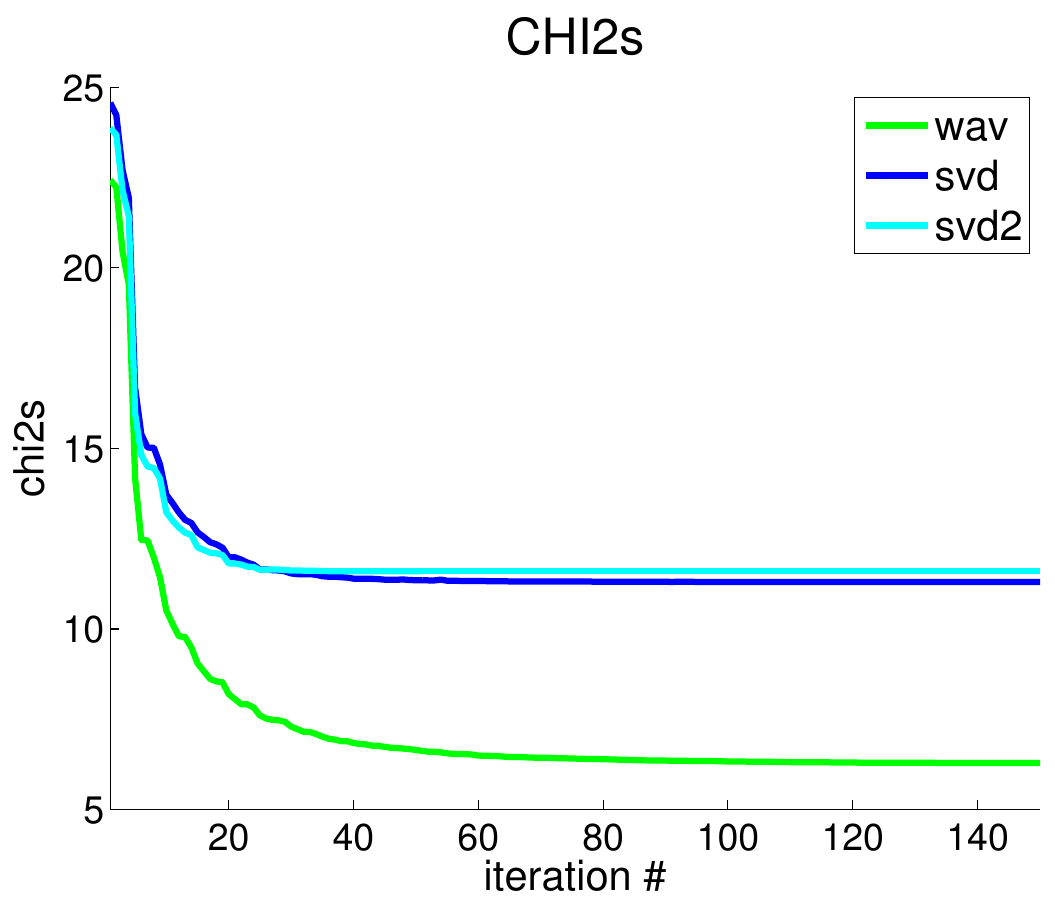}
\includegraphics[scale=0.25]{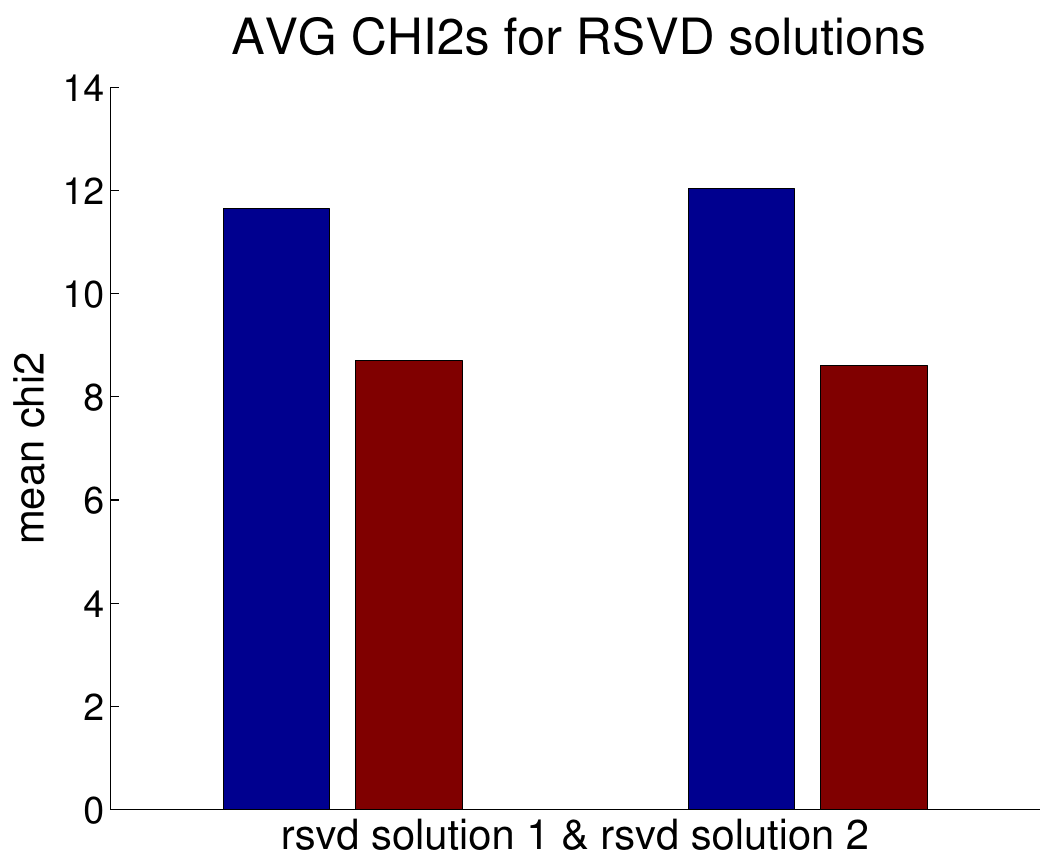}
\includegraphics[scale=0.48]{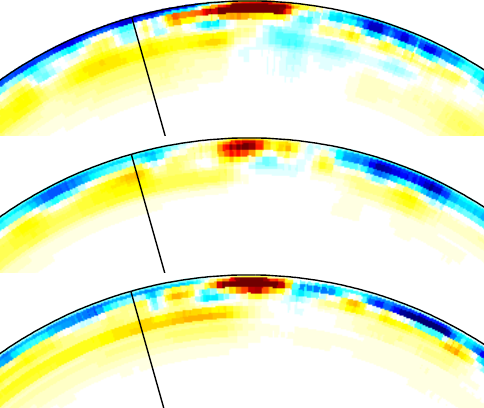}
}
\caption{Plots for regularized solutions $x_5$, $x_6$, $x_7$ (row 1) and $x_8$, 
$x_9$, $x_{10}$ (row 2). First and second row: solutions plotted at $135$ km depth. 
Third row: norms of solution and 
$\chi^2$ value at each iteration, bar plot of average $\chi^2$ of the two SVD 
solutions computed with the low rank SVD matrix and the wavelet compressed matrix, depth 
profiles of solutions $x_8$, $x_9$, and $x_{10}$ in a portion of the globe with variations 
(the top arcs represent the Earth's surface).}
\label{fig:svd_compression_x5_x6_x7}
\end{figure*}

\newpage

\section{Conclusions}
We have presented the use of wavelet compression and low rank SVD techniques for obtaining approximate 
solutions to regularization problems. We illustrate the application 
of these techniques to $\ell_2$ regularization for synthetic data and for a large scale inverse 
problem from seismic tomography, where we show the pros and cons of these approximation methods in a 
practical setting. We have also presented some mathematical analysis for 
the various SVD based schemes we have considered, showing interesting equivalence between 
different schemes with different memory requirements. 
The techniques we present are also well applicable to other types 
of optimization problems. In fact, the methods presented here can be of use to any 
application where matrix-vector operations with large matrices are required, especially if the 
matrices are not well conditioned and have nonlinear decay of singular values. 

The wavelet compressed approach is found to be very accurate and gives 
close reconstructions to the true solution, assuming the data are wavelet compressible. 
Based on our experiments, applications utilizing similar data and wavelet transform can benefit from 
a compression ratio of at least $3$ times, with minimal accuracy loss. In our examples, 
we used a simple one dimensional transform for each row. Recognizing the 
rows as multi-dimensional images and transforming them via a 
multi-dimensional transform would likely give even greater compression.

For large matrices, the compression with wavelets alone 
may not be sufficient. The low rank SVD approach can give significantly 
better compression ratios ($>10$) and resolve the main solution features.  
The low rank SVD can be obtained through an efficient randomized algorithm using 
operations with the smaller wavelet compressed matrix instead of the full matrix, 
so that the two compression techniques we present can be utilized together.
The approaches we discuss lead to the use of $k \times n$ or $k \times k$ 
matrices (which can also be split in several smaller blocks), 
in place of the original $m \times n$ matrix, which can result in 
very substantial compression ratios.  

For both wavelet compressed and low rank SVD based methods, the accuracy and 
compression ratio are inversely proportional and controlled by the user. In the case of 
wavelet compression, the time it takes to form the compressed matrix is nearly independent 
of the threshold used. However, for the computation of the low rank SVD, the work involved 
substantially grows as the rank $k$ increases. Often, 
a checkerboard style test can be performed to see the 
resolution a data set is capable of. In large problems, the resolution possible with a given matrix is 
often limited. The approximation techniques we propose can often be well justified physically, as 
the fine scale details they may remove or smooth out may not be realistically resolved by the 
data set.

\vspace{5.mm}
\section{Acknowledgements}
The authors would like to thank Ignace Loris, Gunnar Martinsson and Frederik Simons 
for very helpful discussion. Support from the 
ERC (Advanced Grant 226837), the Defense Advanced Projects Research Agency 
(contract N66001-13-1-4050) and the National Science Foundation (contracts 1320652 and 0748488) 
is greatly appreciated.

\newpage


\begin{thebibliography}{}

\end{thebibliography}


\begin{thebibliography}{10}

\bibitem{Akansu:1992:MSD:573878}
A.N. Akansu and R.A. Haddad.
\newblock {\em Multiresolution Signal Decomposition: Transforms, Subbands, and
  Wavelets}.
\newblock Academic Press, Inc., Orlando, FL, USA, 1992.

\bibitem{Calvetti2000423}
D.~Calvetti, S.~Morigi, L.~Reichel, and F.~Sgallari.
\newblock Tikhonov regularization and the {$L$}-curve for large discrete
  ill-posed problems.
\newblock {\em J. Comput. Appl. Math.}, 123(1-2):423--446, 2000.
\newblock Numerical analysis 2000, Vol. III. Linear algebra.

\bibitem{Charlety2013}
J.~Ch\'{a}rlety, S.~Voronin, Nolet G., I.~Loris, F.J. Simons, K.~Sigloch, and
  I.C. Daubechies.
\newblock Global seismic tomography with sparsity constraints: Comparison with
  smoothing and damping regularization.
\newblock {\em Journal of Geophysical Research - Solid Earth}, 2013.

\bibitem{CDFWavelets}
A.~Cohen, I.C. Daubechies, and J.-C. Feauveau.
\newblock Biorthogonal bases of compactly supported wavelets.
\newblock {\em Communications on Pure and Applied Mathematics}, 45(5):485--560,
  1992.

\bibitem{DaubechiesWaveletsI}
I.C. Daubechies.
\newblock Orthonormal bases of compactly supported wavelets.
\newblock {\em Communications in Pure and Applied Mathematics}, 41:909--996,
  1988.

\bibitem{ingrid_thresholding1}
I.C. Daubechies, M.~Defrise, and C.~De~Mol.
\newblock An iterative thresholding algorithm for linear inverse problems with
  a sparsity constraint.
\newblock {\em Communications on Pure and Applied Mathematics},
  57(11):1413--1457, 2004.

\bibitem{debayle2004}
E.~Debayle and M.~Sambridge.
\newblock {Inversion of massive surface wave data sets: Model construction and
  resolution assessment}.
\newblock {\em Journal of Geophysical Research}, 109:B02316, 2004.

\bibitem{Halko:2011:FSR:2078879.2078881}
N.~Halko, P.G. Martinsson, and J.A. Tropp.
\newblock Finding structure with randomness: Probabilistic algorithms for
  constructing approximate matrix decompositions.
\newblock {\em SIAM Rev.}, 53(2):217--288, May 2011.

\bibitem{har-etal:wavelets}
W.~H{\"a}rdle, G.~Kerkyacharian, D.~Picard, and A.~Tsybokov.
\newblock {\em Wavelets, Approximation, and Statistical Applications}, volume
  129 of {\em Lecture Notes in Statistics}.
\newblock Springer-Verlag, New York, 1998.

\bibitem{Lampe20122845}
J.~Lampe, L.~Reichel, and H.~Voss.
\newblock Large-scale tikhonov regularization via reduction by orthogonal
  projection.
\newblock {\em Linear Algebra and its Applications}, 436(8):2845 -- 2865, 2012.
\newblock Special Issue dedicated to Danny Sorensen's 65th birthday.

\bibitem{MarkovskyLowRank}
I.~Markovsky.
\newblock {\em Low Rank Approximation: Algorithms, Implementation,
  Applications}.
\newblock Communications and Control Engineering. Springer, 2012.

\bibitem{GJI:GJI426}
H.~Marquering, G.~Nolet, and F.A. Dahlen.
\newblock Three-dimensional waveform sensitivity kernels.
\newblock {\em Geophysical Journal International}, 132(3):521--534, 1998.

\bibitem{YvesMeyerWaveletsandAlgs}
Y.~Meyer.
\newblock {\em Wavelets: Algorithms \& Applications}.
\newblock Society for Industrial and Applied Mathematics, Philadelphia, 1993.
\newblock Translated and revised by Robert D. Ryan.

\bibitem{nolet08}
G.~Nolet.
\newblock {\em A Breviary of Seismic Tomography}.
\newblock Cambridge Univ. Press, Cambridge, U.K., 2008.

\bibitem{Paige:1982:LAS:355984.355989}
C.C. Paige and M.A. Saunders.
\newblock Lsqr: An algorithm for sparse linear equations and sparse least
  squares.
\newblock {\em ACM Trans. Math. Softw.}, 8(1):43--71, March 1982.

\bibitem{Ronchi199693}
C.~Ronchi, R.~Iacono, and P.S. Paolucci.
\newblock The “cubed sphere”: A new method for the solution of partial
  differential equations in spherical geometry.
\newblock {\em Journal of Computational Physics}, 124(1):93 -- 114, 1996.

\bibitem{Simons.Loris.ea2011}
F.J. Simons, I.~Loris, G.~Nolet, I.C. Daubechies, S.~Voronin, J.~S. Judd, P.A.
  Vetter, J.~Ch\'{a}rlety, and C.~Vonesch.
\newblock Solving or resolving global tomographic models with spherical
  wavelets, and the scale and sparsity of seismic heterogeneity.
\newblock {\em Geophysical Journal International}, 187(2):969--988, 2011.

\bibitem{swe:spie95}
W.~Sweldens.
\newblock The lifting scheme: {A} new philosophy in biorthogonal wavelet
  constructions.
\newblock In Andrew~F. Laine, Michael~A. Unser, and Mladen~V. Wickerhauser,
  editors, {\em Wavelet applications in signal and image processing {III}},
  volume 2569 of {\em Proceedings of SPIE}, pages 68--79, 1995.

\bibitem{Tikhonov63}
A.N. Tikhonov.
\newblock Solution of incorrectly formulated problems and the regularization
  method.
\newblock {\em Soviet Math. Dokl}, 1963.

\bibitem{trefethen97}
Lloyd~N. Trefethen and David Bau, III.
\newblock {\em Numerical linear algebra}.
\newblock Society for Industrial and Applied Mathematics (SIAM), Philadelphia,
  PA, 1997.

\bibitem{vanheijst99}
H.-J. van Heijst and J.H. Woodhouse.
\newblock Global high-resolution phase velocity distributions of overtone and
  fundamental mode surface waves determined by mode branch stripping.
\newblock {\em Geophysical Journal International}, 137(3):601--620, 1999.

\bibitem{2015arXiv150205366V}
S.~{Voronin} and P.-G. {Martinsson}.
\newblock {RSVDPACK: Subroutines for computing partial singular value
  decompositions via randomized sampling on single core, multi core, and GPU
  architectures}.
\newblock {\em ArXiv e-prints}, February 2015.

\bibitem{ImprovingCURMatrixDecomps}
S.~Wang and Z.~Zhang.
\newblock Improving cur matrix decomposition and the nystrom approximation via
  adaptive sampling.
\newblock {\em J. Mach. Learn. Res.}, 14(1):2729--2769, January 2013.

\bibitem{Woodbury50}
M.A. Woodbury.
\newblock {\em Inverting modified matrices}.
\newblock Statistical Research Group, Memo. Rep. no. 42. Princeton University,
  Princeton, N. J., 1950.

\end{thebibliography}
\end{document}